\newtheorem{theorem}{Theorem}[section]
\newtheorem{cor}[theorem]{Corollary}
\newtheorem{lemma}[theorem]{Lemma}                                                                                                                                                                                                                                                                             
\newtheorem{definition}{Definition}
\newtheorem{remark}{Remark}
\def\ep{\epsilon}
\newcommand{\Bp}{\mathbf{p}}
\newcommand{\Bc}{\mathbf{c}}
\newcommand{\Om}{\Omega}
\newcommand{\Bs}{\mathbf{s}}
\newcommand{\Bx}{\mathbf{x}}
\newcommand{\RR}{\mathbb{R}}
\newcommand{\NN}{\mathbb{N}}
\newcommand{\Scal}{\mathcal{S}}
\newcommand{\quadand}{\quad\mbox{and}\quad}
\newcommand{\quadwith}{\quad\mbox{with}\quad}
\newcommand{\p}{\partial}
\newcommand{\pd}[2]{\frac {\p #1}{\p #2}}
\newcommand{\ds}{\displaystyle}
\newcommand{\eqnref}[1]{(\ref {#1})}
\newcommand{\beq}{\begin{equation}}
\newcommand{\eeq}{\end{equation}}
\newcommand{\RN}[1]{%
  \textup{\uppercase\expandafter{\romannumeral#1}}%
}
\numberwithin{equation}{section}
\numberwithin{figure}{section}
\begin{document}

\newcommand{\TheTitle}{Characterization of stress concentration in two-dimensional boundary value problems: Neumann-type and Dirichlet-type}

\newcommand{\TheAuthors}{J. Hong and M. Lim}

\title{{\TheTitle}\thanks{{This work is supported by the Korean Ministry of Science, ICT and Future Planning through NRF grant No. 2016R1A2B4014530.}}}
\author{
Jiho Hong\thanks{\footnotesize Department of Mathematical Sciences, Korea Advanced Institute of Science and Technology, Daejeon 34141, Korea ({jihohong@kaist.ac.kr}, {mklim@kaist.ac.kr}).}\footnotemark[2] \and Mikyoung Lim\footnotemark[2]}

\maketitle
%
%
%
%
%
%
%

\begin{abstract}
We consider a boundary value problem for the conductivity equation in a bounded domain containing an inclusion which is nearly touching to the domain's boundary. We assume that the domain and the inclusion are disks with conductivity jump on the boundary of the inclusion. By using the layer potential technique and adopting the bipolar coordinates, we derive the asymptotic formulas which explicitly describe the gradient blow-up of the solution as the distance between the inclusion and the domain's boundary tends to zero. It turns out that the gradient blow-up term can be identified with the electric field generated by certain kind of virtual line charges supported on line segments outside of the domain; thereby, the gradient blow-up is completely characterized in terms of both of Neumann-type and Dirichlet-type boundary conditions, conductivities and geometric parameters.

%
\end{abstract}


\noindent{\footnotesize {\bf Key words}. Stress concentration, Asymptotic analysis, Boundary value problem, Bipolar coordinates, Anti-plane elasticity, Image charge}

%
\section{Introduction}

The stress concentration in composite materials has gotten attention from the researchers in many fields of interest \cite{KLY:2014:CEFCT, BLY:2010:GEPICP, RS09, AKLLL:2007:OEEFT, LV:2000:GESDF} due to its practical applications. For example \cite{G:1984:TEPL}, if an air gap is subjected to a high voltage above a threshold, a plasmonic phenomenon called ``corona discharge'' takes place. The corona discharge has negative effects on the electric power transmission: power loss, ozone production, noise and damage to insulation. On the other hand, the ionization of fluid due to the corona discharge is applied to useful corona devices: electrostatic precipitator, photocopier, nitrogen lazer and ionic wind devices. Hence, starting from \cite[Morrow]{M:1997:TPGC}, there has been attempts \cite{KP:2004:SCDPPC, F:1999:AGFEM} to numerically simulate the phenomenon.
%

Unfortunately, when a composite material has eccentric geometry and steep jumps in material properties, existing numerical solvers for the corresponding problem demand a high cost. Instead, mathematical characterization of the stress concentration can provide intuition about controlling the phenomenon to achieve practical values. As for the electric field concentration, mathematicians have found explicit formulas \cite{AKL,LYu3D,LY:2015:ASCE} that approximate the solutions based on the theory of layer potentials. Recently, a study of anti-plane elasticity \cite{imageCharge2018} classified the blow-up behavior of stress in the free space $\mathbb{R}^2$ under core-shell structure.

Another approach using variational principles has been partially successful in linear elasticity. The upper bound of the increasing rate of stress was obtained by Bao et al, as the distance $\ep$ between two convex inclusions gets smaller. The rate turned out to be $O(\ep^{-1/2})$ in the planar case \cite{bao2015gradient}, and $O(|\ep\log\ep|^{-1})$ in three dimensions \cite{bao2017gradient}, respectively. Bao et al \cite{bao2017optimal} also estimated the stress concentration in all dimensions with the core-shell type geometry of elastic materials. However, so far, there has been no research about finding explicit asymptotic formula for accumulated stress in Lam{\'e} systems with core-shell geometry under generally given incident field.

In this article, we investigate the gradient blow-up of the solution to the two types of boundary value problems of anti-plane elasticity: Neumann-type and Dirichlet-type. The domain under consideration contains an inclusion with material parameters different from those of the background. We assume that the domain and the inclusion are bounded by circles. In other words, we seek for the solutions $u$ and $v$ to
\begin{equation}\label{e0}
\begin{cases}
\ds\Delta u=0\quad&\mbox{in } D\cup \left(\Om\backslash{\overline{D}}\right),\\
\ds u|_+=u|_- \quad &\mbox{on }\partial D,\\[1mm]
\ds\frac{\partial u}{\partial \nu}\Big|_+=k\frac{\partial u}{\partial \nu}\Big|_- \quad &\mbox{on }\partial D,\\[2mm]
\ds\frac{\partial u}{\partial \nu}\Big|_-=g \quad &\mbox{on }\partial \Om,
\end{cases}\quad\mbox{and}\quad
\begin{cases}
\ds\Delta v=0\quad&\mbox{in } D\cup \left(\Om\backslash{\overline{D}}\right),\\
\ds v|_+=v|_- \quad &\mbox{on }\partial D,\\[1mm]
\ds\frac{\partial v}{\partial \nu}\Big|_+=k\frac{\partial v}{\partial \nu}\Big|_- \quad &\mbox{on }\partial D,\\[2mm]
\ds v|_- =g_d \quad &\mbox{on }\partial \Om,
\end{cases}
\end{equation}
where $\Om$ and $D$ are non-concentric disks such that $\overline{D}\subset \Om$, $k$ is a positive real number, and $\nu$ is the outward unit normal vector to each circle. Both of $g$ and $g_d$ are mean-zero functions on $\p \Om$ with some requirements in regularity, which will be discussed later. Temporarily assuming that $g$ and $g_d$ are continuous, Lax-Milgram theorem verifies that \eqnref{e0} are weakly solvable and the weak solutions in the Sobolev space $W^{1,2}(\Omega)$ are unique. The uniqueness of $u$ is up to additive constant.

Physically, $k$ denotes the ratio of the conductivity in $D$ to the conductivity in $\Om\setminus\overline{D}$. Also, we let $\ep$ denote the distance between the inclusion $D$ and $\p \Om$. The main purpose of this article is to analyze the blow-up behavior of $|\nabla u|$ and $|\nabla v|$ as $\ep$ tends to zero, where we fix the exterior domain $\Om$ but translate $D$ along the line passing through the centers of $\Om$ and $D$. The full characterization of the blow-up phenomenon in terms of $\ep$, $k$, $g$, $g_d$ and the radii of the two disks is provided in Theorem \ref{thm:imgCharge} and Theorem \ref{thm:imgCharge:u}. For the operators $\mathcal{S}_{\p \Om}$ and $\mathcal{D}_{\p \Om}$ defined in \eqnref{def:scal} and \eqnref{def:dcal}, it turns out that the harmonic functions
\beq\label{def:H}H=-2\mathcal{S}_{\p \Om}[g]\quad\mbox{and}\quad H_d=2\mathcal{D}_{\p\Om}[g_d]\quad\mbox{in }\Om,\eeq
which are the solutions to
\beq\label{eq:solH:woD}\begin{cases}
\ds\Delta H=0\quad&\mbox{in } \Om,\\[1mm]
\ds\frac{\partial H}{\partial \nu}\Big|_-=g \quad &\mbox{on }\partial \Om
\end{cases}\quad\mbox{and}\quad\begin{cases}
\ds\Delta H_d=0\quad&\mbox{in } \Om,\\[1mm]
\ds H_d=g_d \quad &\mbox{on }\partial \Om
\end{cases}\eeq
are used significantly in the characterization.

The methodology we take is analogous to that of \cite{imageCharge2018}, which provides a thorough analysis of blow-up feature of the gradient with the same type of interface conditions in the core-shell type geometry.
There, the domain is the free space and the far-field behavior of the solution is given to be an entire function.
The main observation in that paper is that only the two real numbers, which are extracted linearly from the incident far-field, contribute to the gradient under the approximation of $O(1)$, which is uniform in $\ep$ and $k$.
We show in this article that an analogous argument is valid for the solutions to \eqnref{e0} under suitable assumptions for the regularity of $g$ and $g_d$. Although the method of approximation is the same for both problems in \eqnref{e0}, we proved that the conditions for $\|\nabla u\|_{\infty}$ and $\|\nabla v\|_\infty$ to blow up are complementary to each other in some sense; see Table \ref{tab:CBUR} for the details.

\begin{table}[h]

\centering 
\begin{tabular}{l | c c c c}
\hline\\[0.5ex] 
$w$;\hfill $(C_1,C_2)$ & $\|\nabla w\cdot\mathbf{e}_\xi \|_{L^\infty(D)}$ & $\|\nabla w\cdot\mathbf{e}_\theta \|_{L^\infty(D)}$ &$\|\nabla w\cdot\mathbf{e}_\xi \|_{L^\infty(\Om\backslash\overline{D})}$ & $\|\nabla w\cdot\mathbf{e}_\theta \|_{L^\infty(\Om\backslash\overline{D})}$ 
\\ [0.5ex]\hline\hline\\ 
$=u$;  \hfill $\ne(0,0)$ & $\ds O\left(\frac{1}{k+\sqrt{\ep}}\right)$ & $\ds O\left(\frac{1}{k+\sqrt{\ep}}\right)$ & $O(1)$ & $\ds O\left(\frac{1}{k+\sqrt{\ep}}\right)$
\\[0.5ex]\hline\\

$=v$;\hfill $\ne(0,0)$ & $O(1)$ & $O(1)$ & $\ds O\left(\frac{1}{\frac{1}{k}+\sqrt{\ep}}\right)$ & $O(1)$

\\[0.5ex]\hline\\

$=u, v$;$\quad$ $=(0,0)$ & $O(1)$ & $O(1)$ & $O(1)$ & $O(1)$
\\[0.5ex]\hline\hline
\end{tabular}
\caption{\label{tab:CBUR}The above classification of blow-up rate is the main goal of this article. The parameters $(C_1,C_2)$ will be defined as a pair of real numbers extracted from $g$ or $g_d$; see Equations \eqnref{def:c1c2} and \eqnref{def:c3c4}. Also, $\{\mathbf{e}_\xi,\mathbf{e}_\theta\}$ is an orthonormal basis of $\mathbb{R}^2$ to be defined later; see Equations \eqnref{def:exet}.} 
\end{table}

%

The remainder is organized as follows. In Section 2, a coordinate system is introduced, which well describes the domain's geometry. Next, in Section 3, the solutions $u$ and $v$ are decomposed into two parts, only one of which contributing to the stress concentration. Finally, in Section 4, we derive the asymptotic formulas that explicitly show how each condition of the problem is related to the gradient blow-up. For each of Sections 3 and 4, the procedure is skipped for one of $u$ and $v$ due to the similarity.

\section{Geometry of the domain}\label{sec:shifted}
\subsection{Translation by $\ep$-dependent distance}
We may assume without loss of generality that 
\beq\label{def:domains}\Om=\left\{\Bx\in\RR^2:|\Bx-(r_e,0)|<r_e\right\}\quadand D=\left\{\Bx\in\RR^2:|\Bx-(r_i+\ep,0)|<r_i\right\}\eeq
for some $r_i,r_e>0$ such that $0<r_i<r_e$. Note that $\Om$ is located independently of $\ep$.

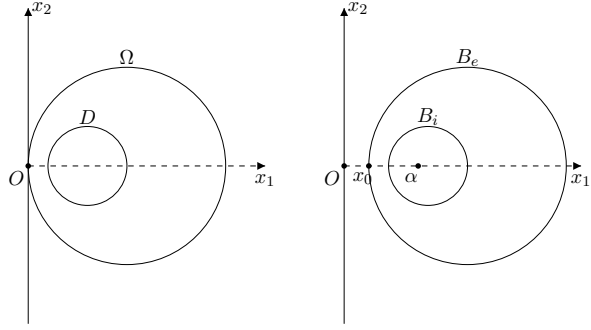
\begin{figure}[h!]
\begin{center}
\scalebox{0.7}{
\begin{tikzpicture}[scale=0.75]
\draw (3.125, 0) circle (2.5);
\draw (2.125, 0) circle (1);
\draw (0, -4.0) -- (0, 4.0);
\fill (0, 4) -- (-0.1, 3.8) -- (0.1, 3.8);

\draw[dashed] (0, 0) -- (6.0, 0);
\fill (6,0) -- (5.8, 0.1) -- (5.8, -0.1);

\draw (0.35, 4) node {$x_2$};
\draw (6, -0.35) node {$x_1$};

\draw (1.7, -0.3) node {$\alpha$};
\draw (0.475, -0.3) node {$x_0$};
\draw (3.125, 2.75) node {$B_e$};
\draw (2.125, 1.25) node {$B_i$};

\draw (-0.3, -0.3) node {$O$};
\fill (1.875, 0) circle (0.07);
\fill (0, 0) circle (0.07);
\fill (0.625, 0) circle (0.07);

\draw (-5.5, 0) circle (2.5);
\draw (-6.5, 0) circle (1);
\draw (-8, -4.0) -- (-8, 4.0);
\fill (-8, 4) -- (-8.1, 3.8) -- (-7.9, 3.8);

\draw[dashed] (-8, 0) -- (-2, 0);
\fill (-2,0) -- (-2.2, 0.1) -- (-2.2, -0.1);

\draw (-7.65, 4) node {$x_2$};
\draw (-2, -0.35) node {$x_1$};

\draw (-5.5, 2.75) node {$\Omega$};
\draw (-6.5, 1.25) node {$D$};

\draw (-8.3, -0.3) node {$O$};
\fill (-8, 0) circle (0.07);

\end{tikzpicture}
}
\end{center}
\caption{\label{fig:trans}Translation by the distance $x_0=O(\epsilon)$ from the left to the right}
\end{figure}


We then translate $\Om$ and $D$ as in Figure \ref{fig:trans} to obtain the auxiliary disks $B_e$ and $B_i$:
\beq\label{def:bebi}B_e = (x_0,0)+\Om\quadand B_i=(x_0,0)+D\eeq
 with
\begin{equation}\label{def:Ce:Ci}
c_i=\frac{r_e^2-r_i^2-(r_e-r_i-\ep)^2}{2(r_e-r_i-\ep)},\quad c_e=c_i+r_e-r_i-\ep\quad\mbox{and}\quad x_0=c_e-r_e.\end{equation}
In other words, $B_e$ and $B_i$ are the two open disks of radius $r_i$ and $r_e$ centered at $(c_i,0)$ and $(c_e,0)$. The motivation for the translation can be found in Subsection \ref{section:bipolar}.

\subsection{Bipolar coordinates}\label{section:bipolar}

\begin{figure}[h!]
\begin{center}
\scalebox{0.8}{
\begin{tikzpicture}[scale=0.75]
\draw (3.125, 0) circle (2.5);
\draw (2.125, 0) circle (1);
\draw (-3.125, 0) circle (2.5);
\draw (-2.125, 0) circle (1);
\draw (0, -5.0) -- (0, 5.0);

\draw[dashed] (0, 0) circle (1.875);
\draw[dashed] (0, 1.875) circle (2.65165042945);
\draw[dashed] (0, -1.875) circle (2.65165042945);
\draw[dashed] (-6.0, 0) -- (6.0, 0);

\draw (1.7, -0.3) node {$\mathbf{p}_2$};
\draw (-1.7, -0.3) node {$\mathbf{p}_1$};
\draw (-0.3, -0.3) node {$O$};
\draw (-6.5, 4.0) node {${z}$-plane:};
\draw (-5.3, 3.2) -- (-5.3, 4.7);
\draw (-5.3, 3.2) -- (-3.8, 3.2);
\draw (-4.0, 3.5) node {$\Re$};
\draw (-5.0, 4.5) node {$\Im$};
\fill (-5.3, 4.7) -- (-5.4, 4.5) -- (-5.2, 4.5);
\fill (-3.8, 3.2) -- (-4.0, 3.1) -- (-4.0, 3.3);
\fill (1.875, 0) circle (0.07);
\fill (-1.875, 0) circle (0.07);
\fill (0, 0) circle (0.07);

\draw (-3.6, 3.0) -- (-3.6, 5.0);
\draw (-7.5, 5.0) -- (-3.6, 5.0);
\draw (-7.5, 5.0) -- (-7.5, 3.0);
\draw (-3.6, 3.0) -- (-7.5, 3.0);

\end{tikzpicture}
}
\end{center}
\caption{\label{fig:coord}$\xi$-level curves (filled) and $\theta$-level curves (dashed) of the bipolar coordinate system}
\end{figure}
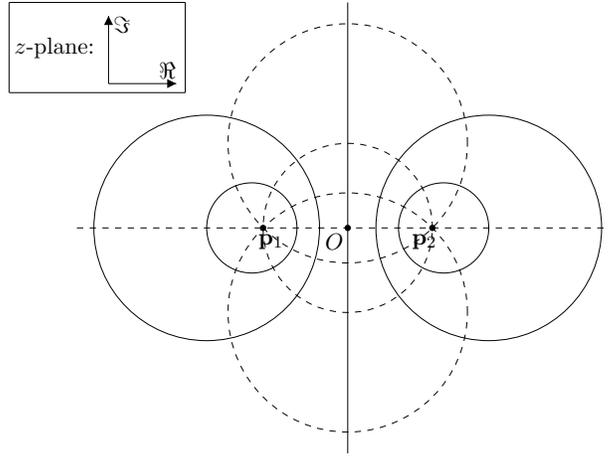
We will set the bipolar coordinate system $(\xi, \theta) \in \mathbb{R} \times (-\pi, \pi]$ by locating the two poles at $\Bp_1=(-\alpha,0)$ and $\Bp_2=(\alpha,0)$, where the constant $\alpha>0$ will be given later. Namely, any point $\Bx=(x_1,x_2)$ in the Cartesian coordinates corresponds to the bipolar coordinates $(\xi,\theta)$ by
	\beq\label{def:bipolar}
	e^{\xi+\rm{i}\theta}=\frac{\alpha+z}{\alpha-z}\quad\mbox{with } z=x_1+{\rm{i}}x_2.\eeq
	In other words, it holds that
	\begin{align}\label{bipolar:z}
	z&=\alpha \frac{e^{\xi+{\rm i}\theta}-1}{e^{\xi+{\rm i}\theta}+1}= \alpha \frac{\sinh \xi}{\cosh \xi + \cos \theta}+{\rm i} \alpha \frac{\sin \theta}{\cosh \xi + \cos \theta}.
	\end{align}
	We will write either $\Bx=\Bx(\xi,\theta)$ or $z=z(\xi,\theta)$ to represent the points in the complex plane, if necessary. The coordinate curves of the bipolar coordinate system are circles or parts of lines; see Figure \ref{fig:coord}.
%

Next, we make a detour to define the reflection across a circle of radius $r$ and center $\Bc$ by
$$\Bx\mapsto \Bc+\frac{r^2(\Bx-\Bc)}{|\Bx-\Bc|^2}.$$
In particular, we consider the reflection $R$ across the level curve $\{\xi=\xi_0\}$ for any $\xi_0\ne0$; one can deduce from \eqnref{bipolar:z} that the level curve is a circle. The proof of \eqnref{reflection:bipolar} is in \cite[Lemma 2.1]{imageCharge2018}.
\beq\label{reflection:bipolar}
R\left(\Bx(\xi, \theta)\right) = \Bx(2\xi_0 - \xi,\, \theta)\quad\mbox{for all }(\xi,\theta)\neq(2\xi_0,\pi).
\eeq

\smallskip

For example, we set $R_e$ and $R_i$ to be the reflections with respect to $\p B_e$ and $\p B_i$, respectively. Then the combined function $R_e\circ R_i$ has two fixed points $(\pm\alpha,0)$ with
\beq\label{def:alpha}
\alpha=\frac{\sqrt{\epsilon(2r_i+\epsilon)(2r_e-\epsilon)(2r_e-2r_i-\epsilon)}}{2(r_e-r_i-\epsilon)}.
\eeq
With $\alpha$ set as in \eqnref{def:alpha}, the two circles $\p B_e$ and $\p B_i$ are $\xi$-level curves of \eqnref{def:bipolar} with $\xi$-values 
\beq\label{def:xixe}\xi_e=\frac{1}{2}\ln\left(\frac{c_e+\alpha}{c_e-\alpha}\right)\quadand \xi_i=\frac{1}{2}\ln\left(\frac{c_i+\alpha}{c_i-\alpha}\right),\eeq
respectively.

Finally, note that $\xi$ and $\theta$ in \eqnref{def:bipolar} defines an orthogonal coordinate system in $\mathbb{R}^2\backslash\{\mathbf{p}_1,\mathbf{p}_2\}$. We can then consider the scale factor $h(\xi,\theta)$ defined by
\beq\label{eqn:h}
\left|\frac{\partial z}{\partial \xi}\right|=\left|\frac{\partial z}{\partial \theta}\right|=\frac{1}{h(\xi,\theta)} 
\quadwith
h(\xi,\theta)=\frac{\cosh\xi+\cos\theta}{\alpha}.
\eeq
For any constant $\xi_0\ne0$, we let $\nu$ be the outward unit normal vector to the circle $\{\xi=\xi_0\}$ 
and rotate $\nu$ by $\frac{\pi}{2}$-radian to get $T$. Then the gradient along the bipolar coordinate curves are simply
\begin{align}
\frac{\p }{\p \nu}\bigg|_{\Bx(\xi_0,\theta)}\label{N:bipolar}
&=-\operatorname{sgn}(\xi_0)h(\xi_0,\theta)\frac{\p }{\partial\xi}\bigg|_{\Bx(\xi_0,\theta)}\quadand\\
\frac{\p }{\p T}\bigg|_{\Bx(\xi_0,\theta)} \label{T:bipolar}
&= -\mbox{sgn} (\xi_0) h(\xi_0,\theta) \frac{\p }{\p \theta}\bigg|_{\Bx(\xi_0,\theta)}.
\end{align}
	

\section{Formulation by layer potentials}\label{sec:formulation}
\subsection{Representation of $u$ in terms of single-layer potentials}

For a Lipschitz domain $U$, we define the single layer potential for $\phi\in L^2(\p U)$ as
\beq\label{def:scal}\Scal_{\p U}[\phi](\Bx)=\frac{1}{2\pi}\int_{\p U}\ln|\mathbf{x}-\zeta|\phi(\zeta)d\sigma(\zeta).\eeq
In addition, we denote the collection of square-integrable mean-zero functions on $\p U$ by $L^2_0(\p U)$. In particular, let $B$ be a disk. Then, we refer the readers to \cite{book} that for each $\phi\in L_0^2(\p B;\mathbb{R})$, $$\ds\lim_{t\to0}\mathcal{S}_{\p B}[\phi](\Bx+t\nu(\Bx))\quad\mbox{converges for a.e. }\Bx\in\p B$$ and
\beq\label{eq:jump:nd}\nu(\Bx)\cdot\lim_{t\to0+}\nabla\mathcal{S}_{\p B}[\phi](\Bx\pm t\nu(\Bx))=\pm\frac{1}{2}\phi(\Bx)\quad \mbox{for a.e. }\Bx\in \p B.\eeq
We deduce from \eqnref{eq:jump:nd} that $H$ defined in \eqnref{def:H} solves the boundary value problem \eqnref{eq:solH:woD}.

As is well-known {\cite[Theorem 3.1]{KS1996LPTICP}}, the solution $u$ to \eqnref{e0} up to constant satisfies 
\beq\label{u:Scal}
u(\Bx)=\Scal_{\p D}[\varphi_i](\Bx)+\Scal_{\p \Om}[\varphi_e](\Bx),\quad\Bx\in B_e\eeq
for some density functions $(\varphi_i,\varphi_e)\in L^2_0(\p D)\times L^2_0(\p \Om)$. By the boundary conditions in \eqnref{e0},
\begin{equation}\label{eqn:density:transmission}\begin{bmatrix}
\ds\frac{1}{2\tau}I & \ds-\frac{\p}{\p\nu} \Scal_{\partial \Om}\\[2.5mm]
\ds\frac{\partial}{\partial \nu}\Scal_{\p D}&\ds-\frac{1}{2}I
\end{bmatrix}\begin{bmatrix}
\ds \varphi_i \\ \ds \varphi_e
\end{bmatrix}=\begin{bmatrix}
\ds 0 \\\ds  {g}
\end{bmatrix}\quad\mbox{with }\tau=\frac{k-1}{k+1}.\end{equation}
In the following subsection, we will find an expression of $u$ in series by solving \eqnref{eqn:density:transmission}.


%

\subsection{Representation of $u$ in terms of repeated reflections}
Let $B$ be a disk centered at ${c}$ and $\nu$ be the outward unit normal vector on $\partial B$. If $v$ is harmonic in $B$ and continuous on $\overline{B}$, then we have
\beq\label{Scal:disk1}
\Scal_{\partial B}\left[\frac{\partial v}{\partial \nu}\Big|_{\partial B}^-\right](x)=\begin{cases}
\ds-\frac{1}{2}v(x)+\frac{v(c)}{2} & \quad\mbox{for }x\in B,\\[2mm]
\ds-\frac{1}{2}R_{\partial B}[v](x)+\frac{v(c)}{2} & \quad\mbox{for }x\in \mathbb{R}^2\backslash \overline{B}.
\end{cases}
\eeq
If $v$ is harmonic in $\mathbb{R}^2\backslash\overline{B}$, continuous on $\RR^2\setminus B$, and $\lim_{|\Bx|\to\infty}v(\Bx)=0$, then we have
\beq\label{Scal:disk2}
\Scal_{\partial B}\left[\frac{\partial v}{\partial \nu}\Big|_{\partial B}^+\right](x)=\begin{cases}
\ds\frac{1}{2}R_{\partial B}[v](x)&\quad\mbox{for }x\in B\\[2mm]
\ds\frac{1}{2}v(x)&\quad\mbox{for }x\in \mathbb{R}^2\backslash \overline{B}.
\end{cases}
\eeq
One can find the proof of \eqnref{Scal:disk1} and \eqnref{Scal:disk2} in \cite{AKL}, which basically owes to the uniqueness of the solutions to Neumann-type boundary value problems. Using \eqnref{Scal:disk1} and \eqnref{Scal:disk2}, we will express the density functions $\varphi_i$ and $\varphi_e$ in terms of repeated reflections.

To simplify the notation, we denote the series of reflections for a function, say $f$, as
\begin{align*}
R_{i}[f](\Bx)&:=f(R_i(\Bx)),\\
R_{e}R_{i}[f](\Bx)&: =f(R_{i}\circ R_{e}(\Bx)),\quad \Bx\in \RR^2.
\end{align*}
We define the other combinations in the same manner. In addition, we set
	\begin{equation}\label{def:xik}
	\begin{cases}
	\ds\xi_{i,n}=2n(\xi_i-\xi_e)+\xi_i,\\
	\ds\xi_{e,n} = 2n(\xi_i-\xi_e)+\xi_e
	\end{cases}
	\end{equation} 
	for each $n\in\NN\cup\{0\}$. 
	One can show $$\xi_i>\xi_e>0$$ directly from \eqnref{def:xixe}; thus, $\xi_{i,k}\geq\xi_i$ and $\xi_{e,k+1}\geq \xi_i$ for each $k\ge0$.%

\begin{lemma}\label{lemma:density:series} The solution $u$ to \eqnref{e0} admits the series expansion
\[u=\begin{cases}\ds H+\sum_{n=0}^\infty(-\tau)^{n+1}\Big[R_{\p D}(R_{\p\Om}R_{\p D})^n H+(R_{\p\Om}R_{\p D})^{n+1} H\Big]&\mbox{in }\Om\setminus\overline{D},\\[2mm]
\ds H+\sum_{n=0}^\infty(-\tau)^{n+1}\Big[(R_{\p\Om}R_{\p D})^n H+(R_{\p\Om}R_{\p D})^{n+1} H\Big]&\mbox{in }D.\end{cases}\]
\end{lemma}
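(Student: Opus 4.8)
The plan is to solve the $2\times2$ system \eqnref{eqn:density:transmission} for $(\varphi_i,\varphi_e)$ by Neumann (fixed-point) iteration, translate each step into a reflection across $\p D$ or $\p\Om$ via the identities \eqnref{Scal:disk1}--\eqnref{Scal:disk2}, and finally substitute the resulting series for $(\varphi_i,\varphi_e)$ back into \eqnref{u:Scal}. First I would rewrite the system as a fixed-point equation: from the second row, $\varphi_e = 2\,\frac{\p}{\p\nu}\Scal_{\p D}[\varphi_i]\big|_- - 2g$ on $\p\Om$, and from the first row, $\varphi_i = 2\tau\,\frac{\p}{\p\nu}\Scal_{\p\Om}[\varphi_e]\big|_-$ on $\p D$ (here the relevant traces are the ones prescribed by the interface geometry: on $\p D$ we use the interior trace of $\Scal_{\p\Om}[\varphi_e]$, which is harmonic near $\overline D$, and on $\p\Om$ we use the exterior trace of $\Scal_{\p D}[\varphi_i]$). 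Composing the two gives $\varphi_i = -2\tau\,\frac{\p}{\p\nu}\Scal_{\p\Om}\!\big[\,2g - 2\frac{\p}{\p\nu}\Scal_{\p D}[\varphi_i]\big]$, i.e. $\varphi_i = (\text{source term}) + (\text{contraction}) \varphi_i$, and I would iterate starting from $\varphi_i^{(0)}$ coming from the source.

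The key step is to recognize the operator $\varphi\mapsto 2\frac{\p}{\p\nu}\Scal_{\p B}[\varphi]$ as the Neumann-to-"reflection" map supplied by \eqnref{Scal:disk1}--\eqnref{Scal:disk2}. Concretely: given a function $v$ harmonic on one side of $\p B$, applying $\Scal_{\p B}$ to its normal derivative reproduces $v$ (up to the harmonic reflection $R_{\p B}[v]$ and an additive constant) on the other side; differentiating again and feeding the output to the next single-layer operator carries the reflection $R_{\p D}$ or $R_{\p\Om}$ onto the accumulated harmonic function. Because $H=-2\Scal_{\p\Om}[g]$ by \eqnref{def:H} and $H$ solves \eqnref{eq:solH:woD}, the source term of the iteration is exactly $H$ (via $\Scal_{\p\Om}$ applied to $g=\frac{\p H}{\p\nu}|_-$), and each subsequent application of the composed operator prepends an $R_{\p D}$ or an $R_{\p\Om}$. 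Tracking the bookkeeping, the $n$-th iterate contributes the two terms $(R_{\p\Om}R_{\p D})^{n}H$ with a factor $(-\tau)$ per pair of reflections; inside $D$ both chains start with $R_{\p D}$ "absorbed" (since $R_{\p D}$ of something harmonic in $D$, evaluated back in $D$, is the reflection), which is why the two cases in the Lemma differ only by whether the first reflection $R_{\p D}$ survives. Summing the geometric-type series in $\tau$ and collecting terms gives precisely the stated formula; the additive constants produced at each stage cancel or are absorbed into the "$u$ up to constant" ambiguity.

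The convergence of the Neumann series is what makes this rigorous and is the main obstacle. One must show that the composed operator $\mathcal{T}:\varphi_i\mapsto -4\tau\,\frac{\p}{\p\nu}\Scal_{\p\Om}\frac{\p}{\p\nu}\Scal_{\p D}[\varphi_i]$ is a contraction on $L^2_0(\p D)$ (or at least has spectral radius $<1$). Geometrically each reflection $R_{\p D}$, $R_{\p\Om}$ composed with the single-layer/normal-derivative pair is bounded by a factor governed by the ratio of the disks' "conformal radii," and the repeated-reflection picture in bipolar coordinates — the $\xi$-values $\xi_{i,n},\xi_{e,n}$ marching off to $+\infty$ as in \eqnref{def:xik}, together with $\xi_i>\xi_e>0$ — shows that $(R_{\p\Om}R_{\p D})^n H$ is harmonic on increasingly large regions and has geometrically decaying "strength," so the series converges uniformly on $\overline{B_i}$ and locally uniformly on $B_e$; combined with $|\tau|<1$ this closes the argument. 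I would also verify that the termwise-reflected series indeed satisfies all four conditions in \eqnref{e0} (harmonicity is automatic since each $R_{\p B}$ of a harmonic function is harmonic off $\p B$; the transmission conditions follow from the jump relations \eqnref{eq:jump:nd} applied term-by-term), and invoke the uniqueness statement quoted after \eqnref{e0} to conclude the series equals $u$. \qed
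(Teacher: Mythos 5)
Your proposal is correct and takes essentially the same route as the paper: the authors likewise write the Neumann (iterated-reflection) series for the densities solving \eqnref{eqn:density:transmission}, convert each term via \eqnref{Scal:disk1}--\eqnref{Scal:disk2}, and justify convergence before substituting into \eqnref{u:Scal}. The only point to tighten is that the reflected terms $(R_{\p\Om}R_{\p D})^n H$ do not themselves decay --- the convergence comes solely from $|\tau|<1$ together with the uniform bound on the scale-factor ratios (the paper's \eqnref{ineq:hfrac}) and on $\|\nabla H\|_{L^\infty}$, not from any ``geometrically decaying strength'' of the reflections.
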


%
%
\begin{proof}
For simplicity, we use only in this proof the notation $$\widetilde{f}(\Bx)=f(\Bx-(x_0,0))\quad\mbox{for any function }f$$ for the translation of domains as in \eqnref{def:bebi}. In addition, we set
$$\frac{\partial \Scal_{\partial B_e}}{\partial \nu_i}[f]=\frac{\partial }{\partial \nu_i}\left(\Scal_{\partial B_e}[f]\right)\quad\mbox{for }\nu_i=\nu_{B_i}\mbox{ and }f\in L^2(\p B_e),$$
and likewise for $\frac{\partial \Scal_{\partial B_i}}{\partial \nu_e}$. 
From \eqnref{Scal:disk1} and \eqnref{Scal:disk2}, we first observe that $(\varphi_i,\varphi_e)$ obtained from
\begin{align}
\label{eq:density:ref:i}\widetilde{\varphi}_i&=-4\tau\sum_{n=0}^\infty\left(-\tau\right)^{n}\frac{\partial}{\partial\nu_i}\big((R_eR_i)^n\Scal_{\partial B_e}[\widetilde{g}]\big)\quad\mbox{and}\\
\label{eq:density:ref:e}\widetilde{\varphi}_e&=-2\widetilde{g}+4\tau\sum_{n=0}^\infty\left(-\tau\right)^{n}\frac{\partial }{\partial \nu_e}\big(R_i(R_eR_i)^n\Scal_{\partial B_e}[\widetilde{g}]\big)
\end{align}
formally satisfy Eq.\;\eqnref{eqn:density:transmission}. In the bipolar coordinates,
\begin{align}
\label{eq:density:i}\ds \widetilde{\varphi}_i(\mathbf{x}(\xi_i,\theta))
&=2\tau\sum_{n=0}^\infty(-\tau)^n\frac{h(\xi_i,\theta)}{h(\xi_{i,n},\theta)}\frac{\partial \widetilde{H}}{\partial\nu}(\Bx(\xi_{i,n},\theta)),\\
\label{eq:density:e}\ds \widetilde{\varphi}_e(\mathbf{x}(\xi_e,\theta))
&=-2\widetilde{g}-2\tau\sum_{n=0}^\infty(-\tau)^n\frac{h(\xi_e,\theta)}{h(\xi_{e,n+1},\theta)}\frac{\partial \widetilde{H}}{\partial\nu}(\Bx(\xi_{e,n+1},\theta)),
\end{align} as can be seen from \eqnref{def:H}, \eqnref{reflection:bipolar}, \eqnref{N:bipolar} and \eqnref{def:xik}.

Similarly to \cite[Lemma 3.3]{imageCharge2018}, we can prove that \eqnref{eq:density:i} and \eqnref{eq:density:e} converge uniformly in $\p B_i$ and $\p B_e$ respectively, using the facts that $\left|\tau\right|<1$, 
\beq \label{ineq:hfrac}
0<\frac{h(\xi,\theta)}{h(\tilde{\xi},\theta)}\leq 1\quad\mbox{for any }\xi\leq\tilde{\xi},
\eeq
and that $\|\nabla \widetilde{H}\|_{L^\infty(B_e)}$ is bounded independently of $\ep$. Therefore, we can use \eqnref{Scal:disk1} and \eqnref{Scal:disk2} term-by-term in the expressions \eqnref{eq:density:ref:i} and \eqnref{eq:density:ref:e} to complete the proof.
\end{proof}

\begin{figure}[h!]
\centering
\includegraphics[width=\textwidth]{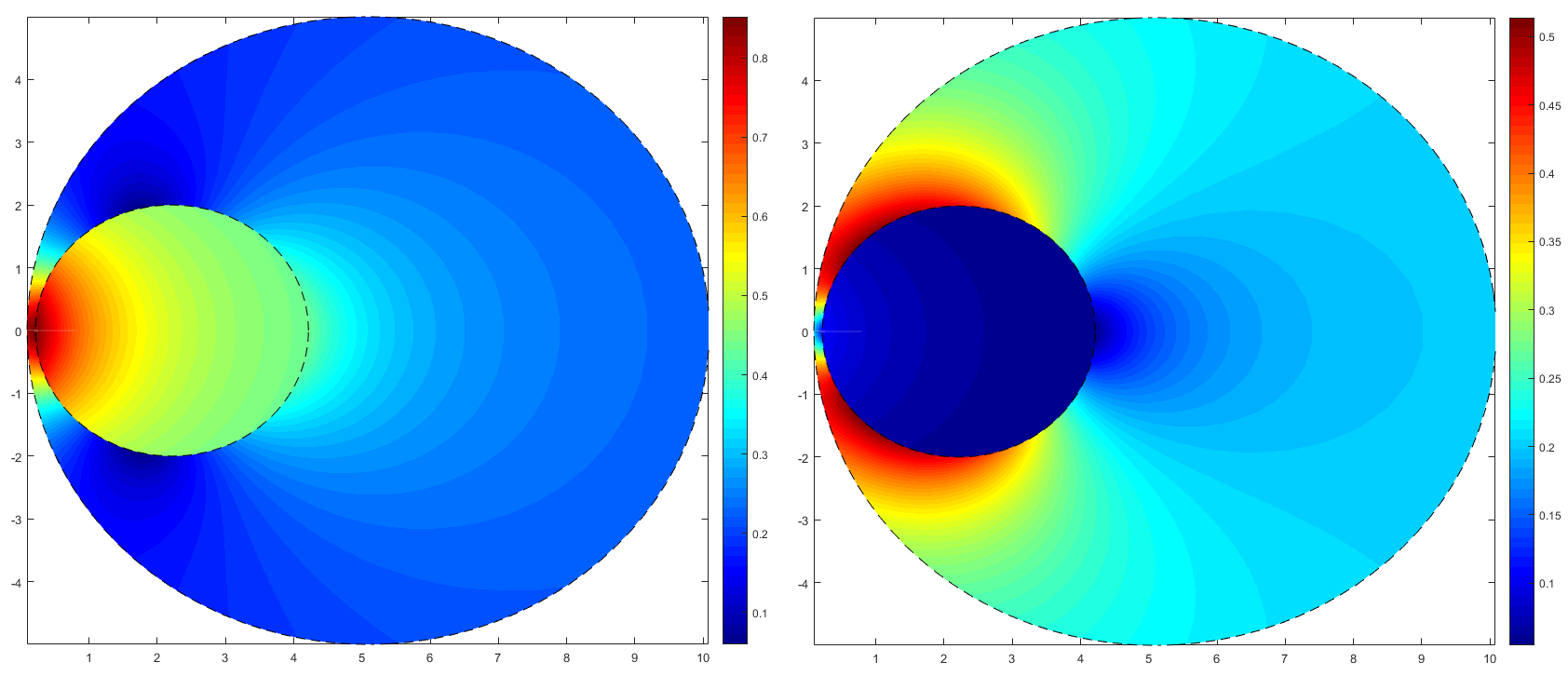}
\caption{Using Lemma \ref{lemma:density:series} and \ref{lem:vrep:ref}, we plot $|(\nabla u)(\cdot-(x_0,0))|$ (left) and $|(\nabla v)(\cdot-(x_0,0))|$ (right) with respect to the boundary conditions $g(t)=\sin t$ and $g_d(t)=\sin t$, respectively, where $t$ parametrizes $\p \Om$ by $r_e+r_e e^{it}$. Here, the geometric parameters are set to be $r_i=2$, $r_e=5$ and $\ep=1/8$. The relative conductivity of the inclusion $D$ is $k=1/8$ for $u$ and $k=8$ for $v$.}
\end{figure}

\begin{remark}\label{remark:series:regularity}
\begin{itemize}
\item In the remainder of this article, we assume that \beq\label{cond:H} H\in C^2(\overline{\Om}).\eeq
For instance, it is sufficient to assume
\beq\label{cond:g}
g\in C^{1,\delta}(\p \Om)\quad\mbox{for some }\delta>1/2,
\eeq 
as $H$ is a harmonic function in the disk $\Om$ satisfying 
$$\pd{H}{\nu}=g\quad\mbox{on }\p\Om.$$ One can show that the condition \eqnref{cond:g} implies \eqnref{cond:H} using Bernstein's theorem, which states that if a function $f$ defined on a circle is H\"{o}lder continuous of order $>1/2$, then the Fourier series of $f$ converges absolutely.
 
\item Since $|\tau|<1$ and $H\in C^2(\overline{\Om})$, differentiating \eqnref{eq:density:i} and \eqnref{eq:density:e} term-by-term gives uniformly convergent series, which implies $\varphi_i\in C^1(\p B_i)$ and $\varphi_e\in C^1(\p B_e)$. Moreover,
	\beq\label{eq:Fourier:absConv}\sup_{\p B_i}\left|\frac{d\varphi_i}{dT}\right|,\;\sup_{\p B_e}\left|\frac{d\varphi_e}{dT}\right|\le \left(2+4\sum_{m=1}^\infty m |\tau|^m\right)\|H\|_{C^2(\Om)}<\infty,\eeq
	where $\|H\|_{C^2(\Om)}$ is the sum of the supremums of $|H|$, $|DH|$ and $|D^2H|$ in $\Om$.
\end{itemize}

\end{remark}

\begin{cor}\label{boundedness1}
		Suppose that $g\in C^{1,\delta}(\p\Om)$. If $k$ is bounded but not small, then $|\nabla u|$ is uniformly bounded in $\Om$ independently of $(\ep,k)$.
	\end{cor}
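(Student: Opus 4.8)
The plan is to read off the uniform bound directly from the series representation in Lemma \ref{lemma:density:series} together with the contraction estimate \eqnref{ineq:hfrac}. Write $u = H + u_b$ in each of $D$ and $\Om\setminus\overline{D}$, where $u_b$ is the tail series; since $g\in C^{1,\delta}(\p\Om)$ with $\delta>1/2$, Remark \ref{remark:series:regularity} gives $H\in C^2(\overline{\Om})$, so $\|\nabla H\|_{L^\infty(\Om)}$ is bounded independently of $\ep$ (recall $\Om$ is fixed). Thus it suffices to bound $\nabla u_b$ uniformly in $(\ep,k)$ on the range where $k$ is bounded away from $0$; equivalently (since $\tau=\frac{k-1}{k+1}$), on the range where $|\tau|$ is bounded away from $1$, say $|\tau|\le\tau_0<1$.

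The key step is the gradient estimate for each reflected term. Working in bipolar coordinates and using \eqnref{N:bipolar}–\eqnref{T:bipolar} together with \eqnref{reflection:bipolar}, the gradient of a term like $(R_{\p\Om}R_{\p D})^nH$ evaluated at a point with coordinate $\xi$ is, up to the orthonormal frame $\{\Be_\xi,\Be_\theta\}$, comparable to $\frac{h(\xi,\theta)}{h(\xi',\theta)}$ times $\nabla H$ evaluated at the reflected coordinate $\xi'$, where $\xi'\in\{\xi_{i,n},\xi_{e,n}\}$ satisfies $\xi'\ge\xi_i>\xi$ for all the relevant terms (by the remark following \eqnref{def:xik}, $\xi_{i,k}\ge\xi_i$ and $\xi_{e,k+1}\ge\xi_i$). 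Hence \eqnref{ineq:hfrac} yields $\frac{h(\xi,\theta)}{h(\xi',\theta)}\le 1$, so each reflected term contributes at most $\|\nabla H\|_{L^\infty(\Om)}$ to $|\nabla u_b|$, uniformly in $\ep$. Summing the geometric series, $|\nabla u_b|\le C\sum_{n\ge0}|\tau|^{n+1}\cdot\|\nabla H\|_{L^\infty(\Om)}\le \frac{C\tau_0}{1-\tau_0}\|\nabla H\|_{L^\infty(\Om)}$, and the differentiation term-by-term is justified exactly as in Remark \ref{remark:series:regularity} (the same bound \eqnref{eq:Fourier:absConv} controls the tangential derivatives of the densities, hence the gradients).

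The main obstacle — really the only subtlety — is making the passage from "$\nabla$ of a reflected harmonic function" to "$\frac{h}{h}$ times $\nabla H$ at the reflected point" fully rigorous, i.e. checking that composing with the Möbius-type reflections $R_{\p\Om},R_{\p D}$ (which in bipolar coordinates act as $\xi\mapsto 2\xi_0-\xi$ by \eqnref{reflection:bipolar}) transforms the gradient with conformal factor exactly $h(\xi,\theta)/h(\xi',\theta)$ along each coordinate direction, with no extra $\ep$-dependent constants creeping in; this is the content already used in deriving \eqnref{eq:density:i}–\eqnref{eq:density:e}, so it can be quoted. I do not expect the boundedness of $k$ from above to play any role beyond ensuring $\tau\le\tau_0<1$; the hypothesis "$k$ not small" is what prevents $\tau\to-1$ and the series from failing to be uniformly summable, and that is exactly where it enters.
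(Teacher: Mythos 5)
Your proof is correct, but it follows a genuinely different route from the paper's. You work directly with the reflection series of Lemma \ref{lemma:density:series}: since each term is $H$ composed with an (anti)conformal chain of reflections acting in bipolar coordinates as $\xi\mapsto 2\xi_0-\xi$, $\theta\mapsto\theta$, the chain rule gives a telescoping conformal factor $h(\xi,\theta)/h(\xi',\theta)$ with final coordinate $\xi'\ge\xi>0$, so by \eqnref{ineq:hfrac} each term's gradient is at most $2\|\nabla H\|_{L^\infty(\Om)}$ and the geometric series in $|\tau|\le\tau_0<1$ closes the argument. The paper instead stays at the level of the layer densities: it combines the Lipschitz bound \eqnref{eq:Fourier:absConv} on $d\varphi_i/dT$, $d\varphi_e/dT$ with Bernstein's theorem to control the absolute Fourier sums of $\varphi_i,\varphi_e$, and then uses that $\|\nabla u\|_{L^\infty(\Om)}$ in the representation \eqnref{u:Scal} is dominated by those sums. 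Your argument is more elementary (no Fourier analysis or Bernstein's theorem) and really only needs $\nabla H$ bounded on $\overline\Om$, whereas the paper's route reuses machinery from Remark \ref{remark:series:regularity} that serves it elsewhere. Two small points to tidy: at interior points the reflected coordinate is $\xi+2n(\xi_i-\xi_e)$ or $2\xi_i-\xi+2n(\xi_i-\xi_e)$ rather than exactly $\xi_{i,n}$ or $\xi_{e,n}$ (and for $\Bx\in D$ one only has $\xi'\ge\xi$, not $\xi'\ge\xi_i>\xi$), but $\xi'\ge\xi>0$ is all that \eqnref{ineq:hfrac} requires; and term-by-term differentiation is best justified by the uniform convergence of the differentiated series itself (each differentiated term is bounded by $2\tau_0^{n+1}\|\nabla H\|_{L^\infty(\Om)}$), rather than by citing \eqnref{eq:Fourier:absConv}, which concerns the densities. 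You are also right that both halves of the hypothesis on $k$ enter only through $|\tau|\le\tau_0<1$: boundedness above keeps $\tau$ away from $1$, and ``not small'' keeps it away from $-1$.
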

	\begin{proof}
	The assumption for $k$ is simplified into
	\[|\tau|\le \tau_0<1\]
	for some uniform constant $\tau_0$. On the other hand, if a $2\pi$-periodic function $\varphi$ is H{\"o}lder continuous of order $\delta>1/2$, namely
	$$|\varphi(t_1)-\varphi(t_2)|\le C_\varphi|t_1-t_2|^\delta,$$
	then Bernstein's theorem states that
	\beq\label{ineq:Fourier:ufBd}\sum_{n=-\infty}^\infty|\hat{\varphi}(n)|\le C_\delta C_\varphi<\infty,\quad\mbox{where}\quad\hat{\varphi}(n)=\frac{1}{2\pi}\int_{-\pi}^\pi\varphi(t)e^{-int}dt\eeq
for some constant $C_\delta$ depending only on $\delta$. Combining \eqnref{eq:Fourier:absConv} with \eqnref{ineq:Fourier:ufBd}, we conclude that both $$\varphi(t)=\varphi_i(c_i+r_ie^{it})\quad\mbox{and}\quad \varphi(t)=\varphi_e(c_e+r_ee^{it})$$ satisfy
	\beq\label{ineq:Fourier:particularUB}\sum_{n=-\infty}^\infty|\hat{\varphi}(n)|\le C_{\delta=1}\left(2+4\sum_{m=1}^\infty m \tau_0^m\right)\|H\|_{C^2(\Om)}.\eeq
	Since $\|\nabla u\|_{L^\infty(\Om)}$ is bounded by the sum of the absolute Fourier series of $\varphi_i$ and $\varphi_e$, we complete the proof as the right hand side of \eqnref{ineq:Fourier:particularUB} is independent of $(\ep,k)$.
	\end{proof}

\subsection{Blow-up contribution of $H=-2\mathcal{S}_{\p\Omega}[g]$ to $\nabla u$}

In this subsection, we extend Corollary \ref{boundedness1} to include the limiting behavior as $k\to 0$ and $k\to\infty$. For the remainder of this article, we set the two parameters \beq\label{def:c1c2} C_1:=-\pd{H}{\nu}(0,0)=\pd{H}{x_1}(0,0)\quad \mbox{and}\quad C_2:=-\pd{H}{T}(0,0)=\pd{H}{x_2}(0,0),\eeq
where $H$ is defined by \eqnref{def:H}. In addition, we adopt the notation
\[r_*:=\sqrt{\frac{2r_ir_e}{r_e-r_i}}\quad\mbox{so that}\quad \alpha=r_*\sqrt{\ep}+O(\ep\sqrt{\ep}).\]

\begin{lemma}{\rm(\cite[Lemma 3.3]{imageCharge2018})} \label{coshsum} 
Let $N_\ep = \frac{r_*}{\sqrt{\epsilon}}$, then we have
		\begin{equation}\label{ineq:lem33}
		\begin{cases}
		\ds\sum_{m=0}^{N_\ep} \frac{h(\xi_i,\theta)}{h(\xi_{i,m},\theta)}
		\le  \frac{C}{|\Bx(\xi_i,\theta)|} \qquad\mbox{for all }|\theta|>\frac{\pi}{2},\\[4mm]
		\ds\sum_{m=0}^{N_\ep} \frac{h(\xi_e,\theta)}{h(\xi_{e,m+1},\theta)}
		\le  \frac{C}{|\Bx(\xi_e,\theta)|} \qquad\mbox{for all }|\theta|>\frac{\pi}{2},
		\end{cases}
		\end{equation}
where $C$ is a constant number independent of $(\ep,\theta)$.  

			\end{lemma}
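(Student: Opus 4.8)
The plan is to work entirely in bipolar coordinates and estimate the sums via the explicit scale factor $h(\xi,\theta) = (\cosh\xi+\cos\theta)/\alpha$. The starting point is the observation that along the $\xi$-level curves the ratio telescopes nicely: $\frac{h(\xi_i,\theta)}{h(\xi_{i,m},\theta)} = \frac{\cosh\xi_i+\cos\theta}{\cosh\xi_{i,m}+\cos\theta}$, and since $\xi_{i,m} = 2m(\xi_i-\xi_e)+\xi_i \ge \xi_i$ with gap $\xi_i - \xi_e > 0$, the denominators $\cosh\xi_{i,m}$ grow. First I would record that for $|\theta| > \pi/2$ one has $\cos\theta \in (-1,0)$, so $\cosh\xi_{i,m} + \cos\theta \ge \cosh\xi_{i,m} - 1 = 2\sinh^2(\xi_{i,m}/2)$, giving a clean lower bound on each denominator that is bounded away from zero for $m\ge 1$ (and for $m=0$ one uses instead that $\cosh\xi_i + \cos\theta \ge \cosh\xi_i - 1 > 0$ but is comparable to $|\Bx(\xi_i,\theta)|^{-1}$ only after a more careful comparison — see below). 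The point of the hypothesis $|\theta|>\pi/2$ is precisely to keep $\cosh\xi + \cos\theta$ uniformly comparable to $\cosh\xi$ rather than allowing the near-cancellation $\cosh\xi \approx 1$, $\cos\theta\approx -1$ that occurs near the poles.

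Next I would split the sum $\sum_{m=0}^{N_\ep}$ into the $m=0$ term and the tail $m\ge 1$. For the tail, using $\cosh\xi_{i,m}+\cos\theta \ge \cosh\xi_{i,m}-1$ and $\cosh x - 1 \ge c\,e^{x}$ for $x$ bounded below, the ratio is bounded by $C\,e^{\xi_i}\,e^{-\xi_{i,m}} = C\,e^{-2m(\xi_i-\xi_e)}$, a geometric series in $m$; crucially, $\xi_i - \xi_e$ is bounded below by a multiple of $\sqrt{\ep}$ (this follows from \eqnref{def:xixe} and $\alpha = r_*\sqrt\ep + O(\ep\sqrt\ep)$, since $\xi_i, \xi_e \sim \alpha/c_i, \alpha/c_e$ to leading order with $c_i, c_e$ bounded away from $0$ and from each other). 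Hence $\sum_{m\ge 1} e^{-2m(\xi_i-\xi_e)} \le \frac{C}{1 - e^{-2(\xi_i-\xi_e)}} \le \frac{C'}{\xi_i - \xi_e} \le \frac{C''}{\sqrt\ep}$. Meanwhile $|\Bx(\xi_i,\theta)|$: from \eqnref{bipolar:z}, $|z(\xi_i,\theta)|$ is computed to be $\alpha\big|e^{\xi_i+\mathrm{i}\theta}-1\big|/\big|e^{\xi_i+\mathrm{i}\theta}+1\big|$; for $\xi_i = O(\sqrt\ep)$ small this is $\asymp \alpha/\xi_i \asymp c_i$, i.e. $|\Bx(\xi_i,\theta)|$ is bounded away from $0$ uniformly — so actually the statement $\sum \le C/|\Bx(\xi_i,\theta)|$ is, for $m\ge1$, just $\sum \le C/\sqrt\ep \cdot \sqrt\ep \lesssim C$, and I must be careful that the claimed bound $C/|\Bx|$ is genuinely an upper bound and not asking for something sharper than $O(1/\sqrt\ep)$. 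I would double-check against \cite[Lemma 3.3]{imageCharge2018}: the correct reading is that $|\Bx(\xi_i,\theta)|$ there denotes a quantity that degenerates like $\sqrt\ep$ near $\theta = \pm\pi$, and the bound $C/|\Bx(\xi_i,\theta)|$ captures exactly the $\theta$-dependence of the tail. So the refined estimate I would actually prove is $\cosh\xi_{i,m}+\cos\theta \ge c(\cosh\xi_{i,m} - \cos\xi_i) \cdot (\text{something})$ controlling both the $\sqrt\ep$ decay in $m$ and the $|\Bx|^{-1}$ blow-up in $\theta$; concretely, $\frac{h(\xi_i,\theta)}{h(\xi_{i,m},\theta)} \le \frac{\cosh\xi_i + \cos\theta}{\cosh\xi_{i,m} + \cos\theta}$ and then bound the whole sum by comparing $\sum_m \frac{1}{\cosh\xi_{i,m}+\cos\theta}$ with an integral $\int_0^\infty \frac{dt}{\cosh(2t(\xi_i-\xi_e)+\xi_i)+\cos\theta}$, which evaluates (via the substitution $s = e^{\cdot}$) to something of order $\frac{1}{\xi_i - \xi_e}\cdot\frac{1}{\sqrt{|\cos\theta|(\text{stuff})}}$, matched against $|\Bx(\xi_i,\theta)|^{-1} \asymp \frac{\cosh\xi_i+\cos\theta}{\alpha|\sin\theta|}$ or a similar explicit expression.

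The second inequality is handled identically with $\xi_e$ in place of $\xi_i$ and $\xi_{e,m+1}$ in place of $\xi_{i,m}$; note $\xi_{e,m+1} = 2m(\xi_i-\xi_e) + (2(\xi_i-\xi_e)+\xi_e) = 2m(\xi_i - \xi_e) + \xi_{i} + (\xi_i - 2\xi_e)$... — more simply $\xi_{e,m+1} \ge \xi_i > \xi_e$ for all $m\ge 0$, which was already recorded in the text, so every denominator $\cosh\xi_{e,m+1} + \cos\theta$ exceeds $\cosh\xi_i + \cos\theta > 0$ and the same geometric-decay argument applies verbatim, with the constant $C$ in the conclusion absorbing the shift. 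The main obstacle — and the step deserving genuine care rather than routine bookkeeping — is getting the $\theta$-dependence right: one must produce the factor $|\Bx(\xi_i,\theta)|^{-1}$ on the right-hand side rather than a cruder $\ep^{-1/2}$, which requires tracking how $\cosh\xi_i + \cos\theta$ (the numerator $h(\xi_i,\theta)$ up to the factor $\alpha$) interacts with the denominators uniformly in $\theta$ over the range $|\theta| > \pi/2$, and identifying the integral/series comparison that yields exactly $|\Bx(\xi_i,\theta)|^{-1}$ after using \eqnref{bipolar:z} to express $|\Bx(\xi_i,\theta)|$ explicitly in terms of $\alpha$, $\xi_i$, and $\theta$. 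Since this is quoted as \cite[Lemma 3.3]{imageCharge2018}, I would in practice cite it and only sketch the comparison; the honest reconstruction above is what one would write if a self-contained proof were demanded.
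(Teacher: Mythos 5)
Note first that the paper does not prove this lemma at all: it is imported verbatim as \cite[Lemma 3.3]{imageCharge2018}, so your closing remark that in practice you would cite it is exactly what the paper does. Judged as a self-contained reconstruction, however, your sketch has a genuine gap and rests on two geometric misreadings that would derail the ``refined estimate'' you defer. From \eqnref{bipolar:z} one has $|\Bx(\xi,\theta)|=\alpha\sqrt{\sinh^2\xi+\sin^2\theta}\,/(\cosh\xi+\cos\theta)$, so on the range $|\theta|>\pi/2$ the quantity $|\Bx(\xi_i,\theta)|$ equals $\alpha\sim\sqrt\ep$ at $\theta=\pm\pi/2$ and grows to $\alpha\coth(\xi_i/2)=c_i+r_i=O(1)$ at $\theta=\pm\pi$ --- the opposite of your reading that it ``degenerates like $\sqrt\ep$ near $\theta=\pm\pi$''. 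Hence the bound $C/|\Bx(\xi_i,\theta)|$ is weakest, namely $O(1)$, precisely near $\theta=\pm\pi$, and the crude geometric-series estimate you set up first (each term $\le Ce^{-2m(\xi_i-\xi_e)}$, sum $\le C/\sqrt\ep$) cannot yield the lemma there. That geometric bound is itself unjustified: $\cosh x-1\ge c\,e^{x}$ requires $x$ bounded below by a positive constant, whereas $\xi_{i,m}=\xi_i+2m(\xi_i-\xi_e)=O(m\sqrt\ep)$ is small for all $m\ll N_\ep$. Likewise, your explanation of the hypothesis $|\theta|>\pi/2$ is backwards: the near-cancellation $\cosh\xi_i\approx1$, $\cos\theta\approx-1$ occurs exactly as $\theta\to\pm\pi$, inside the allowed range, and it is what rescues the estimate there because it makes the common numerator $\cosh\xi_i+\cos\theta$ small; what the hypothesis excludes is the gap region $\theta\approx0$, where $|\Bx(\xi_i,\theta)|=O(\ep)$ and a different argument is used (compare the case $|\theta|\le\pi/2$ in the proof of Lemma \ref{lemma:boundedness} in Appendix \ref{proof:boundedness}).

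The step that actually proves the lemma is the one you leave as ``something of order $\dots$''. After factoring the common numerator, the claim reduces to $\sum_{m=0}^{N_\ep}(\cosh\xi_{i,m}+\cos\theta)^{-1}\le C\big(\alpha\sqrt{\sinh^2\xi_i+\sin^2\theta}\big)^{-1}$. Since the summand decreases in $m$ with spacing $2(\xi_i-\xi_e)\asymp\alpha$, compare with the first term plus $\frac{1}{2(\xi_i-\xi_e)}\int_{\xi_i}^{\infty}\frac{dt}{\cosh t+\cos\theta}$; using $1+\cos\theta\asymp(\pi-|\theta|)^2$ and $\cosh t-1\asymp t^2$ on the relevant range, this integral is $O\big(1/\max(\pi-|\theta|,\xi_i)\big)$, and for $|\theta|>\pi/2$ one has $\max(\pi-|\theta|,\xi_i)\asymp\sqrt{\sinh^2\xi_i+\sin^2\theta}$, which matches the right-hand side uniformly in $(\ep,\theta)$. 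Your proposed matching quantity $(\cosh\xi_i+\cos\theta)/(\alpha|\sin\theta|)$ is not correct near $\theta=\pm\pi$, where $\sin\theta\to0$ while $|\Bx(\xi_i,\theta)|^{-1}$ stays bounded. Without carrying out this comparison (and the identical one for $\xi_e$, $\xi_{e,m+1}$), the stated $\theta$-dependent bound is not established; with it, the argument closes.
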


\begin{lemma}\label{lemma:boundedness} Suppose that $g\in C^{1,\delta}(\p\Om)$. If $C_1=C_2=0$, then we have
		\beq\label{eq:lindep:c1c20}\lVert \nabla u\rVert_{L^\infty (\Om)} \le C\eeq
		for some constant $C$ independent of $(\ep,k)$.
	\end{lemma}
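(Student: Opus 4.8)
Throughout I write $\widetilde{u}(\Bx)=u(\Bx-(x_0,0))$ and work with the translated picture, in which $\Om,D$ become $B_e,B_i$, the bipolar poles sit at $(\pm\alpha,0)$, and $\widetilde{H}(\Bx)=H(\Bx-(x_0,0))$; note $\|\nabla u\|_{L^\infty(\Om)}=\|\nabla\widetilde u\|_{L^\infty(B_e)}$.

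\textbf{Step 1 (reduction to a source vanishing to second order).} Since $C_1=C_2=0$ means $\nabla H(0,0)=0$, set $Q=H-H(0,0)$. Then $Q$ is harmonic, $Q(0,0)=0$, $\nabla Q(0,0)=0$, and $\|Q\|_{C^2(\overline\Om)}\le\|H\|_{C^2(\overline\Om)}=:C_0$, so $|\nabla Q(\Bx)|\le C_0|\Bx|$ on $\overline\Om$; hence $|\nabla\widetilde Q(\Bx)|\le C_0(|\Bx|+|x_0|)$ with $|x_0|=O(\ep)$. Problem \eqnref{e0} depends linearly on $g$, hence on $H$, and the constant field $H(0,0)$ produces a constant solution; therefore $\nabla u$ equals the gradient of the solution driven by $Q$, and Lemma~\ref{lemma:density:series} and \eqnref{eq:density:i}--\eqnref{eq:density:e} hold verbatim with $\widetilde H$ replaced by $\widetilde Q$. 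All evaluation points occurring below have $\xi$-value $\ge\xi_e$, i.e.\ lie in $B_e$, so the bound for $|\nabla\widetilde Q|$ applies to them.

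\textbf{Step 2 (trace expansion and reduction to one series).} By \eqnref{reflection:bipolar} the reflections $R_{\p B_i},R_{\p B_e}$ act on bipolar coordinates by reflecting $\xi$ while fixing $\theta$; tracking the compositions in Lemma~\ref{lemma:density:series} on $\p B_i$ gives, up to an additive constant,
\[\widetilde u(\Bx(\xi_i,\theta))=\widetilde Q(\Bx(\xi_i,\theta))+\sum_{n=0}^\infty(-\tau)^{n+1}\Big[\widetilde Q(\Bx(\xi_{i,n},\theta))+\widetilde Q(\Bx(\xi_{i,n+1},\theta))\Big],\]
with every final argument at $\xi_{i,n}\ge\xi_i>0$ (intermediate $\xi$-values may be negative, but only the full composition is evaluated), and similarly on $\p B_e$. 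Differentiating term by term (legitimate by the uniform convergence of Remark~\ref{remark:series:regularity}, now applied to $\widetilde Q$), and using $\frac{\p}{\p T}=-h(\xi_i,\theta)\frac{\p}{\p\theta}$ from \eqnref{T:bipolar} together with $|\p_\theta(\widetilde Q\circ\Bx)|\le|\nabla\widetilde Q|/h$ from \eqnref{eqn:h}, we get
\[\Big|\tfrac{\p u}{\p T}(\Bx(\xi_i,\theta))\Big|\le|\nabla\widetilde Q(\Bx(\xi_i,\theta))|+2\sum_{n=0}^\infty|\tau|^{n+1}\frac{h(\xi_i,\theta)}{h(\xi_{i,n},\theta)}\,\big|\nabla\widetilde Q(\Bx(\xi_{i,n},\theta))\big|.\]
The same series (now with the harmless factor $|\tau|^{n+1}\le1$) also bounds $\|\widetilde\varphi_i\|_{L^\infty(\p B_i)}$ via \eqnref{eq:density:i}, since $|\p_\nu\widetilde Q|\le|\nabla\widetilde Q|$ by \eqnref{N:bipolar}. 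Hence it suffices to bound
\[\Sigma(\theta):=\sum_{n=0}^\infty\frac{h(\xi_i,\theta)}{h(\xi_{i,n},\theta)}\,\big|\nabla\widetilde Q(\Bx(\xi_{i,n},\theta))\big|\]
by a constant independent of $(\ep,k,\theta)$, and likewise its $\p B_e$-analogue.

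\textbf{Step 3 (the key estimate).} From \eqnref{bipolar:z}, $|\Bx(\xi,\theta)|=\alpha\sqrt{\sinh^2\xi+\sin^2\theta}\,/(\cosh\xi+\cos\theta)$, which is monotone in $\xi>0$ (increasing toward $\alpha$ when $\cos\theta>0$, decreasing when $\cos\theta<0$), so $|\Bx(\xi_{i,n},\theta)|\le\max\{|\Bx(\xi_i,\theta)|,\alpha\}$ for every $n$; moreover $|\Bx(\xi_i,\theta)|\,(\cosh\xi_i+\cos\theta)=\alpha\sqrt{\sinh^2\xi_i+\sin^2\theta}\le\alpha\cosh\xi_i\le2\alpha$, and $|\Bx(\xi_i,\theta)|\ge\alpha$ for $|\theta|\ge\pi/2$. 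I split accordingly. For $|\theta|\le\pi/2$: every $|\Bx(\xi_{i,n},\theta)|\le\alpha$, so $|\nabla\widetilde Q(\Bx(\xi_{i,n},\theta))|\le C_0(\alpha+|x_0|)=O(\sqrt\ep)$, while $\sum_n h(\xi_i,\theta)/h(\xi_{i,n},\theta)=O(1/\sqrt\ep)$ by an elementary estimate ($h(\xi_i,\theta)/h(\xi_{i,n},\theta)\le\min\{1,Ce^{-\xi_{i,n}}\}$ and $\#\{n:\xi_{i,n}\lesssim1\}\sim N_\ep$), so $\Sigma(\theta)=O(1)$. For $|\theta|>\pi/2$: $|\nabla\widetilde Q(\Bx(\xi_{i,n},\theta))|\le C_0(|\Bx(\xi_i,\theta)|+|x_0|)$, so
\[\Sigma(\theta)\le C_0\big(|\Bx(\xi_i,\theta)|+|x_0|\big)\Big(\sum_{n\le N_\ep}+\sum_{n>N_\ep}\Big)\frac{h(\xi_i,\theta)}{h(\xi_{i,n},\theta)};\]
Lemma~\ref{coshsum} bounds the head by $C/|\Bx(\xi_i,\theta)|$, and for $n>N_\ep$ one has $\cosh\xi_{i,n}+\cos\theta\ge\cosh\xi_{i,n}-1\ge c_1e^{\xi_{i,n}}$, giving a geometric tail $\le C'(\cosh\xi_i+\cos\theta)/\sqrt\ep$. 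Multiplying out and using $|\Bx(\xi_i,\theta)|\ge\alpha$, $|x_0|=O(\ep)$, and $|\Bx(\xi_i,\theta)|(\cosh\xi_i+\cos\theta)\le2\alpha$, each resulting term is $O(1)$. The argument on $\p B_e$ is identical.

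\textbf{Step 4 (conclusion; the main obstacle).} Thus $\|\tfrac{\p u}{\p T}\|_{L^\infty(\p B_i\cup\p B_e)}$ and $\|\varphi_i\|_{L^\infty(\p B_i)},\|\varphi_e\|_{L^\infty(\p B_e)}$ are bounded uniformly in $(\ep,k)$. On $\p\Om$, $\frac{\p u}{\p\nu}|_-=g$ is bounded; on $\p D$, the single-layer jump relation $\frac{\p u}{\p\nu}|_+-\frac{\p u}{\p\nu}|_-=\varphi_i$ combined with $\frac{\p u}{\p\nu}|_+=k\frac{\p u}{\p\nu}|_-$ gives $\frac{\p u}{\p\nu}|_-=(k-1)^{-1}\varphi_i$, $\frac{\p u}{\p\nu}|_+=k(k-1)^{-1}\varphi_i$, which are bounded because $\varphi_i$ carries the prefactor $2\tau=2(k-1)/(k+1)$. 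Hence $|\nabla u|$ is bounded on $\p D^\pm$ and on $\p\Om$ uniformly in $(\ep,k)$, and since each component of $\nabla u$ is harmonic in $D$ and in $\Om\setminus\overline D$, the maximum principle yields $\|\nabla u\|_{L^\infty(\Om)}\le C$ with $C$ independent of $(\ep,k)$. The main obstacle is Step~3: the damping factor $h(\xi_i,\theta)/h(\xi_{i,n},\theta)$ on its own only makes $\Sigma(\theta)$ of size $O(1/\sqrt\ep)$ — this is exactly what produces the blow-up when $(C_1,C_2)\ne(0,0)$ — so one must use the quadratic vanishing of $Q$ (encoded in $|\nabla\widetilde Q(\Bx(\xi_{i,n},\theta))|\lesssim\max\{|\Bx(\xi_i,\theta)|,\alpha\}+|x_0|$) in tandem with it; the delicate point is that near the far side $\theta=\pm\pi$ the reflected points are $O(1)$ away from the pole, so no $\theta$-dependence may be discarded in the denominators, and the identity $|\Bx(\xi_i,\theta)|(\cosh\xi_i+\cos\theta)\le2\alpha$ is precisely what keeps the bookkeeping tight.
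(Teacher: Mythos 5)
Your proof is correct and follows essentially the same route as the paper's own argument in Appendix \ref{proof:boundedness}: after using $C_1=C_2=0$ to get $|\nabla\widetilde H(\Bx)|=O(|\Bx|+\ep)$, you bound the same two series $\sum_m \frac{h(\xi_i,\theta)}{h(\xi_{i,m},\theta)}\big|\nabla\widetilde H(\Bx(\xi_{i,m},\theta))\big|$ (and the $\p B_e$ analogue) by splitting at $N_\ep=r_*/\sqrt{\ep}$, invoking Lemma \ref{coshsum} for $|\theta|>\pi/2$ and geometric decay of $h(\xi_i,\theta)/h(\xi_{i,m},\theta)$ for the tail. Your Step 4 (trace expansion, the jump relation giving $\p_\nu u|_\pm$ from $\varphi_i$, and the maximum principle for the components of $\nabla u$) simply makes explicit the reduction the paper asserts with ``it is sufficient to show,'' so it is a correct elaboration rather than a different method.
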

\begin{proof}
Recall from Remark \ref{remark:series:regularity} that $H\in C^2(\overline{\Om})$. Hence, the assumption $C_1=C_2=0$ implies
\beq\label{eq:Htild:bigO}
 \nabla H(\Bx-(x_0,0))=O(|\Bx|+\ep),\quad \Bx\in B_e
\eeq
where $x_0$ defined in \eqnref{def:Ce:Ci} satisfies $x_0=O(\ep)$. We can prove \eqnref{eq:lindep:c1c20} analogously to \cite[Lemma 3.6]{imageCharge2018}. Lemma \ref{coshsum} is essential in the derivation; see Appendix \ref{proof:boundedness} for the rest of the proof.
\end{proof}

\subsection{Blow-up contribution of $H_d=2\mathcal{D}_{\p\Omega}[g_d]$ to $\nabla v$}

In this subsection, we consider the second problem in \eqnref{e0} with Dirichlet-type boundary condition $v=g_d$ on $\p\Om$. It turns out that all the above arguments for $u$ can be applied for $v$ as well. In the application for $v$, we use the representation
\beq\label{rep:v} v=\mathcal{D}_{\p \Om}[\varphi_e^d]+\Scal_{\p D}[\varphi_i^d]\quad\mbox{in }\Om,\quad(\varphi_e^d,\varphi_i^d)\in L_0^2(\p\Om)\times L_0^2(\p\Om),\eeq
where the operator $\mathcal{D}_{\p U}$ is called to be the double-layer potential and is defined by
\beq\label{def:dcal} \mathcal{D}_{\p U}[\phi]=\frac{1}{2\pi}\int_{\p U}\frac{\p}{\p\nu_\zeta}\left(\ln|\mathbf{x}-\zeta|\right)\phi(\zeta)d\sigma(\zeta),\quad \phi\in L^2(\p U)\eeq
for bounded domains $U\subset\mathbb{R}^2$ with Lipschitz boundary. In this case, the harmonic function $H_d$ defined in \eqnref{def:H} replaces the role of $H$. Since the derivation using reflections and the proof of convergence are quite similar, we will directly present the results without detailed proofs.
\begin{lemma}\label{lem:vrep:ref} If $0<k\ne1$, the solution $v$ to \eqnref{e0} admits the representation in series as
\[v=\begin{cases}\ds H_d+\sum_{n=0}^\infty\tau^{n+1}\left[(R_{\p\Om}R_{\p D})^{n+1}H_d-R_{\p D}(R_{\p\Om}R_{\p D})^nH_d\right]&\mbox{in }\Om\setminus\overline{D},\\[2mm]
\ds H_d+\sum_{n=0}^\infty\tau^{n+1}\left[(R_{\p\Om}R_{\p D})^{n+1}H_d-(R_{\p\Om}R_{\p D})^nH_d\right]&\mbox{in }D.\end{cases}\]
\end{lemma}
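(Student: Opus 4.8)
The plan is to mimic the proof of Lemma \ref{lemma:density:series} for the Neumann problem, since the statement for $v$ is explicitly advertised as an analogue obtained by replacing $H$ with $H_d$, the single-layer representation by the mixed single/double-layer representation \eqnref{rep:v}, and $-\tau$ by $\tau$. First I would derive the integral system that the pair $(\varphi_e^d,\varphi_i^d)$ must satisfy: plugging \eqnref{rep:v} into the interface conditions on $\p D$ (continuity of $v$ and the flux jump with factor $k$) and the Dirichlet condition $v=g_d$ on $\p\Om$ yields a $2\times2$ operator system analogous to \eqnref{eqn:density:transmission}, now involving $\Scal_{\p D}$, $\Scal_{\p\Om}$, $\mathcal{D}_{\p\Om}$ and their normal derivatives, together with the jump relation $\mathcal{D}_{\p\Om}[\phi]|_\pm = (\mp\tfrac12 I + \mathcal{K}_{\p\Om})[\phi]$. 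One then observes, exactly as in the Neumann case, that a formal Neumann-series solution of this system can be written as a repeated-reflection series in powers of $\tau$.

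The key computational input is the pair of identities \eqnref{Scal:disk1}–\eqnref{Scal:disk2}, which convert $\Scal_{\p B}[\p v/\p\nu]$ into a reflection $R_{\p B}[v]$ plus a constant. One uses these to recognize that each application of $\frac{\p}{\p\nu}\Scal_{\p D}$ or $\frac{\p}{\p\nu}\mathcal{D}_{\p\Om}$ to a harmonic function produces (after another application of $\Scal$) a composition with $R_{\p D}$ or $R_{\p\Om}$; iterating generates the alternating strings $(R_{\p\Om}R_{\p D})^n H_d$ and $R_{\p D}(R_{\p\Om}R_{\p D})^n H_d$ appearing in the claimed formula, with the sign pattern $\tau^{n+1}$ (rather than $(-\tau)^{n+1}$) dictated by the double-layer jump relation carrying the opposite sign to the single-layer one. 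After producing the densities as reflection series, one substitutes back into \eqnref{rep:v}, again using \eqnref{Scal:disk1}–\eqnref{Scal:disk2} term by term, to obtain the stated piecewise formula for $v$ in $D$ and in $\Om\setminus\overline D$, distinguishing the two regions precisely because $R_{\p D}[H_d]$ agrees with $H_d$ only inside $D$, which is why the $D$-expansion has $(R_{\p\Om}R_{\p D})^nH_d$ where the exterior one has $R_{\p D}(R_{\p\Om}R_{\p D})^nH_d$.

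The step requiring genuine care — and the main obstacle — is justifying the interchange of the infinite sum with the layer-potential operators, i.e.\ proving that the reflection series for $\varphi_e^d$ and $\varphi_i^d$ converge uniformly so that \eqnref{Scal:disk1}–\eqnref{Scal:disk2} may be applied term by term. As in the proof of Lemma \ref{lemma:density:series}, this follows from three facts: $|\tau|<1$; the scale-factor monotonicity \eqnref{ineq:hfrac}, which controls the ratio $h(\xi,\theta)/h(\tilde\xi,\theta)$ along the nested level curves $\xi_{i,n},\xi_{e,n}$ defined in \eqnref{def:xik}; and the $\ep$-independent bound on $\|\nabla H_d\|_{L^\infty(B_e)}$, which holds under the regularity hypothesis on $g_d$ (the Dirichlet-side analogue of \eqnref{cond:H}, via Bernstein's theorem as in Remark \ref{remark:series:regularity}). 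One writes each term of the bipolar-coordinate series in the form $\mathrm{const}\cdot\tau^n \frac{h(\xi_i,\theta)}{h(\xi_{i,n},\theta)}\frac{\p \widetilde H_d}{\p\nu}(\Bx(\xi_{i,n},\theta))$ (and similarly with $\xi_e$), and dominates it by $C|\tau|^n$ uniformly in $\theta$, giving a convergent geometric majorant. Uniqueness of the solution to the Dirichlet-type transmission problem then forces the resulting $v$ to coincide with the true solution, completing the argument.
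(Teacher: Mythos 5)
Your plan is correct and is essentially the paper's own route: the paper omits a detailed proof of this lemma, saying only that the derivation via the representation \eqnref{rep:v} and repeated reflections is ``quite similar'' to that of Lemma \ref{lemma:density:series}, and that is precisely the adaptation you carry out (density system from the jump relations, the reflection identities \eqnref{Scal:disk1}--\eqnref{Scal:disk2}, the sign flip coming from the Dirichlet/double-layer side which turns $(-\tau)^{n+1}$ into $\tau^{n+1}$ with alternating signs, convergence from $|\tau|<1$, \eqnref{ineq:hfrac} and boundedness of $\nabla H_d$, and uniqueness of the transmission problem to conclude). One small correction to your heuristic: the different formulas in $D$ and $\Om\setminus\overline{D}$ arise from the two cases of \eqnref{Scal:disk1} (the single-layer potential of $\p w/\p\nu|_-$ returns $-\tfrac12 w+{\rm const}$ inside $D$ but $-\tfrac12 R_{\p D}[w]+{\rm const}$ outside), not because ``$R_{\p D}[H_d]$ agrees with $H_d$ inside $D$,'' which is false as stated—reflection fixes boundary values on $\p D$, not the function in the interior.
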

\begin{cor} Suppose that $g_d\in C^{2,\delta}(\Om)$ for some $\delta>1/2$. If $k$ is bounded but not small, then $|\nabla v|$ is uniformly bounded in $\Omega$ independently of $(\ep,k)$.
\end{cor}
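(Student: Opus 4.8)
The plan is to mirror the proof of Corollary \ref{boundedness1} verbatim, replacing the single-layer setting for $u$ by the double/single-layer setting for $v$. First I would observe that the hypothesis ``$k$ bounded but not small'' again collapses to a uniform bound $|\tau|\le\tau_0<1$ for $\tau=\frac{k-1}{k+1}$. Next I would note that the assumption $g_d\in C^{2,\delta}(\Om)$ forces $H_d=2\mathcal{D}_{\p\Om}[g_d]\in C^2(\overline{\Om})$: indeed $H_d$ is harmonic in the disk $\Om$ with boundary trace $g_d$, and $g_d\in C^{2,\delta}(\p\Om)$ has an absolutely convergent Fourier series together with its first two derivatives (by Bernstein's theorem, exactly as invoked in Remark \ref{remark:series:regularity}), so the Poisson-extension series and its term-by-term derivatives up to order two converge uniformly on $\overline{\Om}$. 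In particular $\|H_d\|_{C^2(\Om)}<\infty$ independently of $\ep$.

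Then I would reproduce the reflection argument behind Lemma \ref{lem:vrep:ref}. Writing $v=\mathcal{D}_{\p\Om}[\varphi_e^d]+\Scal_{\p D}[\varphi_i^d]$ as in \eqnref{rep:v} and solving the corresponding transmission system, one gets for $\widetilde{v}$ (the $(x_0,0)$-translate) density formulas of the same shape as \eqnref{eq:density:i}--\eqnref{eq:density:e}, namely in bipolar coordinates
\beq\label{eq:density:v:cor}
\widetilde{\varphi}_i^d(\Bx(\xi_i,\theta))=\sum_{n=0}^\infty\tau^{n+1}\,\frac{h(\xi_i,\theta)}{h(\xi_{i,n},\theta)}\,\frac{\p\widetilde{H_d}}{\p\nu}(\Bx(\xi_{i,n},\theta)),\quad
\widetilde{\varphi}_e^d(\Bx(\xi_e,\theta))=\widetilde{g_d}+\sum_{n=0}^\infty\tau^{n+1}\,\frac{h(\xi_e,\theta)}{h(\xi_{e,n+1},\theta)}\,\frac{\p\widetilde{H_d}}{\p\nu}(\Bx(\xi_{e,n+1},\theta))
\eeq
(up to harmless constant factors), where I use \eqnref{ineq:hfrac} and $|\tau|<1$ together with the $\ep$-uniform bound on $\|\nabla\widetilde{H_d}\|_{L^\infty(B_e)}$ to guarantee uniform convergence, hence legitimacy of the term-by-term application of \eqnref{Scal:disk1}--\eqnref{Scal:disk2}. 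Differentiating these series term-by-term in the tangential variable $T$ and using $\|H_d\|_{C^2(\Om)}<\infty$ gives, exactly as in \eqnref{eq:Fourier:absConv},
\beq\label{eq:Fourier:absConv:v}
\sup_{\p B_i}\Big|\frac{d\varphi_i^d}{dT}\Big|,\ \sup_{\p B_e}\Big|\frac{d\varphi_e^d}{dT}\Big|\ \le\ C\Big(1+\sum_{m=1}^\infty m\,\tau_0^m\Big)\|H_d\|_{C^2(\Om)}<\infty,
\eeq
with constant independent of $(\ep,k)$. Finally, applying Bernstein's theorem \eqnref{ineq:Fourier:ufBd} to $t\mapsto\varphi_i^d(c_i+r_ie^{it})$ and $t\mapsto\varphi_e^d(c_e+r_ee^{it})$ bounds the absolute Fourier series of each density by the $(\ep,k)$-independent right-hand side of \eqnref{eq:Fourier:absConv:v}; since $\|\nabla v\|_{L^\infty(\Om)}$ is controlled by the sum of those absolute Fourier series, the corollary follows.

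The only genuinely new point compared with Corollary \ref{boundedness1}, and the step I expect to need the most care, is the boundedness of the double-layer contribution $\mathcal{D}_{\p\Om}[\varphi_e^d]$ to $\nabla v$: one must check that a $C^1$ density with $\ep$-uniformly bounded tangential derivative on the fixed circle $\p\Om$ yields an $\ep$-uniformly bounded gradient of its double-layer potential inside $\Om$. This is where the extra degree of smoothness $g_d\in C^{2,\delta}$ (one more than $g\in C^{1,\delta}$) is used; concretely one writes $\mathcal{D}_{\p\Om}[\varphi_e^d]$ in Fourier series on the disk $\Om$ and observes that its gradient is again controlled by $\sum_n|n|\,|\widehat{\varphi_e^d}(n)|$, which is finite and $(\ep,k)$-uniform by the Bernstein estimate applied to $d\varphi_e^d/dT$. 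Everything else is a line-by-line transcription of the $u$-case, which is why the statement says the proof is omitted.
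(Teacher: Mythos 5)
Your overall strategy is the one the paper intends (the paper omits this proof precisely because it is meant to parallel Corollary \ref{boundedness1}), and the first steps are fine: the reduction of the hypothesis to $|\tau|\le\tau_0<1$, and the deduction $H_d\in C^2(\overline{\Om})$ from $g_d\in C^{2,\delta}(\p\Om)$, $\delta>1/2$, via Bernstein's theorem. The gap is in the step you yourself flag as the delicate one. For the double-layer part of \eqnref{rep:v} you need $\sum_n|n|\,|\widehat{\varphi_e^d}(n)|$ to be finite uniformly in $(\ep,k)$, and you justify this by ``the Bernstein estimate applied to $d\varphi_e^d/dT$''. But Bernstein's theorem \eqnref{ineq:Fourier:ufBd} requires the function it is applied to to be H\"older continuous of order $>1/2$, and all you have established is a uniform sup bound on $d\varphi_e^d/dT$ (your analogue of \eqnref{eq:Fourier:absConv}). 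A sup bound on the first tangential derivative only makes $\varphi_e^d$ Lipschitz, so Bernstein yields $\sum_n|\widehat{\varphi_e^d}(n)|<\infty$ --- enough to control the gradient of a single-layer potential, as in Corollary \ref{boundedness1}, but not the gradient of the double-layer potential, which carries the extra factor $|n|$. As written, the argument does not close.

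To close it you must control one more tangential derivative of $\varphi_e^d$: either a uniform bound on $d^2\varphi_e^d/dT^2$, or a uniform $C^{0,\delta}$ bound on $d\varphi_e^d/dT$, and only then apply Bernstein to $d\varphi_e^d/dT$. This is where the precise form of the Dirichlet-case densities matters. Your guessed densities transcribe the Neumann-case formulas \eqnref{eq:density:i}--\eqnref{eq:density:e} verbatim, i.e.\ $h$-ratio--weighted \emph{normal derivatives} of $H_d$ at reflected points; with that form a second tangential derivative costs $\|H_d\|_{C^3}$, which $g_d\in C^{2,\delta}$ does not give. In the double-layer representation, the analogue of \eqnref{Scal:disk1}--\eqnref{Scal:disk2} produces a density built from \emph{boundary values} of $H_d$ and of its reflections (this is what Lemma \ref{lem:vrep:ref} reflects), so two tangential derivatives cost only $\|H_d\|_{C^2(\Om)}$, with the ratios $h(\xi_e,\theta)/h(\xi_{e,n+1},\theta)\le1$ from \eqnref{ineq:hfrac} absorbing the $\ep$-dependence; that is exactly why one extra derivative of $g_d$, compared with $g$, suffices. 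Alternatively, you can bypass Fourier analysis for the double-layer piece altogether by differentiating the series of Lemma \ref{lem:vrep:ref} term by term: each reflection evaluates $H_d$ at a point with larger $\xi$, so by \eqnref{ineq:hfrac} the gradient of each term is bounded by $2\|\nabla H_d\|_{L^\infty(\Om)}$ and the series is dominated by $\sum_n\tau_0^{n+1}$, uniformly in $(\ep,k)$.
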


As for the Dirichlet-type boundary condition, we set \beq\label{def:c3c4} C_1:=-\pd{H_d}{\nu}(0,0)=\pd{H_d}{x_1}(0,0)\quad \mbox{and}\quad C_2:=-\pd{H_d}{T}(0,0)=\pd{H_d}{x_2}(0,0),\eeq
where $H_d$ is defined by \eqnref{def:H}. Lemma \ref{coshsum} proves the following lemma, which states that only the linear term of $H_d$ contributes to the blow-up of $\nabla v$.

\begin{lemma}\label{lemma:boundedness:d} Suppose that $g_d\in C^{2,\delta}(\p\Om)$. If $C_1=C_2=0$, then we have
		\beq\label{eq:lindep:c3c4}\lVert \nabla v\rVert_{L^\infty (\Om)} \le C\eeq
		for some constant $C$ independent of $(\ep,k)$.
	\end{lemma}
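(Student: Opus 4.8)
The plan is to mirror the proof of Lemma~\ref{lemma:boundedness} (whose details are deferred to the appendix via \cite[Lemma 3.6]{imageCharge2018}), substituting $H_d$ for $H$ and using the series representation of $v$ from Lemma~\ref{lem:vrep:ref} in place of the one for $u$. First I would record that, under the hypothesis $g_d\in C^{2,\delta}(\p\Om)$ with $\delta>1/2$, the function $H_d=2\mathcal{D}_{\p\Om}[g_d]$, being the harmonic extension of $g_d$ into the disk $\Om$, lies in $C^2(\overline{\Om})$; this is the Dirichlet-problem analogue of the regularity statement in Remark~\ref{remark:series:regularity}, and again follows from Bernstein's theorem applied to the Fourier series of $g_d$ on $\p\Om$. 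Consequently the densities $\varphi_i^d$ and $\varphi_e^d$ in \eqnref{rep:v}, obtained by the same term-by-term reflection argument as in Lemma~\ref{lemma:density:series}, are $C^1$ on $\p B_i$ and $\p B_e$ with $dT$-derivatives bounded by $C\,\|H_d\|_{C^2(\Om)}$ uniformly in $(\ep,k)$, exactly as in \eqnref{eq:Fourier:absConv}.

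Next I would exploit the vanishing of the linear part: since $H_d\in C^2(\overline{\Om})$ and $C_1=C_2=0$ means $\nabla H_d(0,0)=0$, Taylor's theorem with the fact that the translation shift satisfies $x_0=O(\ep)$ gives the pointwise bound
\beq\label{eq:Hd:bigO}
\nabla H_d(\Bx-(x_0,0))=O(|\Bx|+\ep),\qquad \Bx\in B_e,
\eeq
which is the precise analogue of \eqnref{eq:Htild:bigO}. The series for $v$ in Lemma~\ref{lem:vrep:ref} involves the same reflected quantities $(R_{\p\Om}R_{\p D})^n H_d$ and $R_{\p D}(R_{\p\Om}R_{\p D})^n H_d$ as the series for $u$ in Lemma~\ref{lemma:density:series} (only the signs and the $H$ versus $H_d$ differ), so after passing to bipolar coordinates the gradient of $v$ is controlled, term by term, by sums of the form $\sum_n |\tau|^n \frac{h(\xi_i,\theta)}{h(\xi_{i,n},\theta)} |\nabla H_d(\Bx(\xi_{i,n},\theta))|$ and the corresponding $\xi_e$-sum. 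Using \eqnref{eq:Hd:bigO} together with $|\Bx(\xi_{i,n},\theta)|\to 0$ along the reflected poles, the factor $|\nabla H_d|$ supplies an extra decay of order $|\Bx(\xi_{i,n},\theta)|+\ep$, and Lemma~\ref{coshsum} then bounds $\sum_{m\le N_\ep}\frac{h(\xi_i,\theta)}{h(\xi_{i,m},\theta)}$ by $C/|\Bx(\xi_i,\theta)|$ for $|\theta|>\pi/2$, so the two effects cancel and the sum stays $O(1)$; for $|\theta|\le\pi/2$ the factors $h(\xi_i,\theta)/h(\xi_{i,m},\theta)$ are bounded away from the singular regime and the geometric decay $|\tau|^n$ alone suffices. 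The tail $n>N_\ep$ is handled by \eqnref{ineq:hfrac} and $|\tau|<1$, giving an exponentially small remainder. Assembling the $\xi_i$ and $\xi_e$ contributions yields \eqnref{eq:lindep:c3c4} with $C$ depending only on $\|H_d\|_{C^2(\Om)}$, hence on $\|g_d\|_{C^{2,\delta}(\p\Om)}$, and independent of $(\ep,k)$.

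The main obstacle is the same as in the $u$-case, and is exactly what Lemma~\ref{coshsum} is designed to overcome: near $\theta=\pi$ (the narrow-gap region) the individual ratios $h(\xi_i,\theta)/h(\xi_{i,m},\theta)$ are $O(1)$ rather than small, so a naive term-by-term estimate only gives $O(N_\ep)=O(\ep^{-1/2})$, which blows up. The resolution is that when $C_1=C_2=0$ the amplitude $|\nabla H_d|$ evaluated at the reflected points $\Bx(\xi_{i,m},\theta)$ is itself of order $|\Bx(\xi_{i,m},\theta)|$, and the weighted geometric-type sum in Lemma~\ref{coshsum} converts $\sum_m h(\xi_i,\theta)/h(\xi_{i,m},\theta)$ into something comparable to $1/|\Bx(\xi_i,\theta)|$, so that the product telescopes to a bounded quantity; one must be careful that the $|\tau|^n$ weights are kept (they only help) and that the bound from Lemma~\ref{coshsum}, valid for $m\le N_\ep$, is matched with a separate exponentially small estimate for the tail. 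Since every ingredient — the $C^2$ regularity of $H_d$, the uniform convergence and $C^1$ bounds on the densities, the reflection series, and Lemma~\ref{coshsum} — is already available in or immediately parallel to the $u$-case, the proof is a direct transcription of the argument for Lemma~\ref{lemma:boundedness}; accordingly I would state that the proof proceeds verbatim as in \cite[Lemma 3.6]{imageCharge2018} with $H$ replaced by $H_d$ and refer to Appendix~\ref{proof:boundedness} for the remaining computations.
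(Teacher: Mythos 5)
Your proposal is correct and takes essentially the same route as the paper, which establishes this lemma only by analogy with Lemma \ref{lemma:boundedness}: the $C^{2,\delta}$ Dirichlet data give $H_d\in C^2(\overline{\Om})$ via Bernstein's theorem, the hypothesis $C_1=C_2=0$ gives $\nabla H_d(\Bx-(x_0,0))=O(|\Bx|+\ep)$, and the reflected series from Lemma \ref{lem:vrep:ref} is then bounded uniformly in $(\ep,k)$ exactly as in Appendix \ref{proof:boundedness} using Lemma \ref{coshsum}. One small correction to your sketch: for $|\theta|\le\pi/2$ the factor $|\tau|^n$ alone does not suffice uniformly (as $k\to\infty$ one has $|\tau|\to1$, so $\sum_{m\le N_\ep}|\tau|^m$ is not $O(1)$); there, as in the appendix, you must again pair the $O(\ep^{-1/2})$ term count with the smallness $|\nabla H_d(\Bx(\xi_{i,m},\theta))|=O(\alpha+\ep)=O(\sqrt{\ep})$, reserving the exponential decay of $h(\xi_i,\theta)/h(\xi_{i,m},\theta)$ for the tail $m>N_\ep$.
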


\section{Image charge formula for the gradient blow-up term}
In this section, we directly find the asymptotic solution in the Fourier basis corresponding to the bipolar coordinate system using the results in Section 3. Throughout the rest of the article, $\Om$ and $D$ are relocated to avoid lengthy notation, as
\beq\label{def:trans:domains}\Omega=\{\mathbf{x}(\xi,\theta):\ \xi=\xi_e\}\quad\mbox{and}\quad D=\{\mathbf{x}(\xi,\theta):\ \xi=\xi_i\}.\eeq
Since $\nabla u$ turns out to blow up more frequently than $\nabla v$, we start with estimating $ v$ for simplicity.

We conclude from Lemma \ref{lemma:boundedness} and Lemma \ref{lemma:boundedness:d} that only the linear terms in $H$ and $H_d$ contribute to the gradient blow-up. Hence, as we approximate $\nabla v$ in $O(1)$ uniformly in $(\ep,k)$, we may assume that 
\beq\label{Hd:linear}H_d(x_1,x_2)=C_0+C_1x_1+C_2x_2 \quad\mbox{in }\overline{\Om}
\eeq
for some constant $C_0$, without loss of generality. Note that the constant numbers $C_1$ and $C_2$ defined by \eqnref{def:c3c4} does not depend on $\ep$.

	\subsection{Integral representation of $v$}
The linear functions $x_1$ and $x_2$ admits the following expansion in bipolar coordinates:
\begin{align*}
x_1&=\alpha\left[1+2\sum_{n=1}^\infty (-1)^ne^{-n\xi}\cos n\theta\right],\\
x_2&=-2\alpha\sum_{n=1}^\infty (-1)^n e^{-n\xi}\sin n\theta.
\end{align*}
In addition, the general solution for the problem $\Delta f=0$ is derived from separation of variables:
\[f(\xi,\theta)=a_0+b_0\xi+\sum_{n=1}^\infty\left[(a_ne^{n\xi}+b_ne^{-n\xi})\cos n\theta+(c_ne^{n\xi}+d_ne^{-n\xi})\sin n\theta\right].\]
One can readily obtain the following lemma using the relation \eqnref{N:bipolar} of normal derivatives.
\begin{lemma}\label{lemma:tu:series}
 The solution ${v}$ to \eqnref{e0} with $v=H_d$ on $\p \Om$ for $H_d$ defined in \eqnref{Hd:linear} satisfies
\beq\label{eqn:tu:series}
v(\Bx)=C_1\left(x_1+\Re\{F(\mathbf{x})\}\right)
+C_2\left(x_2+\Im\{F(\mathbf{x})\}\right),
\eeq
where $F$ is defined in $\Om$ by
\begin{equation}\label{e5}
F\left(\mathbf{x}(\xi,\theta)\right)={\rm{const.}}+\begin{cases}
\ds\sum_{n=1}^\infty\left(A_ne^{n(\xi-i\theta-2\xi_i)}+B_ne^{n(-\xi-i\theta)}\right)&\mbox{for }\xi_e<\xi<\xi_i,\\[2mm]
\ds\sum_{n=1}^\infty\left(A_ne^{n(-\xi-i\theta)}+B_ne^{n(-\xi-i\theta)}\right)&\mbox{for }\xi>\xi_i
\end{cases}\end{equation}
with
\beq\label{def:AnBn} A_n=\frac{-2\alpha(-1)^n\tau}{1-\tau e^{-2n(\xi_i-\xi_e)}}\quadand B_n=\frac{2\alpha(-1)^n\tau e^{-2n(\xi_i-\xi_e)}}{1-\tau e^{-2n(\xi_i-\xi_e)}}.\eeq
\end{lemma}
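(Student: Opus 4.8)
The plan is to solve the transmission problem \eqnref{e0} for $v$ with the linear Dirichlet data \eqnref{Hd:linear} directly in bipolar coordinates, exploiting that both $x_1$ and $x_2$ have clean separated-variable expansions there. First I would split by linearity: since $H_d=C_0+C_1x_1+C_2x_2$ and the equation and interface conditions are linear, it suffices to solve the two problems with boundary data $x_1$ and $x_2$ separately and superpose; the constant $C_0$ contributes only to the ``const.'' term. Because the reflection $R_{\p\Om}R_{\p D}$ in bipolar coordinates acts by $\xi\mapsto\xi-2(\xi_i-\xi_e)$ on $\xi$-level curves (via \eqnref{reflection:bipolar} and \eqnref{def:xik}), the series representation of Lemma \ref{lem:vrep:ref} applied to the harmonic polynomials $x_1,x_2$ turns into a geometric-type sum in the Fourier modes $e^{\pm n\xi}e^{\pm in\theta}$, which is exactly how the closed forms \eqnref{def:AnBn} with denominator $1-\tau e^{-2n(\xi_i-\xi_e)}$ should emerge.

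The key steps, in order: (i) write $x_1$ and $x_2$ in the stated bipolar series and observe that the harmonic extension $v$ must take the general separated form $a_0+b_0\xi+\sum_n[(a_ne^{n\xi}+b_ne^{-n\xi})\cos n\theta+\cdots]$ on each annular region $\xi_e<\xi<\xi_i$ and $\xi>\xi_i$, with different coefficients on the two regions; (ii) impose continuity of $v$ across $\xi=\xi_i$ (the condition $v|_+=v|_-$) and the jump condition $\p v/\p\nu|_+=k\,\p v/\p\nu|_-$ there, using \eqnref{N:bipolar} to convert $\p/\p\nu$ to $-\mathrm{sgn}(\xi_0)h\,\p/\p\xi$ so the scale factors $h$ cancel and one gets a clean algebraic relation between the coefficients on the two sides, mode by mode; (iii) impose the Dirichlet condition $v=x_1$ (resp. $x_2$) on $\xi=\xi_e$; (iv) solve the resulting $2\times2$ linear system in each Fourier mode $n$, which produces the coefficients $A_n,B_n$ with the factor $\tau/(1-\tau e^{-2n(\xi_i-\xi_e)})$; (v) repackage the two real series (from the $x_1$- and $x_2$-problems) into a single complex function $F$ so that the $x_1$-part is $\Re F$ and the $x_2$-part is $\Im F$, using that $x_1=\Re z$, $x_2=\Im z$ and that the relevant modes pair as $e^{-n\xi}(\cos n\theta\mp i\sin n\theta)=e^{n(-\xi\mp i\theta)}$; the phase shift $e^{-2n\xi_i}$ in the first branch of \eqnref{e5} records that on the inner region the ``incoming'' part of the reflected series is anchored at $\xi_i$ rather than at $\xi_e$. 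Convergence of all series for $\xi\ge\xi_e$ is automatic since $|\tau|<1$ and $\xi_i>\xi_e>0$, so each term decays geometrically; this also justifies the term-by-term manipulations, and it is essentially the same estimate already used in Lemma \ref{lemma:density:series} and Remark \ref{remark:series:regularity}.

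Alternatively — and this is probably the cleanest write-up — one can \emph{verify} the claimed formula rather than derive it: take $v$ as defined by \eqnref{eqn:tu:series}--\eqnref{def:AnBn}, check that $x_1+\Re F$ and $x_2+\Im F$ are harmonic in $D$ and in $\Om\setminus\overline D$ (each summand is a harmonic function of $(\xi,\theta)$ away from the poles, and the poles $\Bp_1,\Bp_2=(\mp\alpha,0)$ lie outside $\overline\Om$ when $\xi\ge\xi_e>0$), check the two interface conditions at $\xi=\xi_i$ by matching Fourier coefficients (here the precise form of $A_n,B_n$ is used, and the identity $A_ne^{-2n\xi_i}+B_n = $ the $e^{-n\xi}$-coefficient must be consistent across the two branches), and check $v=H_d$ on $\xi=\xi_e$ by summing the bipolar series for $x_1,x_2$; then invoke the uniqueness of the weak solution stated after \eqnref{e0}. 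Either route reduces the lemma to bookkeeping.

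The main obstacle I anticipate is purely organizational: keeping the four unknown coefficient sequences straight across the two regions and getting the matching conditions at $\xi=\xi_i$ exactly right — in particular tracking the constant/$\xi$-linear mode ($n=0$), which is why the statement only asserts the $n\ge1$ part explicitly and absorbs the rest into ``const.'', and making sure the $h$-factors from \eqnref{N:bipolar} cancel cleanly so that the jump condition really is mode-diagonal. There is also a small subtlety in checking that $F$ is well-defined (single-valued) on $\Om$: the two branch expressions in \eqnref{e5} must agree on $\xi=\xi_i$, which is the identity $A_ne^{n(\xi_i-i\theta-2\xi_i)}+B_ne^{-n(\xi_i+i\theta)} = (A_n+B_n)e^{-n(\xi_i+i\theta)}$ — trivially true — so continuity of $F$ is automatic, but continuity of its \emph{normal derivative} across $\xi=\xi_i$ is exactly where the transmission coefficient $\tau$ enters and must be confirmed.
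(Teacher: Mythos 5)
Your main route (i)--(v) --- expand $x_1,x_2$ in the bipolar Fourier basis, take the separated-variable harmonic ansatz in each region, and match modes at $\xi=\xi_e$ and $\xi=\xi_i$ using \eqnref{N:bipolar} so that the scale factor $h$ cancels --- is exactly the paper's (very terse) proof of this lemma, and the mode-by-mode bookkeeping does yield \eqnref{def:AnBn}. One correction to the parenthetical in your verification variant: after the relocation \eqnref{def:trans:domains} the pole $\Bp_2=(\alpha,0)$ lies \emph{inside} $D$ (it is the point $\xi=+\infty$), so harmonicity of the inner branch follows not from the poles lying outside $\overline{\Om}$ but from the fact that only the modes $e^{n(-\xi-i\theta)}=\left(\frac{\alpha-z}{\alpha+z}\right)^n$, which are holomorphic at $\Bp_2$, appear for $\xi>\xi_i$ --- the same regularity requirement at $\Bp_2$ that justifies discarding the growing modes inside $D$ and reduces your per-mode system to $2\times2$.
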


%

\smallskip

Next, we approximate the series \eqnref{e5} using integrals. For convenience, we set 
\beq
\beta=\frac{r_*(-\ln |\tau|)}{4\sqrt{\ep}}\quad\mbox{whenever }k>1.
\eeq
\begin{definition}
Whenever $k>1$, we define a singular function $q_d:\Om\setminus\p D\to\mathbb{C}$ by
\beq
q_d(\Bx(\xi,\theta);\beta)=\begin{cases}
\ds L\left(e^{-(2\xi_i-2\xi_e+\xi)-i\theta};\beta\right)-L\left(e^{-(2\xi_i-\xi)-i\theta};\beta\right)\quad&\mbox{for }\xi_e<\xi<\xi_i,\\[2mm]
\ds L\left(e^{-(2\xi_i-2\xi_e+\xi)-i\theta};\beta\right)-L\left(e^{-\xi-i\theta};\beta\right)\quad&\mbox{for }\xi>\xi_i,
\end{cases}
\eeq
where $L(z;\beta)$ denotes the Lerch transcendent function
\beq
L(z;\beta)=-\int_0^\infty\frac{ze^{-(\beta+1)t}}{1+ze^{-t}}\,dt\quad\mbox{for }z\in\mathbb{C},\ |z|<1,\ \beta>0.
\eeq
We also set
\beq
P(z;\beta)=-z\frac{\partial}{\partial z}L(z;\beta)=\int_0^\infty\frac{ze^{-(\beta+1)t}}{(1+ze^{-t})^2}\,dt.
\eeq

\end{definition}

In the following, we approximate the series \eqnref{e5} in terms of $q_d$. We refer the readers to the appendix for the detailed proof of Lemma \ref{thm:singularLT}. To provide the crucial idea in proof of Lemma \ref{thm:singularLT}, we state Lemma \ref{lem:approxP} right before Lemma \ref{thm:singularLT}.

\begin{lemma}{\rm(\cite[Lemma 2.4]{LY:2015:ASCE})}\label{lem:approxP}
Fix $\xi_0>0$ and $0<|\tau|<1$, (i.e., $0<k\ne1$.) Then for each $\xi<\xi_0$ and $-\pi<\theta\le\pi$,
\beq\label{ineq:simple:kg1}\left|\xi_0\sum_{m=1}^\infty\left(|\tau|^{m-1}\frac{e^{-m\xi_0+\xi+i\theta}}{(1+e^{-m\xi_0+\xi+i\theta})^2}\right)-P\left(e^{-(\xi_0-\xi)+i\theta};\frac{-\ln|\tau|}{\xi_0}\right)\right|\le\frac{8\xi_0}{\cosh(\xi_0-\xi)+\cos\theta}\eeq
and
\beq\label{ineq:simple:kl1}\left|\xi_0\sum_{m=1}^\infty\left((-|\tau|)^{m-1}\frac{e^{-m\xi_0+\xi+i\theta}}{(1+e^{-m\xi_0+\xi+i\theta})^2}\right)\right|\le\frac{8\xi_0}{\cosh(\xi_0-\xi)+\cos\theta}.\eeq
\end{lemma}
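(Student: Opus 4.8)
The plan is to establish the two inequalities in Lemma \ref{lem:approxP} by recognizing the left-hand sides as Riemann-type sums for the integral defining $P$, and controlling the discretization error uniformly in $\theta$. First I would record the closed form coming from the integral representation: a change of variables $t = m\xi_0 - \xi$ (equivalently, writing $e^{-t}$ in place of the summation index) shows that
\[
P\left(e^{-(\xi_0-\xi)+i\theta};\beta\right)=\int_0^\infty\frac{e^{-(\xi_0-\xi)+i\theta}\,e^{-(\beta+1)t}}{\left(1+e^{-(\xi_0-\xi)+i\theta}e^{-t}\right)^2}\,dt,
\]
and with $\beta = \frac{-\ln|\tau|}{\xi_0}$ the factor $e^{-\beta t}$ evaluated at $t=(m-1)\xi_0$ is exactly $|\tau|^{m-1}$. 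Thus the sum $\xi_0\sum_{m\ge1}|\tau|^{m-1}\frac{e^{-m\xi_0+\xi+i\theta}}{(1+e^{-m\xi_0+\xi+i\theta})^2}$ is precisely the left-endpoint Riemann sum, with mesh $\xi_0$, of the integrand $G(t):=\frac{e^{-(\xi_0-\xi)+i\theta}e^{-(\beta+1)t}}{(1+e^{-(\xi_0-\xi)+i\theta}e^{-t})^2}$ sampled at the nodes $t=(m-1)\xi_0$. So \eqref{ineq:simple:kg1} reduces to the statement that this Riemann sum differs from $\int_0^\infty G(t)\,dt$ by at most $\frac{8\xi_0}{\cosh(\xi_0-\xi)+\cos\theta}$.

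Next I would bound the Riemann-sum error in the standard way: on each subinterval $[(m-1)\xi_0, m\xi_0]$ the error is at most $\xi_0\sup|G'|$ on that interval, so the total error is at most $\xi_0\int_0^\infty|G'(t)|\,dt$ once one checks $G'$ does not change sign too wildly — more carefully, one compares the sum to $\int G$ and estimates $\sum_m \int_{(m-1)\xi_0}^{m\xi_0}|G(t)-G((m-1)\xi_0)|\,dt \le \xi_0\int_0^\infty |G'(t)|\,dt$. The key computation is then a bound on $\int_0^\infty|G'(t)|\,dt$. Differentiating $G$ produces two terms, one with a single power of $(1+e^{-(\xi_0-\xi)+i\theta}e^{-t})$ in the denominator and one with a cube; the crucial point is the denominator estimate
\[
\left|1+e^{-(\xi_0-\xi)+i\theta}e^{-t}\right|\ge \text{(something like)}\ \frac{\cosh(\xi_0-\xi+t)+\cos\theta}{\text{bounded factor}},
\]
which is a direct consequence of the algebraic identity $|1+re^{i\theta}|^2 = 1+2r\cos\theta+r^2$ with $r=e^{-(\xi_0-\xi)+t\cdot(-1)}$ rewritten via $\cosh$. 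This is the same denominator that appears on the right-hand side of \eqref{ineq:simple:kg1} (with $t=0$), and bounding the resulting integral — which is monotone in $t$ after the denominator estimate — yields the constant $8$. For \eqref{ineq:simple:kl1} the argument is identical except that now $\beta$ is replaced by a purely formal alternating weight: $(-|\tau|)^{m-1} = e^{-\beta(m-1)\xi_0}e^{i\pi(m-1)}$, so the sum is the left-endpoint Riemann sum of $\widetilde{G}(t):=e^{i\pi t/\xi_0}G(t)$, which has no integral to compare against but whose Riemann sum is still bounded by $\xi_0\int_0^\infty|\widetilde G'(t)|\,dt$ plus $\xi_0\sup|\widetilde G|$-type terms — here the oscillating factor forces cancellation and one simply bounds the whole sum in absolute value by the same integral estimate.

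Since this lemma is quoted verbatim from \cite[Lemma 2.4]{LY:2015:ASCE}, the cleanest route is in fact to cite that reference and only sketch the above Riemann-sum reduction for the reader's orientation; the \emph{main obstacle}, were one to reprove it in full, is the uniform-in-$\theta$ control of $\int_0^\infty|G'(t)|\,dt$ when $\theta$ is near $\pm\pi$, i.e.\ when the denominator $|1+re^{i\theta}|$ can be as small as $|1-r|$ and the naive bound blows up — one must keep the $\cosh(\xi_0-\xi)+\cos\theta$ combination intact throughout rather than bounding $\cos\theta$ by $1$, and verify that the $e^{-(\beta+1)t}$ decay (together with $\xi_0-\xi>0$, so $r<1$ strictly) compensates so that the constant stays an absolute $8$ independent of $\xi$, $\theta$, $\xi_0$ and $\tau$.
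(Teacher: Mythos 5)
The paper itself gives no proof of this lemma: it is imported verbatim from \cite{LY:2015:ASCE} (Lemma 2.4), so your recommendation to cite is exactly what the authors do, and your reading of \eqnref{ineq:simple:kg1} as a left-endpoint Riemann sum is the right reconstruction of the cited argument. Indeed, with $z=e^{-(\xi_0-\xi)+i\theta}$, $t_m=(m-1)\xi_0$ and $\beta=-\ln|\tau|/\xi_0$, the summand is $G(t_m)$ for $G(t)=\frac{ze^{-(\beta+1)t}}{(1+ze^{-t})^2}$, the discretization error is at most $\xi_0\int_0^\infty|G'(t)|\,dt$, and the identity $|1+e^{-(s+t)+i\theta}|^2=2e^{-(s+t)}(\cosh(s+t)+\cos\theta)$ with $s=\xi_0-\xi$ keeps the $\cosh+\cos$ combination intact as you insist; the two terms of $G'$ then integrate to $\frac{1}{2(\cosh s+\cos\theta)}$ (using $\int_0^\infty\beta e^{-\beta t}dt=1$) and, after the substitution $v=\sinh\bigl(\tfrac{s+t}{2}\bigr)$, to at most $\frac{1}{\cosh s+\cos\theta}$, so the constant $8$ is safe uniformly in $\theta$, including near $\pm\pi$.

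The weak point is your treatment of \eqnref{ineq:simple:kl1}. You write that ``one simply bounds the whole sum in absolute value by the same integral estimate,'' but taking absolute values term by term discards precisely the cancellation this inequality needs: with $w_m=e^{-m\xi_0+\xi+i\theta}$ one gets $\sum_m|\tau|^{m-1}\frac{|w_m|}{|1+w_m|^2}\le\frac{2}{\cosh s+\cos\theta}\cdot\frac{1}{1-|\tau|e^{-\xi_0}}$, and in the regime this paper actually uses the lemma ($\xi_0\sim\sqrt{\ep}\to0$ together with $|\tau|\to1$) the factor $(1-|\tau|e^{-\xi_0})^{-1}$ is unbounded, so no absolute constant can come out this way. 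The alternation is essential: pair consecutive terms, writing $b_m=|\tau|^{m-1}f(t_m)$ with $f(t)=\frac{ze^{-t}}{(1+ze^{-t})^2}$, bound $|b_{2j-1}-b_{2j}|\le|\tau|^{2j-2}\bigl(\int_{t_{2j-1}}^{t_{2j}}|f'|\,dt+(1-|\tau|)|f(t_{2j})|\bigr)$, and sum in $j$; the $(1-|\tau|)$ factor is absorbed by the geometric series and the derivative piece by the same estimate $\int_0^\infty|f'|\,dt\le\frac{1}{\cosh s+\cos\theta}$ used above. Your remark that ``the oscillating factor forces cancellation'' points in the right direction, but the step you actually propose does not use it, and as stated it would fail exactly where the lemma is needed.
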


%

%

\begin{lemma}\label{thm:singularLT}
Suppose $g_d\in C^{2,\delta}(\p\Om)$ for some $\delta>1/2$. The solution $v$ to \eqnref{e0} satisfies the following.
\begin{itemize}
\item[\rm(a)] For $0<k<1$, $\|\nabla v\|_{L^\infty(\Om)}$ is bounded independently of $\ep$ and $k$. 

\item[\rm(b)] For $1<k<\infty$, $v$ satisfies
\beq
v(\Bx)= \frac{r_*^2\tau}{2} \big[C_1\Re\{ q_d(\Bx;\beta)\}+ C_2\Im\{q_d(\Bx;\beta)\}\big]+r(\Bx),
\eeq
where $\|\nabla r\|_{L^\infty(\Om)}$ is bounded independently of $\ep$ and $k$.
\end{itemize}
\end{lemma}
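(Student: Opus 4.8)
The plan is to start from the explicit Fourier-type series representation of $v$ in bipolar coordinates furnished by Lemma~\ref{lemma:tu:series}, namely
$v = C_1(x_1 + \Re F) + C_2(x_2 + \Im F)$ with $F$ as in \eqnref{e5}--\eqnref{def:AnBn}, and to compare the series $F$ with the Lerch-type integral $q_d$ term by term. Since Lemma~\ref{lemma:boundedness:d} already reduces the problem to the linear part of $H_d$ up to a gradient error of size $O(1)$ uniform in $(\ep,k)$, it suffices to control the difference $F(\Bx) - \tfrac{r_*^2\tau}{2}q_d(\Bx;\beta)$ (and its $x_1$, $x_2$ analogue via $\Re,\Im$) in $C^1$ uniformly in $(\ep,k)$. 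Part~(a), the case $0<k<1$, is the easier one: here $\tau<0$, so $|\tau|^{m}$ alternates in sign in the relevant coefficients $A_n,B_n$, and the bound \eqnref{ineq:simple:kl1} of Lemma~\ref{lem:approxP} shows the corresponding series are already $O(1)$ without needing any integral approximation; one differentiates \eqnref{e5} term by term (justified by $|\tau|<1$ and $H_d\in C^2$ as in Remark~\ref{remark:series:regularity}) and sums, using $\xi\ge\xi_e$ and the scale factor bounds, to conclude $\|\nabla v\|_\infty = O(1)$.

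For part~(b) with $1<k<\infty$, I would proceed as follows. First, rewrite $A_n$ and $B_n$ in \eqnref{def:AnBn} using the geometric series $\frac{1}{1-\tau e^{-2n(\xi_i-\xi_e)}} = \sum_{m=0}^\infty \tau^m e^{-2mn(\xi_i-\xi_e)}$, so that $F$ becomes a double sum over $n\ge 1$ and $m\ge 0$; collecting the exponential factors, each piece of $F$ is, up to the factor $-2\alpha(-1)^n\tau$ (resp.\ $2\alpha(-1)^n\tau e^{-2n(\xi_i-\xi_e)}$), a sum $\sum_{n,m} (-1)^n \tau^m e^{-n[\,\cdots\,]}$ whose inner $n$-sum is of exactly the shape $\sum_{n\ge 1}(-1)^n \tau^m e^{-n(\cdots)}$ appearing implicitly when one differentiates $L$. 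The key identity is that $-z\partial_z L(z;\beta) = P(z;\beta)$ and that $P$ is the integral analogue of $\sum_{m\ge 1}|\tau|^{m-1}\frac{e^{-m\xi_0+\xi+i\theta}}{(1+e^{-m\xi_0+\xi+i\theta})^2}$, per Lemma~\ref{lem:approxP} with $\xi_0 = 2(\xi_i-\xi_e)$ and $\frac{-\ln|\tau|}{\xi_0} = \beta$ (using $\xi_i - \xi_e = \frac{\alpha}{r_i} + O(\ep^{3/2})\sim$ a multiple of $\sqrt\ep$, so that $\beta = \frac{r_*(-\ln|\tau|)}{4\sqrt\ep}$ matches $\frac{-\ln|\tau|}{\xi_0}$ to leading order). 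So the strategy is: express $\partial_\xi F$ and $\partial_\theta F$ as such series, substitute the three identifications of $\xi_0$-shifted exponentials that occur in the definition of $q_d$ (the terms $e^{-(2\xi_i-2\xi_e+\xi)-i\theta}$, $e^{-(2\xi_i-\xi)-i\theta}$, $e^{-\xi-i\theta}$), apply \eqnref{ineq:simple:kg1} to replace each $n$-series by the corresponding $P$, and recognize the result as $\tfrac{r_*^2\tau}{2}\partial q_d$ plus a remainder. The prefactor bookkeeping should produce $2\alpha \cdot \frac{1}{2(\xi_i-\xi_e)} \cdot (\text{something})$, and since $2\alpha\cdot\frac{r_*}{\ldots}$ and $\xi_i-\xi_e\sim\frac{\alpha}{r_i}$ combine — using $r_*^2 = \frac{2r_ir_e}{r_e-r_i}$ and $\alpha = r_*\sqrt\ep + O(\ep^{3/2})$ — one gets exactly the constant $\frac{r_*^2\tau}{2}$ after passing $\ep\to 0$ within the $O(1)$ tolerance.

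The error terms must then be bounded in sup-norm uniformly in $(\ep,k)$. There are three sources: (i) the error $\frac{8\xi_0}{\cosh(\xi_0-\xi)+\cos\theta}$ from each application of Lemma~\ref{lem:approxP}, which must be shown to stay $O(1)$ after being multiplied by the $\alpha$-prefactor and divided by the scale factor $h(\xi,\theta)^{-1} = \frac{\alpha}{\cosh\xi+\cos\theta}$ when one converts $\partial_\xi,\partial_\theta$ back to $\nabla$ via \eqnref{N:bipolar}--\eqnref{T:bipolar} — here the point is that $\frac{\alpha}{\cosh\xi+\cos\theta}\cdot\frac{\xi_0}{\cosh(\xi_0-\xi)+\cos\theta}$ is bounded because $\xi_0\sim\sqrt\ep\sim\alpha$ and one of the two cosh-denominators is bounded below away from the poles while near $\theta=\pm\pi$ both help; (ii) the discrepancy between $\beta$ and the exact exponent rate $\frac{-\ln|\tau|}{\xi_0}$ (since $\xi_i-\xi_e$ is only asymptotically $\frac{\alpha}{r_i}$), which should be handled by a mean-value/Lipschitz estimate on $L$ and $P$ in the $\beta$ variable, showing $|L(z;\beta)-L(z;\beta')|$ and the derivative analogue are controlled by $|\beta-\beta'|$ times a bounded factor, with $|\beta-\beta'| = O(\sqrt\ep)$; and (iii) the tail of the $n$-sum beyond $N_\ep = r_*/\sqrt\ep$, which is where Lemma~\ref{coshsum} enters to bound the truncated-scale-factor sums by $C/|\Bx(\xi,\theta)|$, exactly as in the proof of Lemma~\ref{lemma:boundedness}. \emph{The main obstacle} I anticipate is source (i) combined with (iii): making the interplay between the three distinct exponential scales ($\alpha\sim\sqrt\ep$, $\xi_i-\xi_e\sim\sqrt\ep$, and the running $\xi\in[\xi_e,\infty)$) precise enough that every prefactor-times-error product is genuinely bounded by a constant independent of both $\ep$ and $k$ near the critical region $\xi\approx\xi_e$, $|\theta|$ small, where $|\nabla v|$ is largest; this is the crux where the $O(1)$ approximation is tight, and it is essentially the Dirichlet-type analogue of \cite[Lemma 3.7 / Proposition 4.x]{imageCharge2018}, so I would model the estimate on that argument, carrying the extra double-layer factor through. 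I would relegate the lengthy but routine estimates to the appendix, as the statement already announces.
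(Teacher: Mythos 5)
Your proposal follows essentially the same route as the paper's own proof: differentiate the series \eqnref{e5}, expand $A_n,B_n$ of \eqnref{def:AnBn} as geometric series in $\tau$, collapse the $n$-sum via $\sum_{n\ge1}nz^n=z/(1-z)^2$, then approximate the remaining power series in $\tau$ by $P$ through Lemma \ref{lem:approxP} with $\xi_0=2(\xi_i-\xi_e)$ (part (a) from \eqnref{ineq:simple:kl1}, part (b) from \eqnref{ineq:simple:kg1}), and finish with \eqnref{e9}--\eqnref{e10}. One correction to your bookkeeping: $\xi_i-\xi_e=\frac{2\alpha}{r_*^2}+O(\ep^{3/2})$, not $\frac{\alpha}{r_i}$; it is this value that makes $\frac{-\ln|\tau|}{2(\xi_i-\xi_e)}$ agree with $\beta$ to leading order and the prefactor $\frac{2\alpha\tau}{2(\xi_i-\xi_e)}$ tend to $\frac{r_*^2\tau}{2}$, whereas your stated asymptotic would not.
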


	\subsection{Rate of gradient blow-up}

In order to simplify the representation in Lemma \ref{thm:singularLT} further, we introduce some properties of the function $P$, which directly follow from integration by parts:
\begin{equation}\label{e9}|P(e^{-s+i\theta};\beta)|\le\frac{1}{2\beta(\cosh s+\cos\theta)}\quad\forall s>0\end{equation}
and
\begin{equation}\label{e10}|P(e^{-s_2+i\theta};\beta)-P(e^{-s_1+i\theta};\beta)|\le\frac{s_2-s_1}{2(\cosh s_1+\cos\theta)}\quad\forall s_2>s_1>0.\end{equation}
Eq. \eqnref{e9} is used to derive the upper bound of stress concentration, and Eq. \eqnref{e10} simplifies the singular term of $\nabla v$.
As noticed in Section 1, we define the orthonormal basis $\{\mathbf{e}_\xi ,\mathbf{e}_\theta\}$ by
\beq\label{def:exet}
\mathbf{e}_\xi = \nabla\xi/|\nabla \xi|\quadand \mathbf{e}_\theta = \nabla\theta/|\nabla \theta|.
\eeq

\begin{theorem}\label{theo:grad} Suppose $g_d\in C^{2,\delta}(\p\Om)$ for some $\delta>1/2$. Then, $\|\nabla v\|_{L^\infty(D)}$ is bounded uniformly in $(\ep,k)$, whereas $\|\nabla v\|_{L^\infty(\Om\backslash\overline{D})}$ blows up only in the limits $\ep\to 0$ and $\tau\to1$, i.e., $k\gg1$. The gradient blow-up in $\Om\setminus\overline{D}$ is approximated by the asymptotic formulas as
\beq\begin{aligned}&\ds\nabla v(\mathbf{x}(\xi,\theta))\\[2mm]
&\ds=\frac{r_*\tau}{\sqrt{\ep}}(\cosh\xi+\cos\theta)\left[C_1\Re\left\{P\left(e^{-(2\xi_i-\xi)-i\theta};\beta\right)\right\}+C_2\Im\left\{P\left(e^{-(2\xi_i-\xi)-i\theta};\beta\right)\right\}\right]\mathbf{e}_\xi+O(1)\\[2mm]
&\ds=\frac{r_*\tau}{\sqrt{\ep}}(\cosh\xi+\cos\theta)\left[C_1\Re\left\{P\left(e^{-(\xi+2\xi_i)-i\theta};\beta\right)\right\}+C_2\Im\left\{P\left(e^{-(\xi+2\xi_i)-i\theta};\beta\right)\right\}\right]\mathbf{e}_\xi+O(1).\end{aligned}\eeq
\end{theorem}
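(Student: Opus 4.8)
The plan is to start from the integral representation of $v$ provided by Lemma \ref{thm:singularLT}. For $0<k<1$ part (a) of that lemma already gives uniform boundedness of $\nabla v$ in all of $\Om$, so we only need to treat $1<k<\infty$. In that regime Lemma \ref{thm:singularLT}(b) expresses $v$ as $\frac{r_*^2\tau}{2}\bigl[C_1\Re\{q_d\}+C_2\Im\{q_d\}\bigr]+r$ with $\|\nabla r\|_{L^\infty(\Om)}$ bounded uniformly in $(\ep,k)$, so the whole statement reduces to computing $\nabla$ of $\Re\{q_d\}$ and $\Im\{q_d\}$, i.e.\ of the two $L$-function terms in $q_d$, and showing (i) inside $D$ (where $\xi>\xi_i$) the gradient stays $O(1)$, and (ii) in $\Om\setminus\overline{D}$ (where $\xi_e<\xi<\xi_i$) the gradient is, up to $O(1)$, the claimed $\mathbf{e}_\xi$-term built from $P$.

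The key computational step is differentiation in the bipolar frame. Using \eqnref{N:bipolar}--\eqnref{T:bipolar} (equivalently \eqnref{def:exet} and the scale factor $h$ from \eqnref{eqn:h}), for a function written as $\Phi(\xi,\theta)$ one has $\nabla\Phi = h(\xi,\theta)(\partial_\xi\Phi\,\mathbf{e}_\xi+\partial_\theta\Phi\,\mathbf{e}_\theta)$, and $h(\xi,\theta)=(\cosh\xi+\cos\theta)/\alpha$ with $\alpha=r_*\sqrt{\ep}+O(\ep\sqrt\ep)$, which produces the prefactor $\frac{r_*}{\sqrt\ep}(\cosh\xi+\cos\theta)$ after we absorb $\alpha^{-1}$. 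Since each summand of $q_d$ has the form $L(e^{-s(\xi)-i\theta};\beta)$ with $s$ an affine function of $\xi$ of slope $\pm1$, and since $-z\,\partial_z L(z;\beta)=P(z;\beta)$, the $\xi$- and $\theta$-derivatives of such a term are both $\pm P(e^{-s-i\theta};\beta)$ up to sign and conjugation; combined with the $h$-factor this gives terms of size $\frac{r_*}{\sqrt\ep}(\cosh\xi+\cos\theta)\,P(\cdots)$, which by the bound \eqnref{e9} is $O\bigl(\frac{1}{\sqrt\ep\,\beta}\bigr)=O\bigl(\frac{1}{-\ln|\tau|}\bigr)$ — bounded once $\tau$ is bounded away from $1$. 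So blow-up can only occur as $\ep\to0$ together with $\tau\to1$, which is exactly the stated dichotomy. For the terms $L(e^{-(2\xi_i-2\xi_e+\xi)-i\theta};\beta)$ (present in both pieces of $q_d$) and, inside $D$, $L(e^{-\xi-i\theta};\beta)$: in these the exponent argument has modulus $e^{-s}$ with $s\ge 2\xi_i-2\xi_e\ge$ (a quantity comparable to $\xi_i$) or $s=\xi\ge\xi_i$, and since $\xi_i\sim\alpha/(\text{const})\sim\text{const}\cdot\sqrt\ep$ is \emph{small}, one must check these do not blow up; the point is that for these terms $2\beta s$ stays bounded below (indeed $\beta s\sim$ const), so \eqnref{e9} gives $\frac{r_*}{\sqrt\ep}(\cosh\xi+\cos\theta)|P|\le\frac{r_*}{\sqrt\ep}\cdot\frac{\cosh\xi+\cos\theta}{2\beta(\cosh s+\cos\theta)}$, and since $\cosh s+\cos\theta\ge c\,s^2\cdot(\text{something})$ is \emph{not} what saves us — rather, $\cosh\xi+\cos\theta\le 2$ when $\xi$ is small while $\beta\ge c/\sqrt\ep$ makes $\frac{1}{\sqrt\ep\,\beta}=O(1)$; hence these terms are uniformly $O(1)$. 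After discarding them, in $\Om\setminus\overline{D}$ the only surviving singular contribution comes from $-L(e^{-(2\xi_i-\xi)-i\theta};\beta)$, whose gradient is $\frac{r_*}{\sqrt\ep}(\cosh\xi+\cos\theta)\bigl[C_1\Re P+C_2\Im P\bigr]\mathbf{e}_\xi$ plus an $\mathbf{e}_\theta$-component of the same type; one then argues the $\mathbf{e}_\theta$-component is also $O(1)$ (its coefficient differs from the $\mathbf{e}_\xi$-one by a bounded factor but actually cancels against the contribution of $x_1,x_2$ and const., or is directly estimated) so that only the $\mathbf{e}_\xi$-term remains, giving the first displayed formula. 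The second displayed formula follows from the first by replacing $e^{-(2\xi_i-\xi)-i\theta}$ with $e^{-(\xi+2\xi_i)-i\theta}$: the difference of the two $P$-values is controlled by \eqnref{e10} with $s_2-s_1=2(\xi_i-\xi)+2\xi_i-2\xi_i\le 4\xi_i=O(\sqrt\ep)$, times the $\frac{r_*}{\sqrt\ep}$ prefactor, yielding an $O(1)$ discrepancy.

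I would organize the write-up as: (1) reduce to $k>1$ and invoke Lemma \ref{thm:singularLT}(b); (2) also carry along the contribution of the affine part $C_1 x_1+C_2 x_2$ from \eqnref{eqn:tu:series}, noting $\nabla(C_1x_1+C_2x_2)=O(1)$; (3) differentiate each $L$-term via $h\,\partial_\xi$, $h\,\partial_\theta$ and $P=-z\partial_z L$; (4) apply \eqnref{e9} to the ``harmless'' terms (the $2\xi_i-2\xi_e+\xi$ term and, in $D$, the $\xi$ and $-\xi$ terms) to see they are $O(1)$ uniformly — using $\frac{1}{\sqrt\ep\,\beta}=\frac{4}{-\ln|\tau|}\le$ const on the relevant range and, for the small-$\xi$ arguments, that $\cosh\xi+\cos\theta$ is bounded; (5) conclude $\nabla v$ on $D$ is $O(1)$; (6) on $\Om\setminus\overline{D}$ keep the single term $-L(e^{-(2\xi_i-\xi)-i\theta};\beta)$, compute its gradient, and show the $\mathbf{e}_\theta$-component is $O(1)$ so only the $\mathbf{e}_\xi$-term survives; (7) use \eqnref{e10} to swap $e^{-(2\xi_i-\xi)-i\theta}\leftrightarrow e^{-(\xi+2\xi_i)-i\theta}$ for the second formula.

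\emph{Main obstacle.} The delicate point is step (6): correctly pinning down \emph{which} component of the gradient of the surviving $L$-term is genuinely singular and verifying that the $\mathbf{e}_\theta$-part (and any leftover from the $O(1)$ replacement in Lemma \ref{thm:singularLT}, the affine part, and the ``harmless'' $L$-terms) really does combine to $O(1)$ uniformly in $(\ep,k)$. This requires keeping track of the $h$-factor interacting with $\partial_\theta$ of $\Re/\Im$ of a complex $P$-value — the real and imaginary parts couple the $\xi$- and $\theta$-derivatives — and using the sharp bounds \eqnref{e9}--\eqnref{e10} in the regime $\xi_i,\xi_e=O(\sqrt\ep)$, $\beta\ge c/\sqrt\ep$. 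Once this bookkeeping is done carefully the two asymptotic formulas drop out.
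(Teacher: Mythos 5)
Your overall skeleton (reduce to $k>1$ via Lemma \ref{thm:singularLT}, differentiate the $L$-terms of $q_d$ through $P=-z\partial_z L$ in the bipolar frame, and control everything with \eqnref{e9}--\eqnref{e10}) is the same route as the paper's, but there is a genuine error at the heart of your steps (4)--(6): the term $L\left(e^{-(2\xi_i-2\xi_e+\xi)-i\theta};\beta\right)$ is \emph{not} harmless, and neither is $L\left(e^{-\xi-i\theta};\beta\right)$ inside $D$. Your justification rests on ``$\beta\ge c/\sqrt\ep$, hence $\frac{1}{\sqrt\ep\,\beta}=O(1)$'', but $\beta=\frac{r_*(-\ln|\tau|)}{4\sqrt\ep}$ with $-\ln|\tau|\sim 2/k\to 0$ as $k\to\infty$, so in the blow-up regime $\beta$ is \emph{not} of order $1/\sqrt\ep$, and \eqnref{e9} only yields $h(\xi,\theta)\,|P|\le\frac{1}{2\alpha\beta}\sim C/|\ln\tau|$ for these terms --- exactly the same (divergent as $k\to\infty$) size as the term you keep. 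Indeed, at $\xi=\xi_e$ the two exponents $2\xi_i-2\xi_e+\xi$ and $2\xi_i-\xi$ coincide, so the two contributions are literally equal there. Discarding one of them as $O(1)$ would give a blow-up coefficient $\frac{r_*\tau}{2\sqrt\ep}$, half of what the theorem asserts, and inside $D$ it would leave you unable to prove $\|\nabla v\|_{L^\infty(D)}=O(1)$ at all, since each of the two $P$-terms there admits only the bound $\frac{1}{2\alpha\beta}\sim C/|\ln\tau|$ (and can genuinely be that large); it is only their \emph{difference} that is controlled. You even note earlier, correctly, that $O\left(\frac{1}{\sqrt\ep\,\beta}\right)=O\left(\frac{1}{-\ln|\tau|}\right)$ is bounded only when $\tau$ stays away from $1$, which contradicts the later claim that terms with this same bound are uniformly $O(1)$.

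The paper's proof handles this by exploiting the \emph{pairing} of the two $P$-terms via \eqnref{e10}, applied to the expressions \eqnref{e7}--\eqnref{e8} established in the proof of Lemma \ref{thm:singularLT}: in $\Om\setminus\overline D$ the $\xi$-derivative contains the \emph{sum} $P\left(e^{-(2\xi_i-\xi)-i\theta}\right)+P\left(e^{-(2\xi_i-2\xi_e+\xi)-i\theta}\right)$, and \eqnref{e10} with $s_2-s_1=2(\xi-\xi_e)\le 2(\xi_i-\xi_e)=O(\alpha)$ shows (after multiplying by $h$) that the second term equals the first up to $O(1)$, producing the factor $2P$ and hence the full prefactor $\frac{r_*\tau}{\sqrt\ep}(\cosh\xi+\cos\theta)$; in $D$ and in the $\theta$-derivative the two $P$-terms enter with a \emph{difference} sign, and the same \eqnref{e10} estimate gives cancellation to $O(1)$ --- this, not any interplay with the affine part $C_1x_1+C_2x_2$, is why $\nabla q_d\cdot\mathbf{e}_\theta=O(1)$ and why $\nabla v$ stays bounded in $D$. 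The ``blow-up only as $k\to\infty$'' statement then follows from \eqnref{e9} applied to the surviving term, $\left|2h(\xi,\theta)P\left(e^{-(2\xi_i-\xi)-i\theta};\beta\right)\right|\le\frac{1}{\alpha\beta}\frac{h(\xi,\theta)}{h(2\xi_i-\xi,\theta)}\le\frac{C}{|\ln\tau|}$. Your step (7), passing from $e^{-(2\xi_i-\xi)-i\theta}$ to $e^{-(\xi+2\xi_i)-i\theta}$ via \eqnref{e10}, is fine; the proof as proposed, however, needs the pairing/cancellation bookkeeping above to be correct.
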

\begin{proof} We may suppose $k>1$ using (a) of Lemma \ref{thm:singularLT}. Applying \eqnref{e10} to (\ref{e7}) and (\ref{e8}), we have
\[\nabla q_d\cdot\mathbf{e}_\xi=h(\xi,\theta)\begin{cases}
\ds 2P\left(e^{-(2\xi_i-\xi)-i\theta};\beta\right)+O\left(\frac{1}{h(\xi,\theta)}\right)&\mbox{or}\\[2mm]
\ds 2P\left(e^{-(\xi+2\xi_i)-i\theta};\beta\right)+O\left(\frac{1}{h(\xi,\theta)}\right)&\mbox{for }\xi_e<\xi<\xi_i,\\[2mm]
\ds O\left(\frac{1}{h(\xi,\theta)}\right)&\mbox{for }\xi>\xi_i,
\end{cases}\]
and
\[\nabla q_d\cdot\mathbf{e}_\theta=O(1)\quad\mbox{for }\xi>\xi_e.\]
Moreover, Eq. (\ref{e9}) shows
\[\left|2h(\xi,\theta)P\left(e^{-(2\xi_i-\xi)-i\theta};\beta\right)\right|\le\frac{1}{\alpha\beta}\frac{h(\xi,\theta)}{h(2\xi_i-\xi,\theta)}\le\frac{C}{|\ln \tau|}\]
for $\xi_e<\xi<\xi_i$, which implies that the gradient blow-up does not occur unless $k\to\infty$.
\end{proof}

From asymptotic analysis on the integral representation in Theorem \ref{theo:grad}, the optimal estimate for the point-wise divergence of $\nabla v$ follows. The proof is given in the appendix in detail.

\begin{cor}\label{prop:optEst} Suppose $g_d\in C^{2,\delta}(\p\Om)$ for some $\delta>1/2$. For some constant number $C>0$ independent of $g_d$, $\ep$ and $k$,
\[\|\nabla v\|_{L^\infty(\Om\backslash\overline{D})}\le C\frac{|C_1|+|C_2|}{\frac{1}{k}+\sqrt{\ep}}.\]
The upper bound is optimal in that whenever $C_1C_2=0$ and $\ep^{-1}=O(k^2)$,
\[\|\nabla v\|_{L^\infty(\Om\backslash\overline{D})}\ge \frac{|C_1|+|C_2|}{{C}\left(\frac{1}{k}+\sqrt{\ep}\right)}.\]

\end{cor}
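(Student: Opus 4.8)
The plan is to derive the estimate from the integral representation in Theorem~\ref{theo:grad} by analyzing the function $P(z;\beta)$ along the relevant $\xi$-level curves. First I would recall that by part~(a) of Lemma~\ref{thm:singularLT} we may assume $k>1$, so that $\beta=\frac{r_*(-\ln|\tau|)}{4\sqrt{\ep}}$ is well-defined and positive. The starting point is the asymptotic formula
\[\nabla v(\mathbf{x}(\xi,\theta))=\frac{r_*\tau}{\sqrt{\ep}}(\cosh\xi+\cos\theta)\Big[C_1\Re\big\{P(e^{-(2\xi_i-\xi)-i\theta};\beta)\big\}+C_2\Im\big\{P(e^{-(2\xi_i-\xi)-i\theta};\beta)\big\}\Big]\mathbf{e}_\xi+O(1).\]
For the upper bound, I would apply the pointwise estimate \eqnref{e9} with $s=2\xi_i-\xi$ to get
\[\Big|\frac{r_*\tau}{\sqrt{\ep}}(\cosh\xi+\cos\theta)P(e^{-(2\xi_i-\xi)-i\theta};\beta)\Big|\le \frac{r_*|\tau|}{\sqrt{\ep}}\cdot\frac{\cosh\xi+\cos\theta}{2\beta(\cosh(2\xi_i-\xi)+\cos\theta)}.\]
Since $\xi_e<\xi<\xi_i$ gives $\xi\le 2\xi_i-\xi$, the ratio of $\cosh$-terms is at most $1$ by \eqnref{ineq:hfrac}; substituting $\beta=\frac{r_*(-\ln|\tau|)}{4\sqrt{\ep}}$ yields a bound of order $\frac{1}{-\ln|\tau|}$, uniformly in $\theta$ and $\ep$. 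The remaining work is to convert $\frac{1}{-\ln|\tau|}$ into the claimed form $\frac{1}{\frac1k+\sqrt{\ep}}$: using $\tau=\frac{k-1}{k+1}$ one has $-\ln|\tau|=\ln\frac{k+1}{k-1}\asymp\frac1k$ for large $k$, and one must also keep track of the competing bound coming from the $O(1)$ term together with the crude estimate valid when $\xi$ is near $\xi_i$ (where $2\xi_i-\xi$ is small and \eqnref{e9} degrades), which contributes the $\frac{1}{\sqrt{\ep}}$ regime. Taking the minimum of the two competing mechanisms gives $\frac{1}{\frac1k+\sqrt{\ep}}$ up to a constant; the factor $|C_1|+|C_2|$ is immediate from linearity in $(C_1,C_2)$.

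For the optimality (lower bound), I would evaluate the asymptotic formula at a cleverly chosen point, namely at $\theta=\pi$ (or a point near it) on a level curve $\xi$ close to $\xi_e$, where the real or imaginary part of $P$ is comparable to its modulus and does not suffer cancellation. Concretely, along $\theta=\pi$ the argument $e^{-(2\xi_i-\xi)-i\theta}=-e^{-(2\xi_i-\xi)}$ is a negative real number, so $P(-e^{-s};\beta)=\int_0^\infty\frac{-e^{-s}e^{-(\beta+1)t}}{(1-e^{-s}e^{-t})^2}\,dt$ is real and negative, with $|P|$ of order $\frac{1}{\beta(\cosh s+\cos\theta)}$ — and crucially, when $\ep^{-1}=O(k^2)$ we have $\beta=O(1)$ (since $-\ln|\tau|\asymp\frac1k$ and $\sqrt{\ep}\gtrsim\frac1k$), so $P$ does not decay and the factor $\frac{r_*\tau}{\sqrt\ep}(\cosh\xi+\cos\theta)$ produces exactly a blow-up of order $\frac{1}{\sqrt\ep}\asymp\frac{1}{\frac1k+\sqrt\ep}$. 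Under the hypothesis $C_1C_2=0$ only one of the two terms survives, so there is no possibility of the two contributions cancelling; this is precisely why the hypothesis $C_1C_2=0$ appears. I would then check that the $O(1)$ error term does not swamp this lower bound in the regime $\ep^{-1}=O(k^2)$, which holds because the leading term is of order at least a fixed constant there.

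The main obstacle will be the bookkeeping at the crossover between the two regimes $\frac1k$ and $\sqrt{\ep}$: near $\xi=\xi_i$ the estimate \eqnref{e9} is weak because $\cosh(2\xi_i-\xi)+\cos\theta$ can be as small as $O(\alpha^2/\ep)=O(1)$ when $\theta$ is near $\pi$, so one must instead bound $P$ directly (using $|P(z;\beta)|\le\int_0^\infty e^{-\beta t}\,dt=\frac1\beta$ near its singularity, or a more careful split of the integral) and combine this with the prefactor $\frac{1}{\sqrt\ep}(\cosh\xi+\cos\theta)$, which is itself $O(1/\sqrt\ep)$ only away from $\theta=\pi$ and smaller near it. Balancing these contributions — the "large $\beta$" decay giving $\frac{1}{-\ln|\tau|}\asymp k$-free-of-$\ep$ behaviour versus the "$\beta=O(1)$" regime giving $\frac{1}{\sqrt\ep}$ — and showing their minimum is comparable to $\frac{1}{\frac1k+\sqrt\ep}$ is the delicate part; the rest is the routine substitution of $\alpha=r_*\sqrt\ep+O(\ep\sqrt\ep)$ and $-\ln|\tau|=\ln\frac{k+1}{k-1}$. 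I would relegate the full computation to the appendix as the statement indicates, presenting here only the choice of evaluation point and the two estimates \eqnref{e9}, \eqnref{e10} that drive the argument.
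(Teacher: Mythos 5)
Your upper-bound strategy (the $(e9)$-based bound of order $\tau/(-\ln|\tau|)\asymp k$ combined with a separate uniform bound of order $1/\sqrt{\ep}$, then taking the minimum) is workable in outline, though the throwaway inequality $|P(z;\beta)|\le\int_0^\infty e^{-\beta t}\,dt=1/\beta$ is false (the modulus of the integrand is $e^{-\beta t}/\bigl(2(\cosh(s+t)+\cos\theta)\bigr)$, which is large for $\theta$ near $\pi$ and $s+t$ small); the paper avoids the case split altogether by keeping the factor $e^{-\beta t}$ inside the integral and using $\frac{\cosh s+\cos\theta}{\cosh(s+t)+\cos\theta}\le\frac{\cosh s+1}{\cosh(s+t)+1}$, which gives $h(s,\theta)\,|P(e^{-s-i\theta};\beta)|\le\frac{\cosh s+1}{\alpha(\beta+1)e^{s}}$ and hence the bound $\frac{1}{\frac1k+\sqrt{\ep}}$ in one stroke, since $\alpha(\beta+1)\asymp\frac1k+\sqrt{\ep}$.

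The genuine gap is in your optimality argument: $\theta=\pi$ is the wrong evaluation point. In this bipolar parametrization $\theta=\pi$ is the point of $\p D$ \emph{farthest} from the near-contact region (the gap corresponds to $\theta\approx0$), and there the prefactor collapses: $\cosh\xi+\cos\pi=\cosh\xi-1=O(\ep)$ because $\xi_e,\xi_i\asymp\sqrt{\ep}$. Moreover you read off the size of $P$ from \eqnref{e9}, but \eqnref{e9} is only an upper bound and is far from sharp at $\theta=\pi$: there one computes $|P(-e^{-s};\beta)|\le\tfrac12(\coth(s/2)-1)\asymp 1/s\asymp1/\sqrt{\ep}$ when $\beta=O(1)$, not $\frac{1}{\beta(\cosh s+\cos\theta)}\asymp1/\ep$. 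Hence the singular term at $\theta=\pi$ is of size $\frac{r_*\tau}{\sqrt{\ep}}\cdot O(\ep)\cdot O(1/\sqrt{\ep})=O(1)$, and no lower bound of order $\frac{1}{\frac1k+\sqrt{\ep}}$ can be extracted there; in addition $P$ is real at $\theta=\pi$, so $\Im\{P\}\equiv0$ and the case $C_1=0$, $C_2\neq0$ yields nothing at all at that point. The paper instead evaluates at $\theta=\pi/2$, where $\cosh\xi+\cos\theta=\cosh\xi\ge1$, writes $P(e^{-s-i\pi/2};\beta)=\int_0^\infty e^{-\beta t}\frac{1-i\sinh(s+t)}{2\cosh^2(s+t)}\,dt$, and shows $\Re\{P\}\gtrsim\frac{1}{\beta+1}$ and $|\Im\{P\}|\gtrsim\frac{1}{(\beta+1)(\beta+3)}$; under $\ep^{-1}=O(k^2)$ one has $\beta=O(1)$, which gives the lower bound $\gtrsim\frac{|C_1|+|C_2|}{\frac1k+\sqrt{\ep}}$ when $C_1C_2=0$. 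To repair your argument you should move the evaluation point to $\theta=\pi/2$ (or anywhere with $\cos\theta$ bounded away from $-1$) and prove genuine lower bounds for $\Re\{P\}$ and $\Im\{P\}$ there, rather than treating \eqnref{e9} as two-sided.
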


\subsection{Image line charge formula}


We obtained a necessary condition for $|\nabla v|$ to blow up in Section \ref{sec:formulation}. Furthermore, we derived in Theorem \ref{theo:grad} the integral representation of the dominating term. In this section, the singular behavior of $\nabla v$ is described solely by some type of virtual charges of which densities are supported on the part of the real line, namely, $[-c_i,-\alpha]$ and $[\alpha,c_i]$; see Figure \ref{fig:supps} for their illustration.

We define the density functions
\begin{align*}
\ds\varphi_+(s)&=2\alpha\beta e^{2\beta\xi_i}\frac{(s-\alpha)^{\beta-1}}{(s+\alpha)^{\beta+1}}, \quad \psi_+(s)=e^{2\beta\xi_i}\left(\frac{s-\alpha}{s+\alpha}\right)^\beta\quad\mbox{for }\alpha<s<c_i,\quad\mbox{and}\\[2mm]
\ds\varphi_-(s)&=-2\alpha\beta e^{2\beta\xi_i}\frac{(-s-\alpha)^{\beta-1}}{(-s+\alpha)^{\beta+1}},\quad\psi_-(s)=-e^{2\beta\xi_i}\left(\frac{-s-\alpha}{-s+\alpha}\right)^\beta\quad\mbox{for }-c_i<s<-\alpha,
\end{align*}
which slightly varies from those in \cite{imageCharge2018}. Since the proof of the following lemma is the same as in \cite{imageCharge2018}, we state the lemma without proof:

\begin{lemma}{\rm(\cite[Lemma 5.1]{imageCharge2018})}\label{lem:LTandIC}
We simplify the notation as $\Bx=\Bx(\xi,\theta)$ and $\Bs=(s,0)$. Then,
\[\begin{aligned}\ds L(e^{-(2\xi_i-\xi)-i\theta};\beta)&=\int_{\alpha}^{c_i}\ln|\Bx-\mathbf{s}|\varphi_+(s)ds + i\int_{\alpha}^{c_i}\frac{\p}{\p x_2}\ln|\Bx-\mathbf{s}|\psi_+(s)ds+r_+(\Bx)\hfill\mbox{ for $\mathbf{x}\in \Om\backslash \overline{D}$},\\[2mm]
\ds L(e^{-(\xi+2\xi_i)-i\theta};\beta)&=-\int_{-c_i}^{-\alpha}\ln|\Bx-\mathbf{s}|\varphi_-(s)ds - i\int_{-c_i}^{-\alpha}\frac{\p}{\p x_2}\ln|\Bx-\mathbf{s}|\psi_-(s)ds+r_-(\Bx)\hfill\mbox{ for $\Bx\in \Om$,}\end{aligned}\]
where
\[\|\nabla r_+\|_{L^\infty( \Om\backslash \overline{D})}\le1/r_i\quad\mbox{and}\quad\|\nabla r_-\|_{L^\infty(\Om)}\le1/r_i.\]
\end{lemma}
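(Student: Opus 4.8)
The plan is to evaluate the two image-charge integrals in closed form, reducing each to a Cauchy-type integral which is then summed against the defining series $L(z;\beta)=\sum_{n\ge1}(-1)^n z^n/(n+\beta)$ of the Lerch transcendent. I carry out the first identity in detail; the second is entirely parallel.

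First I would use the relation between the two densities. A direct differentiation gives $\varphi_+=\psi_+'$ on $(\alpha,c_i)$, together with $\psi_+(\alpha)=0$ (as $\beta>0$) and $\psi_+(c_i)=1$ (using $e^{-2\xi_i}=(c_i-\alpha)/(c_i+\alpha)$, a consequence of \eqnref{def:xixe}). Integrating $\int_\alpha^{c_i}\ln|\Bx-\Bs|\,\psi_+'(s)\,ds$ by parts — the endpoint term at the pole $(\alpha,0)$ vanishing because $\psi_+(\alpha)=0$ — and using $\p_s\ln|\Bx-\Bs|=-\p_{x_1}\ln|\Bx-\Bs|$, one obtains
\[\int_\alpha^{c_i}\ln|\Bx-\Bs|\,\varphi_+(s)\,ds=\ln|\Bx-(c_i,0)|+\int_\alpha^{c_i}\p_{x_1}\ln|\Bx-\Bs|\,\psi_+(s)\,ds .\]
Adding $i$ times the $x_2$-dipole integral and invoking the Wirtinger identity $(\p_{x_1}+i\p_{x_2})\ln|z-s|=\overline{(z-s)^{-1}}=(\bar z-s)^{-1}$ for real $s$ (i.e.\ $\bar\p$ of $\Re\log$), the integral part of the asserted formula (that is, $L$ minus the remainder $r_+$) reduces to $\ln|\Bx-(c_i,0)|+\int_\alpha^{c_i}\psi_+(s)(\bar z-s)^{-1}\,ds$.

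Next I would evaluate that Cauchy integral. The substitution $t=e^{2\xi_i}(s-\alpha)/(s+\alpha)\in(0,1)$ makes $\psi_+=t^\beta$ and $s=\alpha(e^{2\xi_i}+t)/(e^{2\xi_i}-t)$, turning the kernel into a rational function of $t$:
\[\int_\alpha^{c_i}\frac{\psi_+(s)}{\bar z-s}\,ds=\int_0^1\frac{2\alpha e^{2\xi_i}\,t^\beta}{\big[(\bar z-\alpha)e^{2\xi_i}-(\bar z+\alpha)t\big]\,(e^{2\xi_i}-t)}\,dt .\]
The integrand has simple poles at $t=e^{2\xi_i}>1$ and at $t_\star=-\omega^{-1}$, where $\omega:=e^{-(2\xi_i-\xi)-i\theta}$ and the identification $\omega^{-1}=-(\bar z+\alpha)^{-1}(\bar z-\alpha)e^{2\xi_i}$ rests on $\overline{e^{\xi+i\theta}}=(\alpha+\bar z)/(\alpha-\bar z)$, a consequence of \eqnref{def:bipolar}. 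Since $|\omega|=e^{-(2\xi_i-\xi)}<1$ on $\Om\setminus\overline D$ (there $\xi<\xi_i<2\xi_i$), both poles lie outside $[0,1]$; a partial-fraction decomposition splits the integral into two pieces. The $t=e^{2\xi_i}$ piece is an $\Bx$-independent constant, while the $t_\star$ piece, after expanding $(1+\omega t)^{-1}$ geometrically and integrating term by term, reproduces $L(\omega;\beta)=\sum_{n\ge1}(-1)^n\omega^n/(n+\beta)$ (the bookkeeping uses $\tfrac{2\alpha}{\bar z-\alpha}(1+\omega e^{2\xi_i})^{-1}=-1$, again from \eqnref{def:bipolar}). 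Collecting, $r_+(\Bx)=-\ln|\Bx-(c_i,0)|+\mathrm{const}$; and since $(c_i,0)$ is the center of the disk $D$, every $\Bx\in\Om\setminus\overline D$ satisfies $|\Bx-(c_i,0)|\ge r_i$, so $\|\nabla r_+\|_{L^\infty(\Om\setminus\overline D)}\le1/r_i$.

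Finally, the second identity follows from the same three steps with $\varphi_-=-\psi_-'$, the conjugate identity $(\p_{x_1}-i\p_{x_2})\ln|z-s|=(z-s)^{-1}$, the substitution $t=e^{2\xi_i}(-s-\alpha)/(-s+\alpha)$, and $e^{-\xi-i\theta}=(\alpha-z)/(\alpha+z)$; this time $|e^{-(\xi+2\xi_i)-i\theta}|=e^{-(\xi+2\xi_i)}<1$ for every $\xi>0$ and the support $[-c_i,-\alpha]$ lies entirely to the left of $\overline D$, so the formula is valid on all of $\Om$, and the leftover $r_-(\Bx)=-\ln|\Bx-(-c_i,0)|+\mathrm{const}$ is controlled by the elementary estimate $\mathrm{dist}((-c_i,0),\Om)=2c_i-r_i-\ep\ge r_i$ — equivalently $c_i-r_i=\ep(2r_e-\ep)/\big(2(r_e-r_i-\ep)\big)\ge\ep/2$ — which gives $\|\nabla r_-\|_{L^\infty(\Om)}\le1/r_i$. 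The only genuinely non-mechanical steps are (i) recognizing that, after integration by parts, the $\varphi_\pm$ single-layer term and the $\psi_\pm$ dipole term combine into one Cauchy integral via $\bar\p$ (resp.\ $\p$) of $\Re\log$, and (ii) that the rational substitution puts one partial-fraction pole at precisely the value $-\omega^{-1}$ for which the geometric series reproduces $L(\cdot;\beta)$; the rest is bookkeeping, whose only delicate point is verifying that the surviving logarithm $\ln|\Bx-(\pm c_i,0)|$ has $\ep$-uniformly bounded gradient, which forces the first identity onto $\Om\setminus\overline D$ (where, in addition, $|\omega|<1$) while leaving the second valid on all of $\Om$.
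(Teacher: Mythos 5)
Your proposal is correct. Note that the paper itself offers no proof of this lemma: it is stated verbatim with a citation to \cite[Lemma 5.1]{imageCharge2018} and the remark that the proof is the same as there, so there is nothing internal to compare against; your argument is a valid self-contained substitute, and its route (integrate by parts using $\varphi_\pm=\pm\psi_\pm'$ with $\psi_+(\alpha)=\psi_-(-\alpha)=0$, combine the single-layer and dipole terms via the Wirtinger identities into a Cauchy integral, then the rational substitution $t=e^{2\xi_i}\frac{s\mp\alpha}{s\pm\alpha}$ and partial fractions) is essentially the standard identification of these line-charge potentials with the Lerch transcendent. I checked the computational pivots: $\psi_+(c_i)=1$ from $e^{2\xi_i}=\frac{c_i+\alpha}{c_i-\alpha}$; the pole location $t_\star=-\omega^{-1}$ via the conjugate of \eqnref{def:bipolar}; that the $t=e^{2\xi_i}$ partial-fraction piece carries an $\Bx$-independent coefficient; that the other piece equals $-\int_0^1\frac{\omega t^\beta}{1+\omega t}\,dt$, which after $t=e^{-u}$ is literally the paper's integral definition of $L(\omega;\beta)$ (so the geometric-series step is not even needed); and the remainder estimates, $|\Bx-(c_i,0)|\ge r_i$ on $\Om\setminus\overline{D}$ and $\mathrm{dist}((-c_i,0),\Om)=2c_i-r_i-\ep\ge r_i$, the latter being equivalent to $c_i-r_i\ge\ep/2$, which indeed follows from \eqnref{def:Ce:Ci}. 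All of these are correct, as are the convergence requirements $|e^{-(2\xi_i-\xi)-i\theta}|<1$ on $\Om\setminus\overline{D}$ and $|e^{-(\xi+2\xi_i)-i\theta}|<1$ on all of $\Om$, which is exactly why the first formula is confined to $\Om\setminus\overline{D}$ while the second holds in $\Om$.
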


The following asymptotic formulas in terms of image charges correspond to one of the main results of \cite{imageCharge2018}; the proof follows directly from applying Lemma \ref{lem:LTandIC} to Theorem \ref{thm:singularLT} and \ref{theo:grad}.
\begin{theorem}[Image charges for the gradient blow-up term]\label{thm:imgCharge} 
Let $\Om$ and $D$ be given by \eqnref{def:trans:domains} and suppose that $g\in C^{1,\delta}(\Om)$ for some $\delta>1/2$. With the density functions defined in this subsection, the following hold.
\begin{itemize}
\item[\rm(a)] The gradient of the solution $v$ to \eqnref{e0} blows up only in $\Om\backslash\overline{D}$, only if $k\gg1$ and $0<\ep\ll1$.
\item[\rm(b)]
The solution $v$ to \eqnref{e0} satisfies the asymptotic formula
\beq\label{u:aymp}
v(\Bx)=v_*(\Bx)+r(\Bx)\quad\mbox{in }\Om\setminus\overline{D},
\eeq
where $\|\mathbf{e}_\xi\cdot\nabla r\|_{L^\infty(\Om\backslash\overline{D})}$ is bounded independently of $(\ep,k)$ and 
$$v_*(\Bx)=-r_*^2\tau
\left[C_1\int_{\alpha}^{c_i}\ln|\Bx-\Bs|\varphi_+(s)ds+C_2\int_{\alpha}^{c_i}\left(\pd{}{x_2}\ln|\Bx-\Bs|\right)\psi_+(s)ds\right].$$

Alternatively, $v$ admits another asymptotic formula as follows:
$$v(\Bx)=\widetilde{v}_*(\Bx)+\widetilde{r}(\Bx)\quad\mbox{in }\Om\setminus\overline{D},$$ 
where $\|\mathbf{e}_\xi\cdot\nabla \widetilde{r}\|_{L^\infty(\Om\backslash\overline{D})}$ is bounded independently of $(\ep,k)$ and 
$$\widetilde{v_*}(\Bx)=-r_*^2\tau\left[C_1\int_{-c_i}^{-\alpha}\ln|\Bx-\Bs|\varphi_-(s)ds+C_2\int_{-c_i}^{-\alpha}\left(\pd{}{x_2}\ln|\Bx-\Bs|\right)\psi_-(s)ds\right].$$
\end{itemize}
\end{theorem}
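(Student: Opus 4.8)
The plan is to reduce Theorem~\ref{thm:imgCharge} entirely to the already-established integral representations, so that essentially no new estimate is needed. The starting point is Lemma~\ref{thm:singularLT}(b), which expresses $v$ up to a remainder with uniformly bounded gradient as $\tfrac{r_*^2\tau}{2}\big[C_1\Re\{q_d\}+C_2\Im\{q_d\}\big]$. First I would restrict attention to $\Om\setminus\overline D$, i.e.\ to the regime $\xi_e<\xi<\xi_i$, where by the definition of $q_d$ we have $q_d(\Bx;\beta)=L(e^{-(2\xi_i-2\xi_e+\xi)-i\theta};\beta)-L(e^{-(2\xi_i-\xi)-i\theta};\beta)$. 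The key observation is that the first Lerch term $L(e^{-(2\xi_i-2\xi_e+\xi)-i\theta};\beta)$ has argument with modulus $e^{-(2\xi_i-2\xi_e+\xi)}\le e^{-\xi_i}$ bounded away from $1$, so by \eqnref{e9}--\eqnref{e10} (and the corresponding crude bound on $L$ itself) its contribution to $\nabla v$ is $O(1)$ uniformly; only the term $-L(e^{-(2\xi_i-\xi)-i\theta};\beta)$ can be singular. This isolates the genuinely blowing-up part as $-\tfrac{r_*^2\tau}{2}\big[C_1\Re\{L(e^{-(2\xi_i-\xi)-i\theta};\beta)\}+C_2\Im\{L(\cdots)\}\big]$ plus a gradient-$O(1)$ remainder.

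Next I would feed this into Lemma~\ref{lem:LTandIC}, which rewrites $L(e^{-(2\xi_i-\xi)-i\theta};\beta)$ for $\Bx\in\Om\setminus\overline D$ as $\int_\alpha^{c_i}\ln|\Bx-\Bs|\varphi_+(s)\,ds + i\int_\alpha^{c_i}\tfrac{\p}{\p x_2}\ln|\Bx-\Bs|\psi_+(s)\,ds + r_+(\Bx)$ with $\|\nabla r_+\|_{L^\infty(\Om\setminus\overline D)}\le 1/r_i$. Taking real and imaginary parts and absorbing $r_+$ and the first Lerch term into the remainder $r$, one arrives exactly at $v(\Bx)=v_*(\Bx)+r(\Bx)$ with $v_*$ as displayed (the factor $-r_*^2\tau$ comes from $-\tfrac{r_*^2\tau}{2}$ times the factor $2$ hidden in the relation between $q_d$ and the density normalization — I would track this constant carefully but it is routine). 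The same argument with the ``$\xi>\xi_i$'' branch of $q_d$, which instead produces $L(e^{-\xi-i\theta};\beta)$, together with the second identity in Lemma~\ref{lem:LTandIC} (valid for all $\Bx\in\Om$), yields the alternative formula with $\widetilde v_*$ and densities $\varphi_-,\psi_-$ supported on $[-c_i,-\alpha]$; here one must also check that the two branches of $q_d$ agree up to a gradient-$O(1)$ difference on the overlap region, which again follows from \eqnref{e10} since the discrepancy is a difference of two Lerch values whose arguments differ by a factor $e^{-2(\xi_i-\xi_e)}=O(1)$ away from $1$ in the relevant range — actually more simply, Theorem~\ref{theo:grad} already records both representations as valid, so I can just cite it.

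Finally, part~(a) of the theorem is immediate: it is precisely the dichotomy already proved in Theorem~\ref{theo:grad} (gradient bounded on $D$, and on $\Om\setminus\overline D$ the singular factor carries a $1/|\ln\tau|=O(1/|\ln\tau|)$ prefactor that stays bounded unless $\tau\to1$, i.e.\ $k\to\infty$, while $\alpha\sim r_*\sqrt\ep$ forces $\ep\to0$), so I would simply invoke it. The only point requiring genuine care — and the main obstacle — is the \emph{bookkeeping of constants and of which error terms are $O(1)$ in which region}: one must be sure that every quantity dumped into $r$ or $\widetilde r$ genuinely has $\mathbf e_\xi\cdot\nabla(\cdot)$ bounded independently of both $\ep$ and $k$, paying attention to the scale factor $h(\xi,\theta)=(\cosh\xi+\cos\theta)/\alpha$ which blows up like $1/\alpha\sim 1/(r_*\sqrt\ep)$ when differentiating — this is exactly why the estimates \eqnref{e9}--\eqnref{e10} are stated with the compensating $(\cosh s+\cos\theta)$ denominator, and the verification amounts to checking that each discarded term is of the form $h(\xi,\theta)\cdot O(1/h(\cdot,\theta))$ with the second slot evaluated at a $\xi$-argument bounded below by $\xi_i$ (hence $\ge \xi_i$), so that \eqnref{ineq:hfrac} applies. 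Since all the hard analytic work is already packaged in Lemmas~\ref{thm:singularLT} and~\ref{lem:LTandIC} and Theorem~\ref{theo:grad}, the proof is essentially an assembly, and I would present it as such. \qed
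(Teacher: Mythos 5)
Your reduction to Lemmas \ref{thm:singularLT} and \ref{lem:LTandIC} is the right skeleton (it is exactly the paper's route), but your ``key observation'' is false and it breaks the constant in $v_*$. You claim that in $\Om\setminus\overline{D}$ the first Lerch term $L(e^{-(2\xi_i-2\xi_e+\xi)-i\theta};\beta)$ has argument of modulus $e^{-(2\xi_i-2\xi_e+\xi)}\le e^{-\xi_i}$ ``bounded away from $1$'' and hence contributes $O(1)$ to $\nabla v$. But $\xi_i,\xi_e=O(\sqrt{\ep})$, so $e^{-\xi_i}\to1$ as $\ep\to0$; the two Lerch arguments $e^{-(2\xi_i-2\xi_e+\xi)-i\theta}$ and $e^{-(2\xi_i-\xi)-i\theta}$ are in fact nearly equal. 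Differentiating (see \eqnref{e7}) gives $\partial_\xi q_d=P\big(e^{-(2\xi_i-2\xi_e+\xi)-i\theta};\beta\big)+P\big(e^{-(2\xi_i-\xi)-i\theta};\beta\big)$, and the best uniform bound for the first term times the scale factor, via \eqnref{e9} and \eqnref{ineq:hfrac}, is of order $1/(\alpha\beta)\sim 1/|\ln\tau|$, which blows up precisely in the regime $k\to\infty$ you care about. So this term is \emph{not} absorbable into the remainder; the correct treatment (the content of Theorem \ref{theo:grad}, via \eqnref{e10} with $s_2-s_1=2(\xi-\xi_e)\le 2(\xi_i-\xi_e)=O(\alpha)$) is that the two $P$-values agree up to an error whose product with $h(\xi,\theta)$ is $O(1)$, so the first Lerch term \emph{doubles} the singular $\mathbf{e}_\xi$-part (and cancels it in the $\mathbf{e}_\theta$-part). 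This is exactly where the prefactor $-r_*^2\tau$, rather than $-\tfrac{r_*^2\tau}{2}$, comes from. There is no ``factor $2$ hidden in the density normalization'': Lemma \ref{lem:LTandIC} represents $L$ itself with the fixed densities $\varphi_+,\psi_+$, so under your reasoning the final formula would come out as $-\tfrac{r_*^2\tau}{2}[\cdots]$, off by a factor $2$ from the theorem. This is a genuine gap, not routine bookkeeping.

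A secondary inaccuracy: the alternative formula with $\varphi_-,\psi_-$ does not come from the ``$\xi>\xi_i$'' branch of $q_d$ (that branch is relevant inside $D$, and produces $L(e^{-\xi-i\theta};\beta)$, which is not the function appearing in the second identity of Lemma \ref{lem:LTandIC}). It comes, still within $\Om\setminus\overline{D}$, from replacing $P\big(e^{-(2\xi_i-\xi)-i\theta};\beta\big)$ by $P\big(e^{-(\xi+2\xi_i)-i\theta};\beta\big)$ via \eqnref{e10} (the arguments differ by $2\xi\le2\xi_i=O(\alpha)$), i.e.\ from the second display of Theorem \ref{theo:grad}, and then applying the second identity of Lemma \ref{lem:LTandIC} for $L(e^{-(\xi+2\xi_i)-i\theta};\beta)$. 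Your fallback of simply citing Theorem \ref{theo:grad} for both representations, and citing it for part (a), is correct and is what the paper does; if you repair the treatment of the first Lerch term (equivalently, work from Theorem \ref{theo:grad} rather than from $q_d$ directly), the rest of your assembly goes through.
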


\begin{figure}[h!]
\centering
\scalebox{0.8}{
\begin{tikzpicture}[scale=0.6]
\draw[dashed] (5.013490, 0) circle (5);
\draw[dashed] (2.033490, 0) circle (2);

\draw (0.367534, -0.3) node {$\alpha$};
\draw (-0.45, -0.3) node {$-\alpha$};
\draw (2.033490, -0.3) node {$c_i$};
\draw (-2.033490, -0.3) node {$-c_i$};
\draw (2.033490, 1.5) node {$D$};
\draw (5.013490, 4.5) node {$\Om$};

\fill (0.367534, 0) circle (0.07);
\fill (-0.367534, 0) circle (0.07);
\fill (2.033490, 0) circle (0.07);
\fill (-2.033490, 0) circle (0.07);
\draw[ultra thick] (-2.033490, 0) -- (-0.367534, 0);
\draw[ultra thick] (2.033490, 0) -- (0.367534, 0);
\end{tikzpicture}}
$\qquad\qquad$
\scalebox{0.8}{
\begin{tikzpicture}[scale=0.6]
\draw[dashed] (5.013490, 0) circle (5);
\draw[thick] (2.033490, 0) circle (2);

\draw (0.567534, -0.3) node {$\alpha$};
\draw (-2, 0) node {$\theta=0$};
\draw (-2, 2) node {$\theta=\frac{\pi}{2}$};
\draw (-2.1, 3) node {$\theta=\frac{3\pi}{4}$};
\draw (-2, 1) node {$\theta=\frac{\pi}{4}$};
\draw (-2.3, -1) node {$\theta=-\frac{\pi}{4}$};
\draw (-2.3, -2) node {$\theta=-\frac{\pi}{2}$};
\draw (-2.3, -3) node {$\theta=-\frac{3\pi}{4}$};
\draw (6, 0) node {$\theta=\pi$};

\draw (2.033490, 1.5) node {$D$};
\draw (5.013490, 4.5) node {$\Om$};

\fill (0.367534, 0) circle (0.07);
\fill (4.0335, 0) circle (0.07);
\fill (0.2181, -0.8393) circle (0.07);
\fill (0.0664, -0.3615) circle (0.07);
\fill (0.0392, -0.1508) circle (0.07);
\fill (0.2181, 0.8393) circle (0.07);
\fill (0.0664, 0.3615) circle (0.07);
\fill (0.0392, 0.1508) circle (0.07);
\fill (0.0335, 0) circle (0.07);

\draw [->] (5, 0) -- (4.1335, 0);
\draw [->] (-1, -3) -- (0.1181, -0.8393);
\draw [->] (-1, -2) -- (-0.0336, -0.3615);
\draw [->] (-1, -1) -- (-0.0608, -0.1508);
\draw [->] (-1, 3) -- (0.1181, 0.8393);
\draw [->] (-1, 2) -- (-0.0336, 0.3615);
\draw [->] (-1, 1) -- (-0.0608, 0.1508);
\draw [->] (-1, 0) -- (-0.0665, 0);

\end{tikzpicture}}
\caption{\label{fig:supps} The domain is determined by $r_e=5$, $r_i=2$ and $\ep=1/50$. On the left, we describe the supports of image charge density functions in thick line segments. On the right, we draw $\p D$ with $\theta$ coordinates specified, where we will plot in Figure \ref{Dir_blowups} the limit of $\nabla v\cdot\mathbf{e}_\xi$ from outside.}
\end{figure}
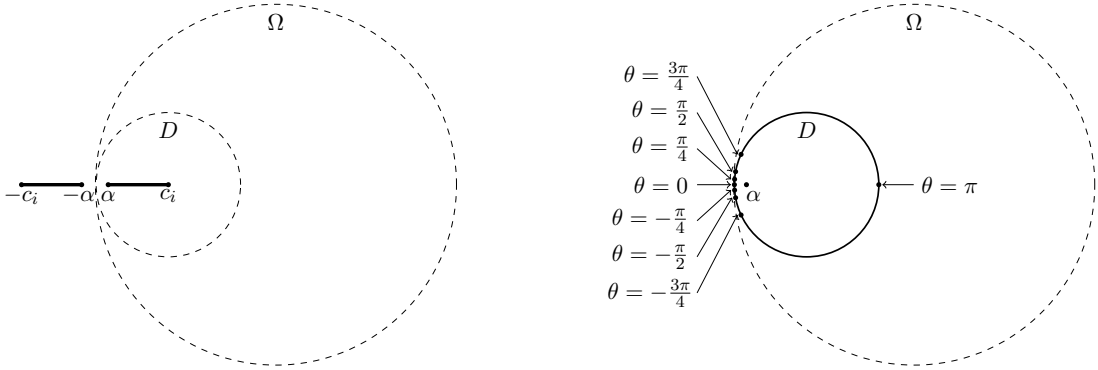

\begin{figure}[h!]
\centering
\includegraphics[width=\textwidth]{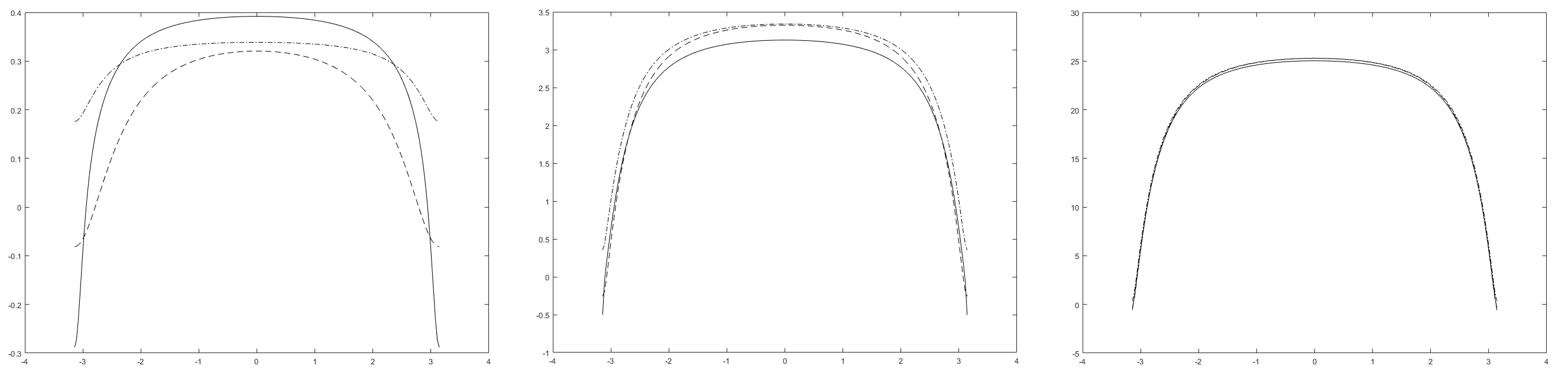}
\phantom{a}\\[1mm]
\includegraphics[width=\textwidth]{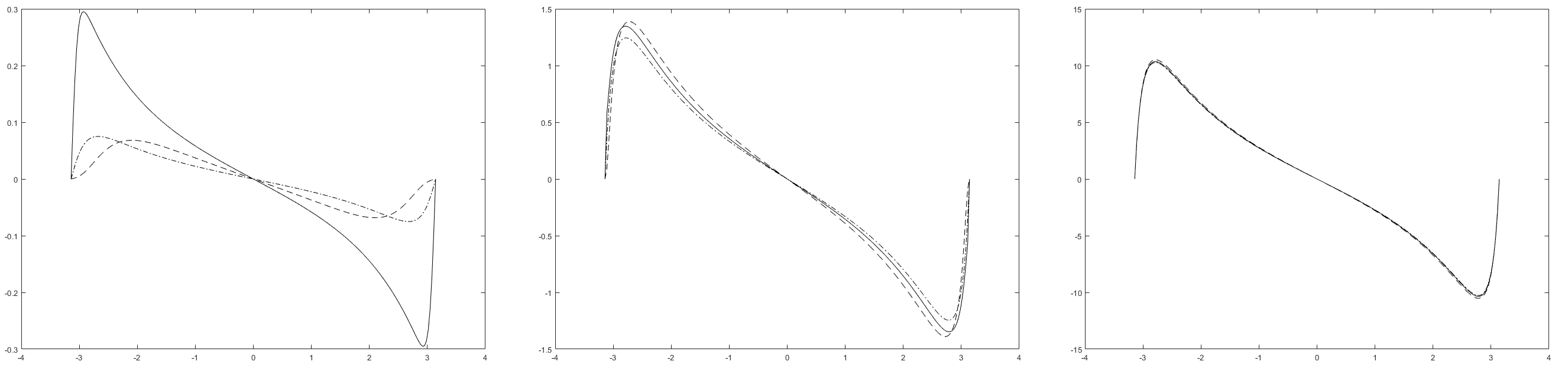}
\caption{\label{Dir_blowups}We plot $\nabla v\cdot\mathbf{e}_\xi$ (filled curves,) $\nabla \widetilde{v}_*\cdot\mathbf{e}_\xi$ (dashed curves) and $\nabla v_*\cdot\mathbf{e}_\xi$ (dashed and dotted curves) with respect to $\theta$ on the circle $\p D$ as illustrated in Figure \ref{fig:supps}. The three figures above assume $g_d(t)=\cos t$ and the three figures below assume $g_d(t)=\sin t$, where $t$ parametrizes $\p D$ by $c_i+r_ie^{it}$. The geometric parameter $\ep=1/(50\cdot 64^{n-1})$, $n=1,2,3$ decreases from the left to the right. For all the cases, $r_e=5$ and $r_i=2$, while $k$ is determined so that $k^2\ep$ is a constant number $2/25$. The result is consistent with Theorem \ref{thm:imgCharge} and Table \ref{tab:CBUR}; one can easily check that $(C_1,C_2)=(1/r_e,0)$ and $(C_1,C_2)=(0,1/r_e)$ for $g_d(t)=\cos t$ and $g_d(t)=\sin t$, respectively.}
\end{figure}

\subsection{Asymptotic formula for $\nabla u$}

Likewise, we may assume
\beq\label{H:linear}H(x_1,x_2)=C_1x_1+C_2x_2.\eeq

\begin{lemma}\label{lemma:u:lseries}
 The solution ${u}$ to \eqnref{e0} with $u=\frac{\p H}{\p\nu}$ for $H$ defined by \eqnref{H:linear} satisfies
\beq\label{eqn:tu:series}
u(\Bx)=C_1\left(x_1+\Re\{\widetilde{F}(\mathbf{x})\}\right)
+C_2\left(x_2+\Im\{\widetilde{F}(\mathbf{x})\}\right),
\eeq
where $\widetilde{F}$ is defined in $\Om$ by
\begin{equation}
\widetilde{F}\left(\mathbf{x}(\xi,\theta)\right)={\rm{const.}}+\begin{cases}
\ds\sum_{n=1}^\infty\left(\widetilde{A}_ne^{n(\xi-i\theta-2\xi_i)}+\widetilde{B}_ne^{n(-\xi-i\theta)}\right)&\mbox{for }\xi_e<\xi<\xi_i,\\[2mm]
\ds\sum_{n=1}^\infty\left(\widetilde{A}_ne^{n(-\xi-i\theta)}+\widetilde{B}_ne^{n(-\xi-i\theta)}\right)&\mbox{for }\xi>\xi_i
\end{cases}\end{equation}
with
\beq \widetilde{A}_n=\frac{-2\alpha(-1)^n\tau}{1+\tau e^{-2n(\xi_i-\xi_e)}}\quadand \widetilde{B}_n=\frac{-2\alpha(-1)^n\tau e^{-2n(\xi_i-\xi_e)}}{1+\tau e^{-2n(\xi_i-\xi_e)}}.\eeq
\end{lemma}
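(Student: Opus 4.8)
The plan is to mirror the derivation of Lemma \ref{lemma:tu:series} for $v$, since the only structural difference between the two boundary value problems at this stage is that the reflection series in Lemma \ref{lemma:density:series} for $u$ carries alternating signs $(-\tau)^{n+1}$ rather than $\tau^{n+1}$. First I would recall that by Lemma \ref{lemma:boundedness} we may, within an $O(1)$ error uniform in $(\ep,k)$, replace $H$ by its linearization \eqnref{H:linear}, so it suffices to solve \eqnref{e0} with Neumann data $\p H/\p\nu$ on $\p\Om$ for $H(x_1,x_2)=C_1x_1+C_2x_2$. Then I would write $u$ on $\Om\setminus\overline D$ and on $D$ as a separated-variable harmonic expansion in the bipolar coordinates $(\xi,\theta)$, using the general solution form displayed before the lemma, together with the bipolar expansions of $x_1$ and $x_2$ already recorded in the excerpt.

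The key computation is to match coefficients. In $D$ (i.e.\ $\xi>\xi_i$) the solution must be bounded as $\xi\to\infty$, forcing only $e^{-n\xi}$ modes; in the shell $\xi_e<\xi<\xi_i$ both $e^{n\xi}$ and $e^{-n\xi}$ modes appear, and I would package the growing mode as $\widetilde A_n e^{n(\xi-2\xi_i)}$ so that its behaviour near $\p D$ is transparent. The three interface/boundary conditions in \eqnref{e0} — continuity of $u$ across $\p D$, the conductivity jump $\p u/\p\nu|_+ = k\,\p u/\p\nu|_-$ across $\p D$ rewritten via $\tau=(k-1)/(k+1)$, and the Neumann condition on $\p\Om$ ($\xi=\xi_e$) — give, for each Fourier mode $n$, a small linear system in $\widetilde A_n$, $\widetilde B_n$ and the interior coefficient. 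Using the normal-derivative relation \eqnref{N:bipolar} to convert $\p/\p\nu$ into $-\operatorname{sgn}(\xi)h\,\p/\p\xi$, and recalling $\xi_{i,n}, \xi_{e,n}$ notation only if convenient, one solves this $3\times3$ (effectively $2\times2$ after eliminating the interior coefficient) system and reads off
\[\widetilde A_n=\frac{-2\alpha(-1)^n\tau}{1+\tau e^{-2n(\xi_i-\xi_e)}},\qquad \widetilde B_n=\frac{-2\alpha(-1)^n\tau e^{-2n(\xi_i-\xi_e)}}{1+\tau e^{-2n(\xi_i-\xi_e)}},\]
the sign $+\tau$ in the denominator being precisely the trace of the alternating $(-\tau)^{n+1}$ series, in contrast to the $1-\tau e^{-2n(\xi_i-\xi_e)}$ appearing in \eqnref{def:AnBn} for $v$. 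Finally I would define $\widetilde F$ by \eqnref{e5} with $A_n,B_n$ replaced by $\widetilde A_n,\widetilde B_n$, observe that $\Re\{e^{n(-\xi-i\theta)}\}$-type sums reproduce the bipolar expansions of $x_1,x_2$ up to the stated additive constant, and thereby obtain the claimed representation \eqnref{eqn:tu:series}.

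An alternative, and arguably cleaner, route is to obtain the coefficients directly from Lemma \ref{lemma:density:series}: substituting $H=C_1x_1+C_2x_2$, using \eqnref{reflection:bipolar} to see that each reflection $R_{\p\Om}$ or $R_{\p D}$ acts on the $n$-th bipolar mode by shifting $\xi$ (hence multiplying $e^{-n\xi}$ by $e^{-2n\xi_e}$ or $e^{2n\xi_i}$ appropriately), summing the resulting geometric series in $(-\tau)$, and collecting terms. This makes the geometric ratio $-\tau e^{-2n(\xi_i-\xi_e)}$ appear automatically, and hence the denominator $1+\tau e^{-2n(\xi_i-\xi_e)}$. I would likely present this second route since Lemma \ref{lemma:density:series} is already available and its convergence has been justified in Remark \ref{remark:series:regularity}, so term-by-term manipulation is legitimate.

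The main obstacle is purely bookkeeping: keeping track of which modes survive in which region, getting the signs of the $\operatorname{sgn}(\xi)$ factors in \eqnref{N:bipolar} right at $\xi=\xi_e$ versus $\xi=\xi_i$, and correctly absorbing the non-decaying ($n=0$) part and the "free-space" linear terms $x_1,x_2$ into the ``$\mathrm{const.}$'' and into the explicit $C_1x_1+C_2x_2$ outside $\widetilde F$. There is no analytic difficulty — no convergence issue beyond what Remark \ref{remark:series:regularity} already supplies, and no estimate is claimed in this lemma — so once the mode-matching is set up carefully the identification of $\widetilde A_n,\widetilde B_n$ is forced. I expect the write-up to be essentially a one-paragraph parallel to the proof of Lemma \ref{lemma:tu:series}, with the sign change $1-\tau\rightsquigarrow 1+\tau$ in the denominators being the only substantive difference to flag.
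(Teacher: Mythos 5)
Your proposal is correct, and your first route is exactly the derivation the paper intends: the lemma is stated without a written proof, as the Neumann-type analogue of Lemma \ref{lemma:tu:series}, which is obtained by expanding the solution in the separated bipolar harmonics, using the expansions of $x_1,x_2$, imposing continuity and the $\tau$-form of the flux condition on $\p D$, and the boundary condition on $\p\Om$ via \eqnref{N:bipolar}; carrying this out gives, per mode, the relations $\widetilde{A}_n+\tau\widetilde{B}_n=-2\alpha(-1)^n\tau$ and $\widetilde{B}_n=\widetilde{A}_n e^{-2n(\xi_i-\xi_e)}$, hence the stated coefficients. Your preferred alternative, extracting the coefficients from the reflection series of Lemma \ref{lemma:density:series} by letting each reflection shift $\xi$-modes via \eqnref{reflection:bipolar} and summing the geometric series with ratio $-\tau e^{-2n(\xi_i-\xi_e)}$, is a genuinely different (and also valid) presentation: it buys a conceptual explanation of why the denominator flips from $1-\tau e^{-2n(\xi_i-\xi_e)}$ to $1+\tau e^{-2n(\xi_i-\xi_e)}$ and reuses the convergence already justified in Remark \ref{remark:series:regularity} (absolute convergence lets you interchange the sum over reflections with the Fourier sum), at the cost of more bookkeeping with the composition order of $R_{\p\Om}$, $R_{\p D}$; the paper's mode-matching route is shorter because continuity across $\p D$ is built into the ansatz and only a two-by-two system per mode remains. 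One cosmetic caveat: within the mode-matching derivation the sign change is produced by the Neumann condition at $\xi=\xi_e$ (which forces $\widetilde{B}_n=+\widetilde{A}_n e^{-2n(\xi_i-\xi_e)}$, versus $B_n=-A_ne^{-2n(\xi_i-\xi_e)}$ in the Dirichlet case), not directly by the alternation $(-\tau)^{n+1}$; that interpretation belongs to the reflection-series route, so keep the two explanations attached to their respective arguments when you write it up.
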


The rest of the procedure for the Neumann-type solution $u$ is analogous to that for the Dirichlet-type solution $v$ in the previous sections. However, the results are completely different, being ``complementary'' to each other in some sense, which is the interesting part. Hence we skip the proof and directly state the results. 

\begin{definition}
\[q(\mathbf{x}(\xi,\theta);\beta):=\begin{cases}
\ds -L(e^{-\xi-2(\xi_i-\xi_e)-i\theta};\beta)-L(e^{\xi-2\xi_i-i\theta};\beta)&\mbox{in }\Om\setminus\overline{D}\\[2mm]
\ds -L(e^{-\xi-2(\xi_i-\xi_e)-i\theta};\beta)-L(e^{-\xi-i\theta};\beta)&\mbox{in }D
\end{cases}\]
\end{definition}

\begin{lemma}\label{lem:repofu:q}
Suppose $g\in C^{1,\delta}(\Om)$ for some $\delta>1/2$ and let $u$ solve \eqnref{e0}.
\begin{itemize}
\item[\rm(a)] For $1<k<\infty$, $\|\nabla {u}\|_{L^\infty(B_e)}$ is bounded independently of $\ep$ and $k$. 

\item[\rm(b)] For $0<k<1$, the solution ${u}$ to \eqnref{e0} satisfies
\beq
{u}(\Bx)= \frac{r_*^2\tau}{2} \big[C_1\Re\{ q(\Bx;\beta)\}+ C_2\Im\{q(\Bx;\beta)\}\big]+r(\Bx),
\eeq
where $\|\nabla r\|_{L^\infty(B_e)}$ is bounded independently of $\ep$ and $k$.
\end{itemize}
\end{lemma}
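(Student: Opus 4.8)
The plan is to follow the blueprint already established for the Dirichlet problem in Lemma~\ref{thm:singularLT}, now transplanted to the Neumann setting with $u$, $\widetilde F$, $\widetilde A_n$, $\widetilde B_n$ in place of $v$, $F$, $A_n$, $B_n$. First I would recall from Lemma~\ref{lemma:u:lseries} that, after the harmless linearization \eqnref{H:linear}, the solution splits as $u = C_1(x_1+\Re\widetilde F) + C_2(x_2+\Im\widetilde F)$, so everything reduces to controlling the single holomorphic series $\widetilde F$ and comparing its derivative to that of $q$. The coefficients are $\widetilde A_n = -2\alpha(-1)^n\tau/(1+\tau e^{-2n(\xi_i-\xi_e)})$ and $\widetilde B_n = \widetilde A_n e^{-2n(\xi_i-\xi_e)}$; writing $e^{-2n(\xi_i-\xi_e)}$ and expanding the geometric factor $1/(1+\tau e^{-2n(\xi_i-\xi_e)}) = \sum_{m\ge 0}(-\tau)^m e^{-2mn(\xi_i-\xi_e)}$ turns $\widetilde F$ into a double sum whose inner sum over $n$ is precisely of the form $\sum_n (-1)^n z^n = $ (a Lerch-type series), exactly matching the structure that produced $q_d$ in the Dirichlet case. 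Collecting the two pieces of $\widetilde F$ (the $e^{n(\xi-i\theta-2\xi_i)}$ term on $\xi_e<\xi<\xi_i$, the $e^{n(-\xi-i\theta)}$ term, and the combined term on $\xi>\xi_i$) reproduces the two $L(\cdot;\beta)$ contributions appearing in the definition of $q(\Bx;\beta)$.

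The key difference, and the source of the "complementary" phenomenon advertised in the introduction and Table~\ref{tab:CBUR}, is the sign in the denominator: $1+\tau e^{-2n(\xi_i-\xi_e)}$ rather than $1-\tau e^{-2n(\xi_i-\xi_e)}$. This is equivalent to replacing $\tau$ by $-\tau$ in the Dirichlet formulas, i.e.\ the Lerch parameter $\beta = \tfrac{r_*(-\ln|\tau|)}{4\sqrt\ep}$ stays the same (it depends only on $|\tau|$), but when $k>1$ one now has $\tau>0$ in the "$1+\tau(\cdots)$" denominator, which is the \emph{tame} case where the series sum stays bounded — corresponding to \eqnref{ineq:simple:kl1} of Lemma~\ref{lem:approxP} with the alternating sign — giving part~(a): for $1<k<\infty$, $\|\nabla u\|_{L^\infty(B_e)} = O(1)$. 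Conversely, when $0<k<1$ we have $\tau<0$, so $1+\tau e^{-2n(\xi_i-\xi_e)} = 1 - |\tau| e^{-2n(\xi_i-\xi_e)}$ is the resonant denominator, and the sum is governed by \eqnref{ineq:simple:kg1}, producing the genuine blow-up term $\tfrac{r_*^2\tau}{2}[C_1\Re q + C_2\Im q]$ of part~(b). So concretely I would: (i) insert the geometric expansion into $\widetilde A_n,\widetilde B_n$; (ii) interchange the order of summation (justified by $|\tau|<1$ and the exponential decay $e^{-2n(\xi_i-\xi_e)}\le 1$, just as in the convergence argument of Lemma~\ref{lemma:density:series}); (iii) recognize the inner sums over $n$ as $\tfrac{\partial}{\partial z}$-type or $L$-type series and apply Lemma~\ref{lem:approxP} (with $\xi_0 = 2(\xi_i-\xi_e)$ or the appropriate shift, and with $\pm|\tau|$ according to the sign of $\tau$) to replace each truncated sum by $L(\cdot;\beta)$ or $P(\cdot;\beta)$ up to an error controlled by $\tfrac{C}{\cosh(\cdot)+\cos\theta} = \tfrac{C}{\alpha h}$; (iv) multiply back by the $C_1,C_2$ and the overall $-2\alpha(-1)^n\tau$ normalization, track the $\alpha = r_*\sqrt\ep + O(\ep\sqrt\ep)$ asymptotics so that $2\alpha \cdot \tfrac{1}{?} $ collapses to the stated $\tfrac{r_*^2\tau}{2}$ prefactor, and collect the leftover into $r$; (v) finally estimate $\|\nabla r\|_{L^\infty(B_e)}$ using \eqnref{N:bipolar}, \eqnref{ineq:hfrac} and Lemma~\ref{coshsum} to show the accumulated error terms, after being multiplied by $h(\xi,\theta)$ from the normal derivative, remain $O(1)$ uniformly in $(\ep,k)$ — this is where the bound $\sum_m h(\xi_e,\theta)/h(\xi_{e,m+1},\theta)\le C/|\Bx(\xi_e,\theta)|$ for $|\theta|>\pi/2$ is indispensable, and for $|\theta|\le\pi/2$ one uses that $h$ is bounded below so the geometric decay in $m$ alone suffices.

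The main obstacle I expect is bookkeeping the exact correspondence between the two representations of $q$ on $\Om\setminus\overline D$ versus $D$ and making sure the error term $r$ has a gradient bound that is uniform not just in $\ep$ but also in $k$ as $k\to 0$ (equivalently $\tau\to -1$): near that limit the denominator $1-|\tau|e^{-2n(\xi_i-\xi_e)}$ degenerates for small $n$, so the naive termwise bound $|\widetilde A_n|\lesssim \alpha/(1-|\tau|)$ blows up and must instead be absorbed into the $P(\cdot;\beta)$ singular term, leaving only a genuinely $O(1)$ remainder. This is handled exactly as in \cite[Lemma 3.6 / Section 5]{imageCharge2018}: the point is that Lemma~\ref{lem:approxP} is stated with the $\beta$-dependence built in precisely so that the $(1-|\tau|)^{-1}$ growth is captured by $L$ and $P$ and does not leak into the error. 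Since the paper has already carried out the analogous argument in full for $v$ (Lemmas~\ref{thm:singularLT},~\ref{theo:grad} and Theorem~\ref{thm:imgCharge}), the honest statement is that the proof is "the same, mutatis mutandis, with $\tau\mapsto-\tau$," and I would present it by indicating these substitutions and pointing to the appendix for the residual estimates, rather than reproducing the entire computation.
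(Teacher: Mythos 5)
Your proposal is correct and follows essentially the same route as the paper: the paper deliberately omits this proof, noting it is the analogue of Lemma \ref{thm:singularLT} (proved in the appendix), and your argument reproduces exactly that template — geometric expansion of $\widetilde{A}_n,\widetilde{B}_n$, interchange of sums, Lemma \ref{lem:approxP} with the sign of $\tau$ effectively reversed by the $1+\tau e^{-2n(\xi_i-\xi_e)}$ denominator (so $k>1$ is the tame alternating case and $0<k<1$ the resonant one), and the $2\alpha\tau/\xi_0\to r_*^2\tau/2$ prefactor with $O(1/h)$ errors absorbed into $r$. The only cosmetic deviation is invoking Lemma \ref{coshsum} for the remainder estimate, which is unnecessary since the pointwise error bound in Lemma \ref{lem:approxP} already yields $O(1)$ after multiplication by $h$, but this does not affect correctness.
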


\begin{theorem}\label{theo:uint:grad} Suppose $g\in C^{1,\delta}(\Om)$ for some $\delta>1/2$. Then, $\nabla u$ blows up only in the limits $\ep\to 0$ and $\tau\to1$, i.e., $0<k\ll1$, whence $\nabla u$ can be approximated as
\beq\begin{aligned}&\ds\nabla u(\mathbf{x}(\xi,\theta))\cdot\mathbf{e}_\xi\\[2mm]
&=\begin{cases}\ds-\frac{r_*\tau}{\sqrt{\ep}}(\cosh\xi+\cos\theta)\left[C_1\Re\left\{P\left(e^{-(\xi+2\xi_i)-i\theta};\beta\right)\right\}+C_2\Im\left\{P\left(e^{-(\xi+2\xi_i)-i\theta};\beta\right)\right\}\right]+O(1)\\[2mm]\hfill\mbox{in }D,\\[2.5mm]
\ds O(1)\quad\mbox{in }\Om\backslash\overline{D},\end{cases}\end{aligned}\eeq
\beq\begin{aligned}&\ds\nabla u(\mathbf{x}(\xi,\theta))\cdot\mathbf{e}_\theta\\[2mm]
&=\begin{cases}\ds\frac{r_*\tau}{\sqrt{\ep}}(\cosh\xi+\cos\theta)\left[C_1\Im\left\{P\left(e^{-(\xi+2\xi_i)-i\theta};\beta\right)\right\}-C_2\Re\left\{P\left(e^{-(\xi+2\xi_i)-i\theta};\beta\right)\right\}\right]+O(1)\quad\mbox{in }D,\\[2.5mm]
\ds\frac{r_*\tau}{\sqrt{\ep}}(\cosh\xi+\cos\theta)\left[C_1\Im\left\{P\left(e^{-(2\xi_i-\xi)-i\theta};\beta\right)\right\}-C_2\Re\left\{P\left(e^{-(2\xi_i-\xi)-i\theta};\beta\right)\right\}\right]+O(1)\\[2mm]\hfill\mbox{in }\Om\backslash\overline{D}.
\end{cases}\end{aligned}\eeq
\end{theorem}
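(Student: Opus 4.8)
The plan is to mirror the argument for the Dirichlet-type solution $v$ carried out in Subsections 4.1--4.4, transferring each step to the Neumann-type solution $u$ via Lemma \ref{lemma:u:lseries} and Lemma \ref{lem:repofu:q}. First, by Lemma \ref{lemma:boundedness}, only the linear part of $H$ contributes to $\nabla u$ in the $O(1)$-approximation uniform in $(\ep,k)$, so we may assume $H$ is of the form \eqnref{H:linear}. Lemma \ref{lemma:u:lseries} then gives the exact series representation of $u$ in terms of $\widetilde{F}$, with coefficients $\widetilde{A}_n,\widetilde{B}_n$ carrying the factor $1+\tau e^{-2n(\xi_i-\xi_e)}$ in the denominator (in contrast to the $1-\tau e^{-2n(\xi_i-\xi_e)}$ for $v$); this sign is exactly what makes the blow-up condition $0<k\ll1$ (i.e. $\tau\to-1$) rather than $k\gg1$. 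The function $q$ is the Lerch-transcendent surrogate for $\widetilde{F}$, and Lemma \ref{lem:repofu:q} already packages the statement that $\nabla u = \frac{r_*^2\tau}{2}(C_1\Re\{\nabla q\}+C_2\Im\{\nabla q\}) + O(1)$ in $B_e$, uniformly in $(\ep,k)$, with boundedness for $0<k<1$ replaced by $1<k<\infty$. So the theorem reduces to computing $\nabla q\cdot\mathbf{e}_\xi$ and $\nabla q\cdot\mathbf{e}_\theta$ up to $O(1)$.

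Next I would differentiate $q$ term-by-term using \eqnref{N:bipolar}--\eqnref{T:bipolar} and the relation $P(z;\beta)=-z\,\partial_z L(z;\beta)$, so that each $\partial_\xi$ acting on $L(e^{\mp\xi-i\theta};\beta)$-type terms produces $\pm P$ and each $\partial_\theta$ produces $\pm iP$. Since $\mathbf{e}_\xi=\nabla\xi/|\nabla\xi|$ and $\mathbf{e}_\theta=\nabla\theta/|\nabla\theta|$ with $|\nabla\xi|=|\nabla\theta|=h(\xi,\theta)$, multiplying through by $h(\xi,\theta)=(\cosh\xi+\cos\theta)/\alpha$ and using $\alpha=r_*\sqrt{\ep}+O(\ep\sqrt{\ep})$ produces the prefactor $\frac{r_*\tau}{\sqrt{\ep}}(\cosh\xi+\cos\theta)$ after combining with the $\frac{r_*^2\tau}{2}$ from Lemma \ref{lem:repofu:q}; one must also confirm that replacing $\alpha$ by $r_*\sqrt{\ep}$ and $\xi_i-\xi_e\approx\xi_i$ incurs only $O(1)$ errors, which follows from the Lipschitz-in-$s$ estimate \eqnref{e10} exactly as in the proof of Theorem \ref{theo:grad}. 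In $\Om\setminus\overline{D}$ the two $L$-arguments in $q$ are $e^{-\xi-2(\xi_i-\xi_e)-i\theta}$ and $e^{\xi-2\xi_i-i\theta}$, both with modulus $e^{-(2\xi_i-\xi)}$ up to $O(\ep)$ corrections, so their $\mathbf{e}_\xi$-derivatives cancel to $O(1)$ — giving the $O(1)$ entry for $\nabla u\cdot\mathbf{e}_\xi$ in $\Om\setminus\overline{D}$ — while in $D$ the arguments are $e^{-\xi-2(\xi_i-\xi_e)-i\theta}$ and $e^{-\xi-i\theta}$, and only the latter survives the differentiation with a non-negligible coefficient, yielding the $P(e^{-(\xi+2\xi_i)-i\theta};\beta)$ singular term with the stated sign. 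The $\mathbf{e}_\theta$-components are read off the same way, noting that the imaginary unit from $\partial_\theta$ rotates $\Re,\Im$ into the combinations $C_1\Im\{P\}-C_2\Re\{P\}$; here the two $L$-terms do \emph{not} cancel in $\Om\setminus\overline{D}$ (the $i\partial_\theta$ derivative has the same sign for both), which is why $\nabla u\cdot\mathbf{e}_\theta$ has a genuine singular contribution $P(e^{-(2\xi_i-\xi)-i\theta};\beta)$ in $\Om\setminus\overline{D}$ as well as the $P(e^{-(\xi+2\xi_i)-i\theta};\beta)$ term in $D$.

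Finally, to establish that the blow-up really occurs \emph{only} in the limits $\ep\to0$ and $\tau\to1$ — here meaning $\tau\to -1$ via $0<k\ll1$, so $-\ln|\tau|\to0$ and hence $\beta=\frac{r_*(-\ln|\tau|)}{4\sqrt{\ep}}\to0$ — I would invoke the bound \eqnref{e9}: $|h(\xi,\theta)P(e^{-(\xi+2\xi_i)-i\theta};\beta)|\le\frac{1}{2\alpha\beta}\cdot\frac{h(\xi,\theta)}{h(\xi+2\xi_i,\theta)}\le\frac{C}{|\ln\tau|}$, so the prefactor $\frac{r_*\tau}{\sqrt{\ep}}$ times this is $O(1/|\ln\tau|)$, bounded unless $\tau\to-1$; combined with part (a) of Lemma \ref{lem:repofu:q}, which already gives boundedness for $k>1$, this pins down the blow-up regime. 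The main obstacle I anticipate is the bookkeeping in the $\mathbf{e}_\theta$-component: one must track carefully which of the two Lerch terms cancel and which reinforce under $\partial_\xi$ versus $\partial_\theta$, since the answer differs between $D$ and $\Om\setminus\overline{D}$ and between the $\xi$- and $\theta$-directions, and a sign error there would spoil the ``complementary'' structure advertised in Table \ref{tab:CBUR}. All of this is routine given the machinery already developed for $v$; the only genuinely new input is verifying that the sign flip $1-\tau e^{-2n(\xi_i-\xi_e)}\rightsquigarrow 1+\tau e^{-2n(\xi_i-\xi_e)}$ in the coefficients propagates correctly through the integral approximation of the series, which is handled by the same estimate \eqnref{ineq:simple:kl1} of Lemma \ref{lem:approxP} (the alternating-sign case) that controls the error when $k<1$.
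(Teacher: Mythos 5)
Your overall strategy is exactly the one the paper intends (the paper omits this proof, stating it is the analogue for $u$ of Lemma \ref{thm:singularLT} and Theorem \ref{theo:grad}): reduce to a linear $H$ via Lemma \ref{lemma:boundedness}, pass to the series of Lemma \ref{lemma:u:lseries}, observe that the sign flip $1-\tau e^{-2n(\xi_i-\xi_e)}\to 1+\tau e^{-2n(\xi_i-\xi_e)}$ turns the power series in $\tau$ into one in $-\tau$, so that Lemma \ref{lem:approxP} yields the $P$-approximation when $0<k<1$ and boundedness when $k>1$, and then differentiate $q$ and control the main and error terms with \eqnref{e9}--\eqnref{e10}. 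Your identification of which pairs cancel and which reinforce in $\Om\setminus\overline{D}$ (cancellation for $\mathbf{e}_\xi$, reinforcement for $\mathbf{e}_\theta$) is correct, as is the use of \eqnref{e9} together with Lemma \ref{lem:repofu:q}(a) to pin the blow-up to $\ep\to0$, $|\tau|\to1$ with $0<k\ll1$.

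There is, however, a concrete error in your treatment of the region $D$, and it affects the constant in the asymptotic formula. In $D$ one has $q=-L(e^{-\xi-2(\xi_i-\xi_e)-i\theta};\beta)-L(e^{-\xi-i\theta};\beta)$, and under $\partial_\xi$ (and likewise $\partial_\theta$) \emph{both} terms produce a $-P$ (resp.\ $-iP$) whose arguments have exponents differing only by $2(\xi_i-\xi_e)=O(\sqrt{\ep})$; by \eqnref{e10} these agree up to $O(1/h(\xi,\theta))$, so the two contributions reinforce and give $-2P\left(e^{-(\xi+2\xi_i)-i\theta};\beta\right)+O(1/h)$ — it is not the case that ``only the latter survives.'' That factor $2$ is precisely what cancels the $\frac12$ in the prefactor $\frac{r_*^2\tau}{2}$ of Lemma \ref{lem:repofu:q} and, after multiplying by $h(\xi,\theta)=(\cosh\xi+\cos\theta)/\alpha$ with $\alpha=r_*\sqrt{\ep}+O(\ep\sqrt{\ep})$, produces the stated coefficient $\frac{r_*\tau}{\sqrt{\ep}}(\cosh\xi+\cos\theta)$; taken literally, your bookkeeping would give half the main term in $D$ for both the $\mathbf{e}_\xi$- and $\mathbf{e}_\theta$-components. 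Two smaller slips: in $\Om\setminus\overline{D}$ the two Lerch arguments agree only up to $O(\sqrt{\ep})$ in the exponent (not $O(\ep)$) — harmless, since the cancellation you need comes from \eqnref{e10} anyway — and in your closing sentence the roles of the two estimates are reversed for $u$: \eqnref{ineq:simple:kg1} furnishes the singular $P$-approximation when $0<k<1$, while \eqnref{ineq:simple:kl1} (the alternating case) gives boundedness when $k>1$, as you in fact state correctly earlier in the proposal.
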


\begin{theorem}[Image charges for the gradient blow-up term]\label{thm:imgCharge:u} 
Suppose that $g\in C^{1,\delta}(\Om)$ for some $\delta>1/2$. Then, the following hold.
\begin{itemize}
\item[\rm(a)] The only cases that the gradient of the solution $u$ to \eqnref{e0} blows up are the following:
\begin{itemize}
\item[(i)] $\|\nabla u\cdot\mathbf{e}_\theta\|_{L^\infty(\Om\backslash\overline{D})}$ blows up only if $0<k,\ep\ll1$.
\item[(ii)] $\|\nabla u\cdot\mathbf{e}_\xi\|_{L^\infty({D})}$ and $\|\nabla u\cdot\mathbf{e}_\theta\|_{L^\infty({D})}$ blow up only if $0<k,\ep\ll1$.
\end{itemize}

\item[\rm(b)]
The solution $u$ to \eqnref{e0} satisfies the asymptotic formula
\beq\label{u:aymp}
u(\Bx)=u_*(\Bx)+r(\Bx)\quad\mbox{in }\Om,
\eeq
where $\|\nabla r\|_{L^\infty(\Om)}$ is bounded independently of $(\ep,k)$ and 

$${u_*}(\Bx)=r_*^2\tau\left[C_1\int_{-c_i}^{-\alpha}\ln|\Bx-\Bs|\varphi_-(s)ds+C_2\int_{-c_i}^{-\alpha}\left(\pd{}{x_2}\ln|\Bx-\Bs|\right)\psi_-(s)ds\right].$$

In addition, $u$ admits an alternative asymptotic formula
\[u(\Bx)=\widetilde{u}_*(\Bx)+\widetilde{r}(\Bx)\quad\mbox{only in }\Om\backslash\overline{D},\]
where $\|\nabla\widetilde{r}\cdot\mathbf{e}_\theta\|_{L^\infty(\Om\backslash\overline{D})}$ is bounded independently of $(\ep,k)$ and
$$\widetilde{u}_*(\Bx)=-r_*^2\tau\left[C_1\int_{\alpha}^{c_i}\ln|\Bx-\Bs|\varphi_+(s)ds+C_2\int_{\alpha}^{c_i}\left(\pd{}{x_2}\ln|\Bx-\Bs|\right)\psi_+(s)ds\right].$$
\end{itemize}
\end{theorem}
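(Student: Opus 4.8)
The plan is to reduce, exactly as for the Dirichlet-type solution, to three facts already in hand: Lemma~\ref{lem:repofu:q}, which for $0<k<1$ gives $u=\tfrac{r_*^2\tau}{2}\bigl[C_1\Re\{q(\Bx;\beta)\}+C_2\Im\{q(\Bx;\beta)\}\bigr]+r$ with $\|\nabla r\|_{L^\infty}$ bounded uniformly in $(\ep,k)$, and for $1<k<\infty$ forces the whole gradient to be $O(1)$; Theorem~\ref{theo:uint:grad}, which identifies which component of $\nabla u$ survives in $D$ and in $\Om\setminus\overline{D}$ and expresses the surviving part through $P(e^{-(\xi+2\xi_i)-i\theta};\beta)$ resp.\ $P(e^{-(2\xi_i-\xi)-i\theta};\beta)$; and Lemma~\ref{lem:LTandIC}, which rewrites a single Lerch transcendent as a line-charge potential supported on $[\alpha,c_i]\subset\overline{D}$ or on $[-c_i,-\alpha]\subset\RR^2\setminus\overline{\Om}$. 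Throughout one may replace $H$ by its linear part $C_1x_1+C_2x_2$ as in~\eqnref{H:linear} (legitimate by Lemma~\ref{lemma:boundedness}), and one will use repeatedly that $\alpha=r_*\sqrt\ep+O(\ep^{3/2})$ and that $\xi_e,\xi_i=O(\sqrt\ep)$, so that $\alpha\beta=\tfrac{r_*^2}{4}(-\ln|\tau|)(1+O(\ep))$ and any shift of an exponent by $O(\xi_e)$ or $O(\xi_i)$ changes the corresponding gradient by only $O(1)$ via the Lipschitz bound~\eqnref{e10}.

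\textbf{Part (a).} By Lemma~\ref{lem:repofu:q}(a) no blow-up occurs for $k>1$, so one may assume $0<k<1$. Differentiating $q$ term by term, on $\{\xi_e<\xi<\xi_i\}$ the $\mathbf{e}_\xi$-derivatives of its two Lerch summands have opposite sign and, up to an $O(\sqrt\ep)$ exponent shift, equal modulus (their exponents are related by the reflection $\xi\mapsto2\xi_i-\xi$), so by~\eqnref{e10} they cancel to give $\nabla u\cdot\mathbf{e}_\xi=O(1)$ in $\Om\setminus\overline{D}$, whereas on $\{\xi>\xi_i\}$ the two derivatives reinforce. Hence only $\nabla u\cdot\mathbf{e}_\theta$ can diverge in $\Om\setminus\overline{D}$, and both components can diverge in $D$. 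For the surviving contributions, the prefactor $\tfrac{r_*\tau}{\sqrt\ep}(\cosh\xi+\cos\theta)P(\cdot;\beta)$ is controlled, via~\eqnref{e9}, \eqnref{ineq:hfrac} and $\alpha\beta=\tfrac{r_*^2}{4}(-\ln|\tau|)(1+O(\ep))$, by a constant multiple of $\tfrac{|\tau|}{-\ln|\tau|}$; since this remains bounded whenever $|\tau|=\tfrac{1-k}{1+k}$ is bounded away from $1$, divergence forces $k\to0$, and a genuine (not merely formally unbounded) divergence additionally requires $\ep\to0$, so that $\beta$ is not driven to $+\infty$ and $P$ is not killed. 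This yields precisely cases (i)--(ii).

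\textbf{Part (b).} I would apply Lemma~\ref{lem:LTandIC} term by term to the representation of Lemma~\ref{lem:repofu:q}. In $\Om$, the summand $-L(e^{-\xi-2(\xi_i-\xi_e)-i\theta};\beta)$ of $q$ differs from $-L(e^{-(\xi+2\xi_i)-i\theta};\beta)$ only through the exponent shift $2\xi_e=O(\sqrt\ep)$, hence (by~\eqnref{e10}) to within a gradient-bounded error; the second identity of Lemma~\ref{lem:LTandIC} then turns it, for all $\Bx\in\Om$, into $\int_{-c_i}^{-\alpha}\ln|\Bx-\Bs|\varphi_-(s)\,ds+i\int_{-c_i}^{-\alpha}\bigl(\pd{}{x_2}\ln|\Bx-\Bs|\bigr)\psi_-(s)\,ds$ up to a remainder of gradient $\le1/r_i$. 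In $D$ the remaining summand $-L(e^{-\xi-i\theta};\beta)$ differs from $-L(e^{-(\xi+2\xi_i)-i\theta};\beta)$ only through the shift $2\xi_i=O(\sqrt\ep)$, so it contributes the \emph{same} charge; multiplying by $\tfrac{r_*^2\tau}{2}$ (the factor $2$ from the two matching summands producing the scalar $r_*^2\tau$ of the statement) and absorbing the linear part of $H$ and all gradient-bounded pieces into $r$ would give $u=u_*+r$ in $\Om$. Since $\varphi_-,\psi_-$ are supported on $[-c_i,-\alpha]\subset\RR^2\setminus\overline{\Om}$, the function $u_*$ is actually harmonic throughout $\Om$, consistent with $\|\nabla r\|_{L^\infty(\Om)}$ being bounded. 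For the alternative formula, in $\Om\setminus\overline{D}$ the dominant summand (by Theorem~\ref{theo:uint:grad}) is $-L(e^{-(2\xi_i-\xi)-i\theta};\beta)$, which the first identity of Lemma~\ref{lem:LTandIC} turns into minus the $[\alpha,c_i]$-charge potentials against $\varphi_+,\psi_+$; since that support lies inside $\overline{D}$, the resulting $\widetilde{u}_*$ is harmonic only in $\Om\setminus\overline{D}$, which is why the alternative formula is asserted only there, and since the other summand contributes only $O(1)$ to the $\mathbf{e}_\xi$-component, only $\|\mathbf{e}_\theta\cdot\nabla\widetilde{r}\|_{L^\infty(\Om\setminus\overline{D})}$ can be controlled.

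\textbf{Main difficulty.} The real work is the uniform-in-$(\ep,k)$ bookkeeping: one must verify that \emph{each} discarded ingredient --- the non-dominant Lerch summand in each subregion, every $O(\sqrt\ep)$ exponent shift, and the remainders of Lemma~\ref{lem:LTandIC} --- contributes $O(1)$ to the relevant gradient component \emph{simultaneously} for all small $\ep$ and all $0<k\ne1$, including the degenerate limits $k\to0$ and $k\to\infty$; this is exactly where~\eqnref{e9}--\eqnref{e10}, the monotonicity~\eqnref{ineq:hfrac}, the bound $|\tau|<1$, and the asymptotics $\alpha\sim r_*\sqrt\ep$, $\xi_e,\xi_i=O(\sqrt\ep)$ all enter, precisely as in the Dirichlet-type argument and in \cite{imageCharge2018}. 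The one genuinely new point --- the source of the ``complementary'' pattern in Table~\ref{tab:CBUR} --- is that for $u$ the two Lerch summands of $q$ cancel in $\Om\setminus\overline{D}$ and reinforce in $D$, the reverse of what occurs for $v$.
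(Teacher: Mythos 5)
Your overall scheme is exactly the one the paper intends (the paper skips this proof, declaring it analogous to the Dirichlet case): pass through Lemma~\ref{lem:repofu:q} and Theorem~\ref{theo:uint:grad}, convert the Lerch transcendents by Lemma~\ref{lem:LTandIC}, and control every discarded piece by \eqnref{e9}--\eqnref{e10} together with $\alpha\sim r_*\sqrt\ep$ and $\xi_e,\xi_i=O(\sqrt\ep)$. Your part (a), your treatment of the formula inside $D$, and your derivation of the alternative formula with $\widetilde u_*$ in $\Om\setminus\overline D$ are consistent with that route. (One loose point in (a): your explanation of why blow-up also requires $\ep\to0$ — ``so that $\beta$ is not driven to $+\infty$'' — is garbled; for fixed $\ep$ the exponents satisfy $s\ge\xi_e(\ep)>0$, so $|z|$ stays away from $1$, $P$ is bounded by an $\ep$-dependent constant and the prefactor $r_*|\tau|/\sqrt\ep$ is an $\ep$-dependent constant; the clean argument is the refined bound \eqnref{ineq:P:refined} as in the proof of Corollary~\ref{prop:optEst}, which yields the $O\bigl((k+\sqrt\ep)^{-1}\bigr)$ estimate directly.)

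There is, however, a genuine gap in your derivation of the first formula ``in $\Om$''. Your factor-of-two step (``the two matching summands producing the scalar $r_*^2\tau$'') is verified only in $D$, where both summands of $q$, namely $-L(e^{-\xi-2(\xi_i-\xi_e)-i\theta};\beta)$ and $-L(e^{-\xi-i\theta};\beta)$, are $O(\sqrt\ep)$-shifts of $-L(e^{-(\xi+2\xi_i)-i\theta};\beta)$ with the same sign of $\xi$ in the exponent, so \eqnref{e10} gives a gradient-bounded replacement. In $\Om\setminus\overline D$ the second summand is $-L(e^{\xi-2\xi_i-i\theta};\beta)$, whose $\xi$-derivative is $+P(e^{-(2\xi_i-\xi)-i\theta};\beta)$, opposite in sign to the $-P(e^{-(\xi+2\xi_i)-i\theta};\beta)$ coming from the summand you do convert --- this is precisely the cancellation you invoke in part (a). Hence replacing that summand by the $[-c_i,-\alpha]$ charge changes the $\mathbf{e}_\xi$-component of the gradient by roughly $r_*^2\tau\,h(\xi,\theta)P(e^{-(2\xi_i-\xi)-i\theta};\beta)$, which in the gap is of the full blow-up order $(k+\sqrt\ep)^{-1}$ (take $\theta=\pi/2$ and $\beta=O(1)$), not $O(1)$. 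What your argument actually delivers is $u=u_*+r$ with $\|\nabla r\|_{L^\infty(D)}$ and $\|\nabla r\cdot\mathbf{e}_\theta\|_{L^\infty(\Om\setminus\overline D)}$ bounded; the literal claim $\|\nabla r\|_{L^\infty(\Om)}\le C$ cannot come out of this scheme, and indeed it would force $\nabla u_*\cdot\mathbf{e}_\xi=O(1)$ in $\Om\setminus\overline D$, which fails: up to a gradient-bounded term $u_*$ is the potential of a unit charge essentially concentrated near $(-\alpha,0)$, whose field at gap points at distance of order $\alpha$ has $\mathbf{e}_\xi$-component of order $r_*^2|\tau|/\alpha$, while $\nabla u\cdot\mathbf{e}_\xi=O(1)$ there by Theorem~\ref{theo:uint:grad} and Table~\ref{tab:CBUR}. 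So either restrict the full-gradient remainder estimate to $D$ and state the gap-region estimate for the $\mathbf{e}_\theta$-component only (mirroring the hedged formulation of Theorem~\ref{thm:imgCharge}(b)), or explicitly flag that the stated bound on $\|\nabla r\|_{L^\infty(\Om)}$ is in tension with Theorem~\ref{theo:uint:grad}; as written, your proposal silently asserts the stronger claim at exactly the step where it breaks down.
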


Remark that Lemma \ref{lem:approxP} allows the approximation by $\varphi_+$ and $\psi_+$ only in $\Om\setminus\overline{D}$, which is natural from the notion of image charge. The optimal estimates for $\nabla u$ can be derived in the same way; see Table \ref{tab:CBUR}. We end this article by plotting the asymptotic formulas to observe the blow-up phenomena.

\begin{figure}[h!]
\centering
\includegraphics[width=\textwidth]{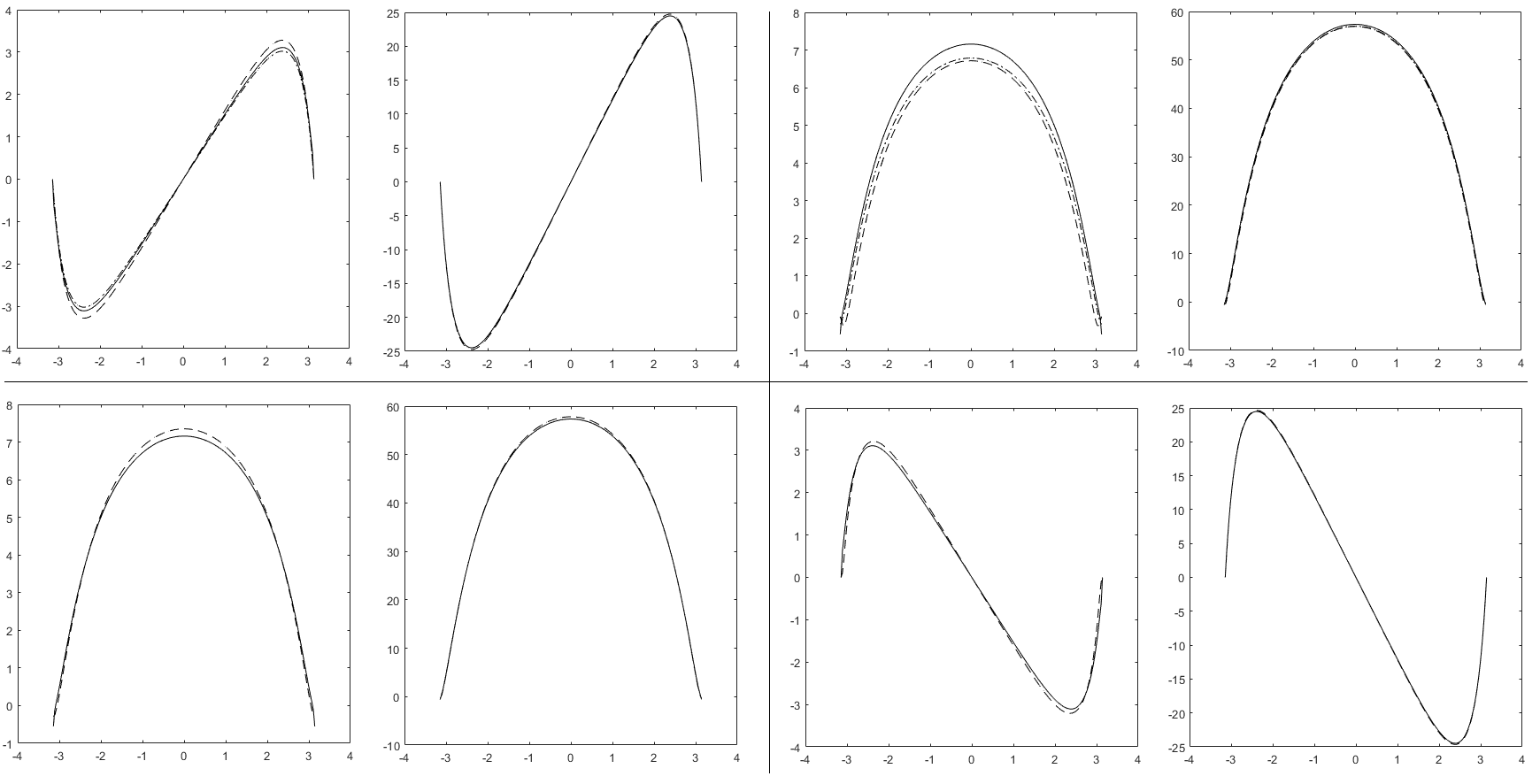}
\caption{\label{Neumann_blowups}We set $r_e=5$, $r_i=2$ and $k^2/\ep=2$ to be constant, but $\ep=1/(50*64^{n})$, where $n=1$ (left) and $n=2$ (right) in each quadrant. The four figures above plot $\nabla u\cdot\mathbf{e}_\theta$ (filled curves,) $\nabla u_*\cdot\mathbf{e}_\theta$ (dashed curves) and $\nabla \widetilde{u}_*\cdot\mathbf{e}_\theta$ (dashed and dotted curves) as in Figure \ref{Dir_blowups}. Note that the gradients in the direction $\mathbf{e}_\theta$ is continuous across $\p D$. The four figures below plot the limit of $\nabla u\cdot\mathbf{e}_\xi$ (filled curves) and $\nabla u_*\cdot\mathbf{e}_\xi$ (dashed curves) from inside. The four figures on the left assume $g(t)=\cos t$ and the others assume $g(t)=\sin t$, where $t$ parametrizes $\p D$ by $c_i+r_ie^{it}$; one can easily check that $(C_1,C_2)=(1/r_e,0)$ and $(C_1,C_2)=(0,1/r_e)$ for $g(t)=\cos t$ and $g(t)=\sin t$, respectively. The result is consistent with Theorem \ref{thm:imgCharge:u} and Table \ref{tab:CBUR}}
\end{figure}

\section{Conclusion}

In this article, the interface problems with a heterogeneity near the boundary were classified as displayed in Table \ref{tab:CBUR} by the asymptotic analysis. Moreover, we found simple asymptotic formulas for both of Dirichlet-type and Neumann-type solutions. The formulas are valid for any boundary conditions satisfying $g\in C^{1,\delta}$ and $g_d\in C^{2,\delta}$ with some $\delta>1/2$. Remark that the approximation for the gradients assumes the error in $O(1)$ as $\ep$ and $k$ vary. It turned out that the Neumann-type and the Dirichlet-type solutions differ significantly in the blow-up feature.

More precisely, we expressed the blow-up term by a linear combination of the two functions, which are independent of the arbitrarily given boundary conditions. The functions are electric potentials generated by the charge densities $\varphi_-$ and $\psi_-$ supported on $[-c_i,-\alpha]$. Only in the intermediate region $\Om\setminus\overline{D}$, which is disjoint from $[-c_i,-\alpha]$, the other two alternative potentials generated by $\varphi_+$ and $\psi_+$ can approximate the gradient blow-up; the potentials are described for several choice of parameters in Figure \ref{Dir_blowups} and Figure \ref{Neumann_blowups}.

We end this article by mentioning some of the possible directions for further analyses of stress concentration in composites. One may take Poisson equations, Lam{\'e} systems, gradient blow-up in perturbed shapes other than disks, etc. into consideration. The results from their mathematical analysis are expected to bring total comprehension of the structure of the solution at once, which can hardly be expected from general numerical tools.

	%

%
\begin{appendix}
\section{Proof of Lemma \ref{lemma:boundedness}}\label{proof:boundedness}
\begin{proof}
Let $\xi_{i,m}$ and $\xi_{e,m+1}$ be defined as in \eqnref{def:xik}. Within this proof, $C$ will denote a constant independent of $\ep$, $\theta$ and $m$. It is sufficient to show that
\beq\label{IandII}\sum_{m=0} ^{\infty} \frac{h(\xi_i,\theta)}{h(\xi_{i,m},\theta)} \big| \nabla \widetilde{H}(\xi_{i,m},\theta)\big|\leq C\eeq
and
\beq\label{IandII2}
\sum_{m=0} ^{\infty} \frac{h(\xi_i,\theta)}{h(\xi_{e,m+1},\theta)} \left| \nabla \widetilde{H}(\xi_{e,m+1},\theta)\right|\leq C.
\eeq 
For notational simplicity we decompose the series in \eqnref{IandII} as

$$\sum_{m=0} ^{\infty} \frac{h(\xi_i,\theta)}{h(\xi_{i,m},\theta)} \big| \nabla \widetilde{H}(\xi_{i,m},\theta)\big| = \sum_{m \le \frac{r_*}{\sqrt{\ep}}} + \sum_{m >\frac{r_*}{\sqrt{\ep}}}:= ~ I + II$$
and separately prove the uniform boundedness.

For $|\theta|\leq\frac{\pi}{2}$, we have from \eqnref{ineq:lem33} and \eqnref{eq:Htild:bigO} that

$$I\leq C\frac{r_*}{\sqrt{\ep}}\Big(\big|\Bx(\xi_{i,m},\theta)\big|+\ep\Big)\leq C\frac{r_*}{\sqrt{\ep}}(\alpha+\ep)=O(1).$$
For $|\theta|>\frac{\pi}{2}$, we deduce from \eqnref{ineq:lem33} that

$$I\leq \frac{C}{|\Bx(\xi_i,\theta)|}\max\left\{\big|\Bx(\xi_{i,m},\theta)\big|:m \le \frac{r_*}{\sqrt{\ep}}\right\}=O(1).$$
Next, we estimate $II$. For $m>\frac{r_*}{\sqrt{\ep}}$, we have $$\xi_{i,m}=\frac{2m}{r_*}\sqrt{\ep}+O(\sqrt{\ep})>1$$ for small $\ep$. Thus, $$\big|\Bx(\xi_{i,m},\theta)\big| < C\alpha$$
and
$$\frac{h(\xi_i,\theta)}{h(\xi_{i,m},\theta)}=\frac{\cosh\xi_i+\cos\theta}{\cosh \xi_{i,m}+\cos\theta}\leq 4e^{-2m(\xi_i-\xi_e)}.$$
Therefore, it follows that
$$
II\leq \sum_{m>\frac{r_*}{\sqrt{\ep}}}4e^{-2m(\xi_i-\xi_e)}C\Big(\big|\Bx(\xi_{i,m},\theta)\big|+\ep\Big)\leq \frac{C(\alpha+\ep)}{\xi_i-\xi_e}= O(1).
$$

One can show \eqnref{IandII2} in the same way.
\end{proof}

\section{Proof of Lemma \ref{thm:singularLT}}
\begin{proof}
The proof is analogous to \cite[Lemma 5.1]{LY:2015:ASCE}. By differentiating Eq. (\ref{e5}), one gets
\[\frac{\partial F}{\partial \xi}=\begin{cases}
\ds\sum_{n=1}^\infty n\left(A_ne^{n(\xi-i\theta-2\xi_i)}-B_ne^{n(-\xi-i\theta)}\right)&\mbox{for }\xi_e<\xi<\xi_i,\\[2mm]
\ds-\sum_{n=1}^\infty n\left(A_ne^{n(-\xi-i\theta)}+B_ne^{n(-\xi-i\theta)}\right)&\mbox{for }\xi>\xi_i,
\end{cases}\]
and
\[\frac{\partial F}{\partial\theta}=\begin{cases}
\ds\sum_{n=1}^\infty(-ni)\left(A_ne^{n(\xi-i\theta-2\xi_i)}+B_ne^{n(-\xi-i\theta)}\right)&\mbox{for }\xi_e<\xi<\xi_i,\\[2mm]
\ds\sum_{n=1}^\infty(-ni)\left(A_ne^{n(-\xi-i\theta)}+B_ne^{n(-\xi-i\theta)}\right)&\mbox{for }\xi>\xi_i.
\end{cases}\]
We expand $A_n$ and $B_n$ defined by \eqnref{def:AnBn} in geometric series:
\[B_n=2\alpha(-1)^n\sum_{m=1}^\infty\tau^me^{mn(-2\xi_i+2\xi_e)}\quad\mbox{and}\quad A_n=-B_ne^{-n(-2\xi_i+2\xi_e)}.\]
Finally, from the relation
\[|z|<1,\ z\in\mathbb{C} \implies\frac{z}{(1-z)^2}=\sum_{n=1}^\infty nz^n \]
we can reduce the double summation into a single power series in $\tau$.

If $-1<\tau<0$ (i.e. $0<k<1$,) we conclude $\nabla F=O(1)$ from \eqnref{ineq:simple:kl1} in Lemma \ref{lem:approxP}. On the other hand, when $0<\tau<1$ (i.e. $k>1$,) \eqnref{ineq:simple:kg1} in Lemma \ref{lem:approxP} gives
\begin{equation}\label{e7}\frac{\partial F}{\partial \xi}=O\left(\frac{1}{h(\xi,\theta)}\right)+\frac{r_*^2\tau}{2}\begin{cases}
\ds\left[P\left(e^{-(2\xi_i-\xi)-i\theta};\beta\right)+P\left(e^{-(2\xi_i-2\xi_e+\xi)-i\theta};\beta\right)\right]&\mbox{for }\xi_e<\xi<\xi_i,\\[2mm]
\ds\left[-P\left(e^{-\xi-i\theta};\beta\right)+P\left(e^{-(2\xi_i-2\xi_e+\xi)-i\theta};\beta\right)\right]&\mbox{for }\xi>\xi_i,
\end{cases}\end{equation}
and
\begin{equation}\label{e8}\frac{\partial F}{\partial\theta}=O\left(\frac{1}{h(\xi,\theta)}\right)-\frac{r_*^2\tau}{2}\begin{cases}
\ds\left[iP\left(e^{-(2\xi_i-\xi)-i\theta};\beta\right)-iP\left(e^{-(2\xi_i-2\xi_e+\xi)-i\theta};\beta\right)\right]&\mbox{for }\xi_e<\xi<\xi_i,\\[2mm]
\ds\left[iP\left(e^{-\xi-i\theta};\beta\right)-iP\left(e^{-(2\xi_i-2\xi_e+\xi)-i\theta};\beta\right)\right]&\mbox{for }\xi>\xi_i.
\end{cases}\end{equation}
Further approximation using \eqnref{e10} shows that $\|\nabla F\|_{L^\infty(B_i)}=O(1)$ and
\[\nabla F(\Bx)=\frac{r_*^2\tau}{2}\nabla q_d(\Bx;\beta)+O(1)\quad \mbox{for all }\Bx\in\Om\setminus\overline{D},\]
which completes the proof.
\end{proof}

\section{Proof of Corollary \ref{prop:optEst}}
\begin{proof}

To obtain the upper bound, a refined version of the inequality \eqnref{e9} is required. Let $s>0$. From the definition of $P$, we have
\begin{equation*}P\left(e^{-s-i\theta};\beta\right)=\int_0^\infty e^{-\beta t}\frac{1+\cosh(s+t)\cos\theta-i\sinh(s+t)\sin\theta}{2(\cosh(s+t)+\cos\theta)^2}d t,\end{equation*}
from which it directly follows that
\beq\label{ineq:P:refined}\left|P\left(e^{-s-i\theta};\beta\right)\right|\le\int_0^\infty \frac{e^{-\beta t}}{2(\cosh(s+t)+\cos\theta)}dt.\eeq
Suppose for the rest of the proof that $\frac{\sqrt{\ep}}{C}\le s\le C\sqrt{\ep}$. Note that for any $t>0$ and $-\pi<\theta\le\pi$,
\[\frac{\cosh s+\cos\theta}{\cosh(s+t)+\cos\theta}\le\frac{\cosh s+1}{\cosh(s+t)+1},\]
which implies
\[\begin{aligned}\ds h(s,\theta)\left|P\left(e^{-s-i\theta};\beta\right)\right|&\le\frac{1}{2\alpha}\int_0^\infty e^{-\beta t-t}\frac{\cosh s+1}{e^{-t}(\cosh(s+t)+1)}dt\\[2mm]
\ds &\le \frac{\cosh s+1}{\alpha e^s}\int_0^\infty e^{-(\beta+1)t}dt=\frac{\cosh s+1}{\alpha(\beta+1)e^{s}}=O\left(\frac{1}{\frac{1}{k}+\sqrt{\ep}}\right),\end{aligned}\]
where the last approximation follows from
\[\beta=\frac{r_*}{4\sqrt{\ep}}\left(\frac{2}{k-1}+O\left(2/({k-1})^2\right)\right)=\frac{r_*}{2k\sqrt{\ep}}+O\left(\frac{1}{k^2\sqrt{\ep}}\right).\]

Next, in order to obtain the optimality, let $\theta=\pi/2$ so that
\[P\left(e^{-s-i\pi/2};\beta\right)=\int_0^\infty e^{-\beta t}\frac{1-i\sinh(s+t)}{2\cosh^2(s+t)}dt.\]
An analogous argument with \cite{imageCharge2018} shows that
\[\Im\left\{P\left(e^{-s-i\pi/2};\beta\right)\right\}\ge\frac{\widetilde{C}}{(\beta+1)(\beta+3)}\quad\mbox{and}\quad\Re\left\{P\left(e^{-s-i\pi/2};\beta\right)\right\}\ge\frac{\widetilde{C}}{\beta+1},\]
which completes the proof.
\end{proof}

\end{appendix}


\end{document}